\newtheorem{theorem}{Theorem}[section]
\newtheorem{lemma}[theorem]{Lemma}
\newtheorem{fact}[theorem]{Fact}
\newtheorem{proposition}[theorem]{Proposition}
\newtheorem{corollary}[theorem]{Corollary}
\theoremstyle{definition}
\newtheorem{definition}[theorem]{Definition}
\newtheorem{claim}[theorem]{Claim}
\newtheorem{example}[theorem]{Example}
\newtheorem{question}[theorem]{Question}
\newtheorem{conjecture}[theorem]{Conjecture}
\newtheorem{remark}[theorem]{Remark}
\newcommand{\mC}{{\mathbb C}}
\newcommand{\mE}{{\mathbb E}}
\newcommand{\mF}{\mathbb F}
\newcommand{\mN}{\mathbb N}
\newcommand{\mQ}{\mathbb Q}
\newcommand{\mR}{{\mathbb R}}
\newcommand{\mA}{\mathbb A}
\newcommand{\mZ}{{\mathbb Z}}
\newcommand{\bA}{{\mathbf A}}
\newcommand{\bX}{{\mathbf X}}
\newcommand{\bU}{{\mathbf U}}
\newcommand{\bW}{{\mathbf W}}
\newcommand{\bV}{{\mathbf V}}
\newcommand{\bZ}{{\mathbf Z}}
\newcommand{\bY}{{\mathbf Y}}
\newcommand{\bF}{{\mathbf F}}
\newcommand{\bP}{{\mathbf P}}
\newcommand{\bl}{\lambda}
\newcommand{\ep}{\epsilon}
\newcommand{\kk}{\kappa}
\newcommand{\mcL}{\mathcal L}
\newcommand{\mcO}{\mathcal O}
\newcommand{\mcP}{\mathcal P}
\newcommand{\FF}{\mathbb{F}}
\newcommand{\bd}{\bar d}
\newcommand{\fp}{\mathfrak p}
\newcommand{\ti}{\tilde}
\newcommand{\emp}{\emptyset}
\newcommand{\sing}{\operatorname{sing}}
\newcommand{\codim}{\operatorname{codim}}
\newcommand{\ms}{\medskip}
\begin{document} 
\title[ Applications of Algebraic Combinatorics to Algebraic Geometry]  {Applications of Algebraic Combinatorics to Algebraic Geometry}

\author{David Kazhdan and Tamar Ziegler}
\dedicatory{Dedicated to the memory of Tonny Springer}

\thanks{The second author is supported by ERC grant ErgComNum 682150. }

\maketitle

\begin{abstract} We formulate a number of 
new results 
in Algebraic Geometry and outline their derivation 
from Theorem \ref{uniform} which belongs to
 Algebraic Combinatorics.
 \end{abstract}

 \section{Introduction} In this paper we present a derivation of a number of  results in  Algebraic Geometry from  a theorem (Theorem \ref{uniform}) in Algebraic Combinatorics and it extension   (Theorem \ref{Mainp}) to the case of  $p$-adic rings. In the first half of the article we  outline the derivations  of results in  Algebraic Geometry relying on results of  \cite{kz} and \cite{kz-uniform}. Afterwards we present a proof of 
an extension of Theorem \ref{uniform} to the $p$-adic realm and an application of this result to
Algebraic Geometry.

\subsection{Basic definitions}
We will denote finite fields by $k$ and general fields by $K$.
It will be  important to distinguish between an algebraic variety $\bX$ defined over a field  $K$ and the set $X:=\bX (K)$ of $K$-points of $\bX$. For a polynomial 
$P: V\to K$ we write $\bX _P:= P^{-1}(0)\subset \bV$ and 
$X _P:= P^{-1}(0)\subset V$. So $X _P = \bX _P (K)$.

In this paper we heavily use the notion of rank of polynomials introduced in a paper by  Schmidt \cite{S}, which characterizes complexity of polynomials.
\footnote{This notion of complexity was also  introduced later in the work of Ananyan and Hochster \cite{ah} where it is called {\em strength}.}.

\begin{definition}[Schmidt rank]
 Let $P$ be a polynomial of degree $d$ on a  $K$-vector space $V$. We define the {\em rank} $r_K(P)$ as  the minimal number $r$ such that $P$ can be written in the form $P=\sum _{i=1}^rQ_iR_i$, where $Q_i,R_i$ are $K$-polynomials on $V$ of degrees $<d$. 
\end{definition}

When there is no confusion we will omit the subscript $K$ from $r_K(P)$. 
\begin{example}\label{2e}
If $P:V\to K$ is a non-degenerate  quadratic form then
$\mathrm{dim}(V)/2\leq r(P)\leq 3\mathrm{dim}(V)/2$.
\end{example}

We show that  morphisms $\bP :\bV \to \bA ^c$ defined by polynomials 
of sufficiently high rank and degree less then characteristic of $K$
 possess  a number of nice properties. In particular we show that these morphisms are flat and that their fibers are complete intersections with rational singularities.  In the case of fields of small characteristic we have to replace
 the rank $r(P)$ by the  non-classical rank $r_{nc}(P)$ defined in \ref{nc-rank}.

For polynomials over finite fields there is also  a notion of an analytic rank.
\begin{definition}[Analytic rank]  Let $k=\mF_q$ and 
$P$ be a polynomial of degree $d$ on a  $k$-vector space $V$. Fix a non trivial additive character $\psi:k \to \mC^*$ and define the {\em analytic rank} $a_k(P):= -\log_q |q^{-\dim V}\sum _{v\in V} \psi(P(v))|$. 
\end{definition}

To simplify notations we assume throughout most of the introduction that  $V=V_1\times \dots \times V_d$ and that polynomials
$P:V\to K$ are multilinear. In this case the analytic rank  does not depend on a choice a non-trivial additive character. 

 Most of the results in this paper are based on 
the following Theorem relating rank and analytic rank for multilinear polynomials over finite fields (see \cite{bl, Mi}): 

\begin{fact}\label{biasrank}  There exist a function $\gamma _d(s)$, $\lim _{s\to \infty} \gamma _d(s)=\infty$, such that for any finite field  $k$, and degree $d$ multilinear polynomial $P$ we have  $ a_k(P) \ge \gamma_d(r_k(P)) $. 
\end{fact}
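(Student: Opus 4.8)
The plan is to reduce Fact~\ref{biasrank} to a bound on the \emph{partition rank} of $P$, and to prove that bound by induction on the degree~$d$. Recall that the partition rank $\mathrm{prk}(P)$ of a multilinear form $P$ on $V_1\times\dots\times V_d$ is the least $r$ for which $P=\sum_{i=1}^{r}A_iB_i$, where each pair $(A_i,B_i)$ comes from a partition of $\{1,\dots,d\}$ into two nonempty blocks, $A_i$ being multilinear in the variables of one block and $B_i$ in those of the other. Then each $A_i$ and $B_i$ has degree strictly between $0$ and $d$, so every partition-rank decomposition is a Schmidt decomposition and $r_k(P)\le\mathrm{prk}(P)$. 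Hence it is enough to find $\tilde\gamma_d$ with $\lim_{s\to\infty}\tilde\gamma_d(s)=\infty$ and $\tilde\gamma_d(\mathrm{prk}(P))\le a_k(P)$ for all multilinear $P$: taking the monotone minorant $s\mapsto\inf_{t\ge s}\tilde\gamma_d(t)$ and using $\mathrm{prk}(P)\ge r_k(P)$ then yields a valid $\gamma_d$ in the Fact. Writing $b(P):=q^{-\dim V}\sum_{v\in V}\psi(P(v))$ --- which is real and lies in $[0,1]$ for multilinear $P$, because summing the exponential sum over any single variable turns it into an indicator, so $a_k(P)=-\log_q b(P)$ --- the goal becomes: find $F_d$ so that $a_k(P)\le s$ implies $\mathrm{prk}(P)\le F_d(s)$.

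The case $d=1$ is vacuous, and $d=2$ is elementary: a bilinear form on $V_1\times V_2$ with matrix $M$ has $b(P)=q^{-\mathrm{rank}(M)}$ and $\mathrm{prk}(P)=\mathrm{rank}(M)$, so $F_2(s)=s$ (compare Example~\ref{2e}). For the inductive step $d\ge 3$ I would pass to the derivatives in the last slot: since $P$ is linear in $v_d$, for each $h\in V_d$ the form $P_h(v_1,\dots,v_{d-1}):=P(v_1,\dots,v_{d-1},h)$ is multilinear of degree $d-1$, and one readily checks the identity
\[
b(P)\;=\;\mathbb{E}_{h\in V_d}\,b(P_h),
\]
both sides being equal to $\Pr_{v_1,\dots,v_{d-1}}\bigl[v_d\mapsto P(v_1,\dots,v_d)\equiv 0\bigr]$. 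Thus $a_k(P)\le s$ forces $\mathbb{E}_h b(P_h)\ge q^{-s}$, and since $0\le b(P_h)\le 1$ a one-line averaging argument produces a set $G\subseteq V_d$ of density at least $\tfrac12 q^{-s}$ on which $a_k(P_h)\le s+1$; by the inductive hypothesis $\mathrm{prk}(P_h)\le F_{d-1}(s+1)=:R$ for every $h\in G$.

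It remains to assemble a partition-rank decomposition of $P$ --- of size bounded in terms of $s$ and $d$ only --- from those of the $P_h$, $h\in G$. The structural input here is that $h\mapsto P_h$ is \emph{linear}, so $P_{h+h'}=P_h+P_{h'}$ and $\mathrm{prk}$ is subadditive along sums of elements of $G$; one then wants to pass to a subspace $W\le V_d$ of bounded codimension on which every $P_h$ has partition rank $O_d(R)$, fix a basis of $W$, and reassemble a decomposition of the restriction of $P$ to $V_1\times\dots\times V_{d-1}\times W$, and finally of $P$. \textbf{Making this last step quantitative is the crux and the main obstacle}: done naively it loses a factor of $\dim W$, and the real content of \cite{bl} and of Milicevic~\cite{Mi} --- the latter giving polynomial bounds over every finite field --- is precisely to carry it out so that the resulting bounds on $\mathrm{prk}(P)$, and on the codimension of $W$, are \emph{independent of the field $k$}, as the uniformity built into the statement of Fact~\ref{biasrank} requires.
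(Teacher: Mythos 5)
There is a genuine gap, and you have in fact flagged it yourself. Your preliminary reductions are fine: partition rank dominates Schmidt rank, so it suffices to bound partition rank by a function of the analytic rank; $b(P)$ is indeed a probability in $[0,1]$ for multilinear $P$; the identity $b(P)=\mathbb{E}_{h\in V_d}b(P_h)$ and the averaging step producing a set $G\subseteq V_d$ of density at least $\tfrac12 q^{-s}$ with $a_k(P_h)\le s+1$ on $G$ are correct, as is the base case $d=2$. But the step you label "the crux" is not a technical loose end to be tightened later --- it \emph{is} the theorem. Knowing that $\mathrm{prk}(P_h)\le R$ for all $h$ in a set of density $\tfrac12 q^{-s}$ does not, by any routine argument, yield $\mathrm{prk}(P)\le O_{d,s}(R)$: the density of $G$ degenerates with $q$, so covering a bounded-codimension subspace $W\le V_d$ by bounded sums of elements of $G$ requires field-dependent input, and even granting such a $W$, decomposing $P$ slot-by-slot over a basis of $W$ multiplies the bound by $\dim W$, which is unbounded. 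The arguments that actually close this gap (the Schmidt/Green--Tao style argument in \cite{bl}, and the quantitatively stronger one in \cite{Mi} and in Janzer's work) proceed by a substantially different and more elaborate mechanism --- roughly, exploiting bias of the full family of derivatives together with a linear-algebraic compression so that the final rank bound depends only on $d$ and $s$, uniformly in $k$ and $\dim V$. So your text is a correct reduction of Fact~\ref{biasrank} to the main theorem of those papers, not a proof of it.

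For comparison: the paper itself offers no proof of Fact~\ref{biasrank}; it is imported as a black box from \cite{bl} and \cite{Mi} (this is the "Algebraic Combinatorics" input on which everything else rests, cf.\ Fact~\ref{Main}). So citing those results is entirely legitimate here --- but then the honest form of your write-up is a one-line deduction from the cited bias-versus-rank theorem, rather than an inductive argument whose inductive step is left to the references.
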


\subsection{Basic conjectures and questions}

Before describing the our results we formulate some basic questions and conjectures regarding the notions of rank defined above. 
\begin{conjecture}[d]\label{d}For any $d\geq 2$ there exists $\kk _d>0$ such that 
for any  multilinear polynomial $P$ we have 
$ r_K (P) \leq \kk _d r_{\bar K}(P)$ where $\bar K$ is the algebraic closure of $K$.
\end{conjecture}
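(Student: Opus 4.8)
The inequality $r_{\bar K}(P)\le r_K(P)$ is immediate, since a representation $P=\sum_{i=1}^rQ_iR_i$ over $K$ is one over $\bar K$, so the whole content is the reverse bound, and the plan is to isolate exactly what forces it. I would first reduce to a finite base field. The case $K=\bar K$ is vacuous; if a minimal $\bar K$-representation of $P$ happens to be defined over an extension $L\supseteq K$ of degree $m$, then expanding the $Q_i,R_i$ in a $K$-basis of $L$ and reading off the coefficient of $1$ gives $r_K(P)\le m^2\,r_L(P)=m^2\,r_{\bar K}(P)$ --- which settles $\mR$ versus $\mC$ (with $\kk_d=2$ if one takes real parts) but in general is useful only when $m$ is bounded in terms of $d$; for $K$ a number field or a function field one would instead specialize $P$ at a place with finite residue field $k$ and compare $r_K(P)$ with the ranks of the reductions by semicontinuity. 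Granting the finite-field case, these reductions are routine.

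So assume $K=k=\mF_q$ and $P$ multilinear. A minimal $\bar k$-representation uses only finitely many scalars, so $r_{\bar k}(P)=r_{k'}(P)$ for some finite extension $k'=\mF_{q^n}$ of $k$. The plan is to run everything through the analytic rank, using the two ends of the bias--rank dictionary: the elementary bound $a_L(P)\le r_L(P)$ valid over any field $L$, and Fact~\ref{biasrank}, which (as $\gamma_d\to\infty$) inverts to $r_k(P)\le\gamma_d^{-1}(a_k(P))$ with $\gamma_d^{-1}(t):=\max\{s:\gamma_d(s)\le t\}$. If analytic rank were invariant under field extension up to a factor depending only on $d$, say $a_k(P)\le C_d\,a_{k'}(P)$, then
\[
r_k(P)\ \le\ \gamma_d^{-1}\!\bigl(a_k(P)\bigr)\ \le\ \gamma_d^{-1}\!\bigl(C_d\,a_{k'}(P)\bigr)\ \le\ \gamma_d^{-1}\!\bigl(C_d\,r_{k'}(P)\bigr)\ =\ \gamma_d^{-1}\!\bigl(C_d\,r_{\bar k}(P)\bigr),
\]
which already yields the qualitative statement that $r_k(P)$ is bounded in terms of $d$ and $r_{\bar k}(P)$ alone. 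Upgrading this to the \emph{linear} bound $r_k(P)\le\kk_d\,r_{\bar k}(P)$ asserted by the conjecture requires in addition that $\gamma_d$ be taken linear --- i.e.\ the uniform bias--rank estimate $a_k(P)\ge c_d\,r_k(P)$. (Equivalently, one wants to sandwich both $r_K(P)$ and $r_{\bar K}(P)$ between constant multiples of a manifestly field-independent invariant, such as the geometric rank.)

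The hard part is precisely these two inputs. Fixing an $\mF_q$-basis $\omega_1,\dots,\omega_n$ of $\mF_{q^n}$ and expanding, one sees that the exponential sum defining $a_{\mF_{q^n}}(P)$ is the $\mF_q$-exponential sum of the multilinear form $P\otimes T$ on $\prod_i(V_i\otimes\mF_q^n)$, where $T$ is the order-$d$ trace tensor $T_{j_1\cdots j_d}=\mathrm{Tr}_{\mF_{q^n}/\mF_q}(\omega_{j_1}\cdots\omega_{j_d})$; hence $a_{\mF_{q^n}}(P)=\tfrac1n\,a_{\mF_q}(P\otimes T)$, and since $T$ is nondegenerate by separability one wants $a_{\mF_q}(P\otimes T)\approx n\cdot a_{\mF_q}(P)$. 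For $d=2$ this is immediate: the analytic rank of a bilinear form is the rank of its matrix, which is multiplicative under Kronecker product, so $a_{\mF_{q^n}}(P)=a_{\mF_q}(P)$ on the nose. For $d\ge3$ it is a super-multiplicativity statement for analytic rank under tensor products, of which only the sub-multiplicative inequality is known. The uniform bias--rank estimate ($\gamma_d$ linear) is likewise known only in low degree and open in general, and in small characteristic $r_k$ must moreover be replaced by the non-classical rank $r_{nc}$. So I expect the real obstruction to be the absence of a two-sided, constant-factor, characteristic-free comparison between Schmidt rank and analytic rank: the conjecture is, in effect, a consequence of uniform forms of two central open problems about analytic rank.
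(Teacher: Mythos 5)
The statement you were asked to prove is stated in the paper as a \emph{conjecture}: the paper offers no proof of Conjecture~\ref{d}, only Remark~\ref{3}, which records that the case $d=2$ is elementary and that the case $d=3$ follows from results of Derksen \cite{D} because for $d=3$ the Schmidt rank coincides with the slice rank. So your honest conclusion --- that the general statement reduces to uniform open problems about analytic rank rather than to anything currently available --- is the correct assessment, and in that sense your write-up is not in conflict with the paper; it simply cannot be compared with a proof that does not exist. Your individual observations are sound: the trivial inequality $r_{\bar K}\le r_K$, the $m^2$-loss (or factor-$2$ loss via real parts) when a minimal decomposition is defined over a bounded-degree extension, the restriction-of-scalars identity $a_{\mF_{q^n}}(P)=\tfrac1n a_{\mF_q}(P\otimes T)$, and the $d=2$ computation via matrix rank are all correct, and the two inputs you isolate (a linear bias--rank bound, i.e.\ Conjecture~\ref{main}, and approximate invariance of analytic rank under finite field extensions) would indeed give the finite-field case along the lines you describe.

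Two cautions, since you present parts of the reduction as routine. First, the reduction of the general case to the finite-field case is not routine: reduction at a place with finite residue field $k$ gives $r_k(\bar P)\le r_K(P)$ and $r_{\bar k}(\bar P)\le r_{\bar K}(P)$, i.e.\ inequalities in the wrong direction for bounding $r_K(P)$ from above; to use the finite-field statement you would need to \emph{lift} a short decomposition of the reduction back to $K$, or else bound the degree of a field of definition of a minimal $\bar K$-decomposition --- and such a bounded-descent statement is essentially the content of the conjecture itself (this is also why your first trick settles only $K=\mR$ and other cases where $[\bar K:K]$ is bounded). Second, even granting both open inputs, the analytic-rank route only reaches finite fields and their algebraic closures; for number fields, local fields, or function fields an additional genuine argument would be needed. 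Finally, note that the paper's low-degree evidence goes through a different mechanism than your exponential-sum approach: for $d=3$ one uses an algebraic, field-independent proxy (slice rank and its behaviour under extension, via \cite{D}) rather than bias, which is a useful alternative model for what a field-independent ``sandwiching'' invariant could look like in higher degree (compare your parenthetical remark about geometric rank).
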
 
\begin{remark}\label{3}
\leavevmode
\begin{enumerate}
\item It is easy to see that  Conjecture \ref{d} holds for $d=2$  with $\kk _2=1/2$.
\item Since in the case when $d=3$  the Schmidt rank is equal to the {\it slice} rank the validity of Conjecture \ref{d}(3) follows from Theorem 2.5 and Proposition 4.9 in \cite{D}.
\end{enumerate}
\end{remark}

\begin{conjecture}[d] \label{main}For any $d\geq 2$ there exists $\ep  _d>0$ such that  $a_{\mF _q}(P) \geq \ep _d r_{\mF _q}(P)$
for any  multilinear polynomial of degree $d$.
\end{conjecture}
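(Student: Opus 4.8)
The plan is to reduce the statement to a \emph{linear} comparison of partition rank with analytic rank, and then to establish that comparison. Since $P$ is multilinear of degree $d$, one first passes from the Schmidt rank to the \emph{partition rank} $\operatorname{pr}_{\mF_q}(P)$, the least $r$ for which $P=\sum_{i=1}^{r}f_ig_i$ with each $f_i,g_i$ multilinear in a complementary pair of nonempty unions of the coordinate blocks $V_1,\dots,V_d$. Every partition rank decomposition is a Schmidt decomposition, so $r_{\mF_q}(P)\le\operatorname{pr}_{\mF_q}(P)$; conversely, given a Schmidt decomposition $P=\sum_{i=1}^{r}Q_iR_i$, one expands each $Q_i$ and $R_i$ into their parts that are multilinear in the various sub-blocks and keeps only the products whose total block content equals $(1,\dots,1)$, exhibiting $P$ as a sum of at most $2^{d}r$ partition rank terms; thus $\operatorname{pr}_{\mF_q}(P)\le 2^{d}\,r_{\mF_q}(P)$. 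Hence Conjecture \ref{main} is equivalent to a linear inverse theorem: there is $C_d$, independent of $q$, with $\operatorname{pr}_{\mF_q}(P)\le C_d\,a_{\mF_q}(P)$ for every multilinear $P$ of degree $d$.

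The reverse inequality $a_{\mF_q}(P)\le\operatorname{pr}_{\mF_q}(P)$ is standard, so the two ranks are already known to control each other and the point is purely the \emph{rate}: Fact \ref{biasrank} together with \cite{bl,Mi} gives so far only $a_{\mF_q}(P)\ge\gamma_d(\operatorname{pr}_{\mF_q}(P))$ with $\gamma_d\to\infty$ (polynomial in \cite{Mi}). I see two routes to the linear bound. The first is induction on $d$: slicing $P$ in the last block realizes it as a linear family $z\mapsto P_z$ of $(d-1)$-multilinear forms, a Cauchy--Schwarz in $z$ combined with the multilinearity identity $P_z-P_{z'}=P_{z-z'}$ replaces $a_{\mF_q}(P)$, up to a bounded amount, by the analytic rank of a single $(d-1)$-multilinear form over an enlarged block structure, to which the inductive hypothesis applies; the delicate point is that when the sliced forms $P_z$ have small analytic rank for $z$ in a large subspace of $V_d$, one must read off a partition rank decomposition of $P$ from this structured part \emph{without degrading the constant}, since otherwise the loss compounds through the $d$ stages and only a polynomial bound survives --- this is exactly the gap between \cite{Mi} and the conjecture. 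The second route is geometric, through Theorem \ref{uniform}: a multilinear $P$ of Schmidt rank $r$ cuts out a hypersurface $\bX_P\subset\bV$, and one would use Theorem \ref{uniform} to show $\#\bX_P(\mF_q)=q^{\dim V-1}\bigl(1+O_d(q^{-\ep_d r})\bigr)$; since for multilinear $P$ this point count is equivalent to the exponential sum $q^{-\dim V}\sum_{v\in V}\psi(P(v))$, this yields $a_{\mF_q}(P)\ge\ep_d r_{\mF_q}(P)$ after absorbing an additive constant.

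The main obstacle is obtaining a bound that is simultaneously linear and uniform in $q$: the arithmetic regularity arguments behind Fact \ref{biasrank} lose polynomial (a priori tower-type) factors, so no soft deduction can suffice, and one must either run the degree reduction induction while keeping tight control of the structured part at every level, or prove the sharp point count input above with error exponentially small in the rank, uniformly in $q$ and in the field of definition. For small characteristic no separate argument should be needed here, since the pathologies that force the replacement of $r$ by the non-classical rank $r_{nc}$ in the general-degree results of the paper do not arise for multilinear polynomials, where $r$, $r_{nc}$ and $\operatorname{pr}$ agree up to constants depending only on $d$.
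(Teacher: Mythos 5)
The statement you are trying to prove is presented in the paper as Conjecture \ref{main}, not as a theorem: the paper offers no proof of it, and Remark \ref{rm1}(3) explicitly flags the linear dependence (replacing $\gamma(d)$ by $1$) as an open question. What is actually known is Fact \ref{biasrank}, i.e.\ $a_{\mF_q}(P)\ge\gamma_d(r_{\mF_q}(P))$ with $\gamma_d$ only polynomial \cite{Mi,janzer}, together with the easy reverse inequality $a_{\mF_q}(P)\le r_{\mF_q}(P)$. So no complete argument should be expected here, and indeed your text is not one.

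Concretely, the parts of your proposal that are correct are exactly the parts that only rephrase the conjecture: the comparison $r_{\mF_q}(P)\le\operatorname{pr}_{\mF_q}(P)\le 2^d\,r_{\mF_q}(P)$ (extract the multidegree-$(1,\dots,1)$ component of a Schmidt decomposition) is fine, and it shows the conjecture is equivalent to a \emph{linear} inverse theorem $\operatorname{pr}_{\mF_q}(P)\le C_d\,a_{\mF_q}(P)$; but that equivalence carries no new content. Of your two routes to the linear bound, the first (slicing in the last block, Cauchy--Schwarz, induction on $d$) is precisely the scheme behind \cite{bl,Mi,janzer}, and the step you yourself single out --- converting the information that many slices $P_z$ have small analytic rank into a partition rank decomposition of $P$ \emph{without} more than constant loss --- is exactly where those arguments incur polynomial (or worse) losses; you propose no mechanism to avoid it, so the induction only reproves Fact \ref{biasrank}. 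The second route is circular: Theorem \ref{uniform} is itself deduced from Fact \ref{Main}, whose quantitative content is ``nc-rank $\ge\alpha_d(s)$ implies bias $\le q^{-s}$''; to extract a point count with error $O_d(q^{-\ep_d r})$ at rank $r$ you would need $\alpha_d(s)$ to be linear in $s$, which is equivalent to the very statement being proved. Thus there is a genuine missing idea --- the linear rate itself --- and neither route in the proposal supplies it.
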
 

\begin{remark}\label{leq}
\leavevmode
 \begin{enumerate}
\item The inequality $a_{\mF _q}(P)\leq r_{\mF _q}(P) $ is known. See \cite{kz-approx, lovett-rank}.
\item It is easy to see that Conjecture \ref{main} holds for $d=2$ with $\ep _2=1/2$.
\end{enumerate}
\end{remark}

Let $K$ be  an algebraically close field, and $(P_s)_{s\in S}$ be a family of polynomials. We define  $U\subset S$ as the subset of points 
$t\in S$ such that $r(P_t) = \max _{s\in S} r(P_s) $. It is easy to see that this  subset is constructible.
\begin{question} 
Is the subset $U \subset S$ locally closed?
\end{question}

\subsection{Main results} 

We formulate in the introduction special cases of the main results. To simplify notations we assume that $K$ is an algebraically closed field of characteristic $>d=\deg P$. 
For the validity of these results we do not have to assume that polynomials $P$ are multilinear.

\begin{theorem} For any $d\geq 2$ there exists $r(d)$ such that varieties $\bX _P\subset \bV$ are geometrically irreducible normal varieties with rational singularities.
\end{theorem}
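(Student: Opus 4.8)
The plan is to deduce the geometric statements (irreducibility, normality, rational singularities) from a single analytic input: that when $r(P)$ is large, the fibers of $P\colon \bV\to\bA^1$ are ``equidistributed'' over finite fields, and then transfer this to the algebraically closed field $K$ of characteristic $>d$ via a spreading-out / Lang--Weil argument. First I would fix a model of $P$ over a finitely generated subring of $K$ and reduce mod $p$ for infinitely many primes $p>d$, obtaining polynomials $P_k$ over finite fields $k$ of the same degree and (by semicontinuity of Schmidt rank, plus Conjecture-type stability hypotheses one may assume from the earlier sections) of rank $\ge \gamma(r(d))$ for a function tending to infinity. By Fact \ref{biasrank} the analytic rank $a_k(P_k)$ is then large, which gives $|X_{P_k}(k)| = q^{\dim V-1}(1+O(q^{-N}))$ for $N$ as large as we like once $r(d)$ is chosen large enough; the same estimate applies after any linear change of coordinates and after intersecting with generic linear subspaces, since restriction to a subspace of bounded codimension drops the rank by at most that codimension.

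The key steps, in order, are: (i) choose $r(d)$ so large that, via Fact \ref{biasrank}, every fiber count $|X_{P_k}(k)|$ and every count of $X_{P_k}\cap L$ for $L$ an affine-linear subspace of codimension $\le \dim V$ agrees with the ``expected'' value up to error $o(q^{\dim -3/2})$ or better; (ii) apply the Lang--Weil estimates in reverse: a system of counts matching those of a geometrically irreducible hypersurface of the correct dimension forces $\bX_{P_k}$ to be geometrically irreducible of dimension $\dim V - 1$, and since this holds for infinitely many $p$, $\bX_P$ over $K$ is geometrically irreducible of codimension $1$; (iii) bound the singular locus: I would show $\codim_{\bX_P}(\bX_P)_{\sing}\ge 2$ by the same point-counting philosophy applied to $\bX_P$ intersected with generic planes, using that high rank is inherited, so $\bX_P$ is regular in codimension $1$; being a hypersurface it is a complete intersection, hence Cohen--Macaulay, so $R_1+S_2$ gives normality by Serre's criterion; (iv) for rational singularities, invoke that a complete intersection with ``small'' singular locus whose generic linear sections are again of this type admits, by induction on dimension (Bertini for the linear sections), a resolution with the required vanishing — alternatively quote the characteristic-zero criterion that a normal complete intersection all of whose singularities are ``sufficiently mild'' (e.g. canonical, controlled via the discrepancy/log-canonical-threshold bound coming from the codimension estimate) has rational singularities.

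The main obstacle I anticipate is step (iv): passing from ``normal complete intersection with singular locus of high codimension'' to ``rational singularities'' is not automatic — one genuinely needs that the \emph{depth} of the singularity, or the log canonical threshold, grows with the codimension of the singular locus, and this is where the hypothesis $\mathrm{char}\,K > d$ and the flatness of $P$ (which must be established first, again from the rank hypothesis ruling out fibers of excess dimension) are essential. I would handle it by an inductive slicing argument: a generic hyperplane section $\bX_P\cap H$ is again cut out by a high-rank polynomial of the same degree in one fewer variable, so by induction has rational singularities; then a theorem to the effect that if a generic hyperplane section of a normal $\mathbb{Q}$-Gorenstein variety has rational (indeed canonical) singularities then so does the variety, combined with the base case of dimension small enough that ``regular in codimension $1$'' already forces smoothness, closes the induction. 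The delicate point throughout is making ``high rank'' quantitatively strong enough at each inductive step, which forces $r(d)$ to depend on $d$ through an iterated application of the (possibly fast-growing) inverse of $\gamma_d$.
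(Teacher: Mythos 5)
Your steps (i)--(iii) follow, in spirit, the paper's treatment of irreducibility, dimension and normality (Theorem \ref{fibers}): high nc-rank gives equidistribution over finite fields (Theorem \ref{uniform}), a Lang--Weil argument converts point counts into geometric irreducibility of the correct dimension, and the result is transferred to all algebraically closed fields (the paper uses a model-theoretic transfer rather than your spreading-out; that difference is inessential). One warning already here: ``regular in codimension $1$'' cannot be extracted from counts of $X_P\cap L$ for generic linear $L$ --- at the accuracy you control, point counts do not see singularities --- one must count points on the singular locus itself, i.e.\ on fibers of the augmented system $(P,\partial P/\partial_{v},\partial P/\partial_{w})$, and the fact that suitable directional derivatives can be adjoined while keeping the rank large is a genuine theorem (Fact \ref{der}, from \cite{kz-singular}), not an automatic inheritance; also one cannot ``assume Conjecture-type stability hypotheses'' --- the paper instead chooses the primes of reduction so that the rank survives.

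The genuine gap is step (iv), and neither of your two routes closes it. The Bertini induction runs in the wrong direction: generic hyperplane sections of a variety with rational singularities again have rational singularities, but the converse is false --- a generic hyperplane simply misses a singular locus of small dimension, so the slices carry no information about exactly those points where rationality is in doubt. For instance, the quartic cone $\{x_1^4+x_2^4+x_3^4+x_4^4=0\}\subset \bA^4$ (cone over a K3 surface) has all generic hyperplane sections smooth, yet it does not have rational singularities; and note that the stronger your conclusion in (iii) (singular locus of very high codimension), the more completely generic linear sections avoid the singular points, so the induction discards precisely the relevant locus. Your fallback --- ``quote a criterion that a normal complete intersection whose singular locus has codimension $\geq N(d)$ has rational singularities'' --- is not available: no such criterion exists in terms of the codimension of the singular locus alone (such bounds control at best log-canonical--type thresholds); by Musta\c{t}\u{a}'s jet-scheme characterization, rational singularities of a hypersurface amounts to controlling \emph{all} jet schemes, equivalently the counts $|X(\mZ/p^m\mZ)|$ for every $m\geq 1$, whereas your input is only $\mF_q$-counts, i.e.\ $m=1$ (the cubic cone $\{x_1^3+x_2^3+x_3^3=0\}\subset\bA^n$ satisfies $|X(\mF_q)|=q^{n-1}+O(q^{n-3/2})$ yet never has rational singularities). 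This missing input is exactly what the paper supplies: the new $p$-adic uniformity Theorem \ref{Mainp} (Section \ref{section-pbr}), proved by an inductive Cauchy--Schwarz/Gowers-norm argument, gives $|\bX(\mZ/p^m\mZ)-p^{m(n-1)}|\leq p^{m(n-1)-1/2}$ for all $m$; Theorem A of \cite{an} then yields rational singularities when $P$ is defined over a number field, and the general complex case follows by showing that the locus of parameters whose fiber has rational singularities is defined over $\mQ$ and contains all points over number fields. Without a $p$-adic (or jet-theoretic) ingredient of this kind, the passage from ``normal with high-codimension singular locus'' to ``rational singularities'' cannot be completed.
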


For a formulation of two other results we need a couple of definitions. 

\begin{definition} A polynomial  $P$ of degree $d$ on a $K$-vector space $K^m$
is {\it m-universal} if for any
$Q\in K[x_1,\dots ,x_m] $  of degrees $\leq d$ there exists an affine map 
$\phi :K^m\to V$ such that $ Q= \phi ^\star ( P)$.
\end{definition}

\begin{theorem}[Universal]\label{universal} There exists a function $R(d,m)$ such that any 
 polynomial $P$
of degree $d$ and rank $ \geq R(d,m) $ is $m$-universal.

\end{theorem}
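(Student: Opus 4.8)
The plan is to reduce $m$-universality to a statement about affine slices and then use a rank-dropping argument. Fix $m$ and the target degree $d$. An affine map $\phi\colon K^m\to V$ is determined by a point $v_0\in V$ and vectors $v_1,\dots,v_m\in V$, and $\phi^\star(P)$ is the polynomial $(x_1,\dots,x_m)\mapsto P(v_0+\sum_j x_j v_j)$. Expanding in the $x_j$, the coefficient of a monomial $x^\alpha$ (with $|\alpha|\le d$) is, up to a combinatorial constant, a polarization-type expression in $v_0,\dots,v_m$ built from the full polarization $\tilde P$ of $P$ together with its lower "diagonal restrictions." Concretely, writing $P=\sum_{e\le d}P^{(e)}$ with $P^{(e)}$ the degree-$e$ homogeneous part, the coefficient of $x^\alpha$ in $\phi^\star(P)$ depends polynomially on $v_0$ and multilinearly on the $v_j$'s indexed by $\alpha$, and the leading ($|\alpha|=d$) coefficients are precisely $\tilde P^{(d)}(v_{j_1},\dots,v_{j_d})$. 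So $m$-universality of $P$ is exactly the statement that the map $\Phi\colon V^{m+1}\to K^{N}$ sending $(v_0,\dots,v_m)$ to the tuple of these coefficients is \emph{surjective}, where $N=\binom{m+d}{d}$ is the number of monomials of degree $\le d$ in $m$ variables.

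The key step is then: if $r(P)$ is large enough (depending only on $d$ and $m$), then $\Phi$ is surjective. I would prove this by an inductive/greedy construction of the $v_j$. First handle the top-degree part $P^{(d)}$: one wants vectors $v_1,\dots,v_m$ so that the values $\tilde P^{(d)}(v_{j_1},\dots,v_{j_d})$ realize an arbitrary prescribed symmetric array. Here the point is that a polynomial of high rank is, in the relevant sense, "generic": if we had a nontrivial \emph{algebraic} relation among the entries of $\Phi$ valid identically on $V^{m+1}$, it would force a low-rank (bounded-complexity) identity on $P$ itself, contradicting $r(P)\ge R(d,m)$. To make this precise I would argue contrapositively: if $P$ is \emph{not} $m$-universal, then the image of $\Phi$ lies in a proper subvariety of $K^N$, so there is a nonzero polynomial $G$ on $K^N$ with $G(\Phi(v_0,\dots,v_m))\equiv 0$; unwinding what this says about the coefficients of $P$ (choosing the $v_j$ along coordinate directions in a high-rank decomposition, or using that $P$ cannot be trapped in a space of bounded complexity) yields a bound $r(P)\le R(d,m)$ for an explicit $R$. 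The passage from "$\Phi$ has non-dense image" to "$P$ has bounded rank" is the technical heart, and is where the algebraic-combinatorics input — in the spirit of Fact \ref{biasrank} and the rank machinery of \cite{kz}, \cite{kz-uniform} — gets used: high rank means no bounded-complexity structure, which is what a universal (image-surjective) $\Phi$ requires.

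After the top-degree part is arranged, the lower-degree parts $P^{(e)}$, $e<d$, are adjusted by a descending induction on degree: having fixed $v_1,\dots,v_m$ to control the degree-$d$ coefficients, one still has the freedom to translate by $v_0$ and to rescale, and the degree-$e$ coefficients of $\phi^\star(P)$ depend on $v_0$ through $P^{(e)},P^{(e+1)},\dots,P^{(d)}$ in a way that is affine-linear in suitable auxiliary parameters once the higher coefficients are frozen; each stage again reduces to surjectivity of a polynomial map, handled by the same non-degeneracy principle applied to the appropriate homogeneous piece (whose rank is also controlled, since the rank of $P$ bounds below the ranks of its homogeneous components up to a loss depending only on $d$). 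I expect the main obstacle to be exactly the quantitative reduction in the previous paragraph: turning a qualitative "the coordinate map $\Phi$ is not dominant" into an \emph{effective} upper bound $R(d,m)$ on $r(P)$, uniformly in the field $K$ (and in particular with no dependence on $\mathrm{char}\,K$ beyond $\mathrm{char}\,K>d$, which is needed so that the polarization constants are invertible and the homogeneous decomposition behaves well). Once that is in place, the function $R(d,m)$ is obtained by taking the maximum of the finitely many bounds arising in the induction over $e=d,d-1,\dots,0$.
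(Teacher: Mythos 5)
Your setup is the same as the paper's: you encode $m$-universality as surjectivity of the coefficient map $\phi\mapsto(\lambda_j(\phi^\star(P)))_j$ on the space of affine maps $\phi:K^m\to V$. But the step you call the technical heart is handled incorrectly, and this is a genuine gap. You argue contrapositively that if $P$ is not $m$-universal then the image of this map lies in a proper subvariety, so some nonzero polynomial $G$ vanishes on it. That inference is false: failure of surjectivity does not place the image inside a proper closed subvariety, since the image of a polynomial map is only constructible and can be dense without being all of $K^N$ (e.g.\ $(x,y)\mapsto(x,xy)$ is dominant but not surjective). So your argument, even if completed, would only yield dominance of the coefficient map, not the surjectivity that $m$-universality requires. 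Over a finite field the problem is worse still: dominance of the underlying morphism gives no control at all on whether each fiber has a $K$-rational point, which is exactly what is needed, and which is why the theorem in the paper is stated for $K$ finite or algebraically closed rather than for arbitrary fields with $\operatorname{char}K>d$.

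The paper closes this gap with a quantitative statement you do not have: by Claim 3.11 of \cite{kz}, the coefficient map $\bar R_P$, viewed as a \emph{collection} of polynomials on the space $\Phi$ of affine maps, itself has nc-rank $\geq r$ once the nc-rank of $P$ is $\geq h(r,d,m)$. Surjectivity then follows from the high-rank fiber results: Theorem \ref{uniform} (equidistribution, hence non-empty fibers over finite fields) and Theorem \ref{fibers} (non-emptiness of all fibers over algebraically closed fields). This also makes your proposed two-stage scheme (first realize the top-degree coefficients, then fix the lower-degree ones by a descending induction in $v_0$) unnecessary: because rank of a collection is defined via \emph{all} nonzero linear combinations, the high rank of $\bar R_P$ controls every coefficient, of every degree, simultaneously, and all fibers of $\bar R_P$ are hit at once. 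If you want to salvage your route, the statement you must prove is precisely a rank-transfer lemma of the above type for the full coefficient collection, not a Zariski-density claim about its image.
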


\begin{definition}
Let   $V$ be a $K$-vector space and $X$ a  subset of $V $. 
A function  $f:X\to K$ is  weakly polynomial of degree $\leq a$, if for any  affine subspace $L\subset X$
the  restriction of $f$ on $L$ is a  polynomial of degree $\leq a$.  
\end{definition}
\begin{theorem}For any $a,d\geq 1$ there exists $r(a,d)$ such that the following holds.

For any polynomial $P:V\to K$ of degree $d$ and rank $\geq r(a,d)$, any weakly polynomial function 
of degree $\leq a$ on $X _P\subset V $ is a restriction of a polynomial of degree $\leq a$ on $V$.

\end{theorem}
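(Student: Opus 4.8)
The plan is to reduce the statement about weakly polynomial functions to the $m$-universality result (Theorem \ref{universal}) together with the rank–analytic-rank machinery underlying the results of \cite{kz, kz-uniform}. First I would observe that the hypothesis is local in the following sense: a weakly polynomial function $f$ of degree $\le a$ on $X_P$ is, by definition, a polynomial of degree $\le a$ on every affine line (indeed every affine subspace) contained in $X_P$. Since $P$ has large rank, $X_P$ contains many affine subspaces of bounded dimension — in fact, by $m$-universality with $m$ chosen as a function of $a$ and $d$, through a generic point of $V$ one can find affine subspaces on which $P$ restricts to a universal polynomial, so $X_P$ is far from being contained in any proper subvariety. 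The strategy is then to extend $f$ from $X_P$ to a polynomial $\widetilde f$ of degree $\le a$ on all of $V$.

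The key steps, in order, are as follows. (1) Reduce to the case $a < $ char $K$ (or handle small characteristic via the non-classical rank $r_{nc}$), and set up the space $W$ of polynomials of degree $\le a$ on $V$; we want to show the restriction map $\rho: W \to \{\text{functions on } X_P\}$ hits $f$. (2) Using the universality theorem, choose $m = m(a,d)$ large enough that the ideal-theoretic/combinatorial obstruction vanishes: concretely, restrict $f$ to affine $m$-dimensional subspaces $L \subset X_P$, where $P|_L$ is $m$-universal, and use universality to pull back candidate extensions. (3) The heart is a consistency/gluing argument: produce, for each point or each small subspace, a local polynomial extension of degree $\le a$, and show these agree on overlaps, so that they patch to a global polynomial $\widetilde f \in W$ with $\widetilde f|_{X_P} = f$. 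This patching is exactly where the high-rank hypothesis enters quantitatively — one needs that two polynomials of degree $\le a$ agreeing on $X_P$ (a hypersurface of high rank) must be equal, which follows because a nonzero polynomial of degree $\le a$ vanishing on $X_P$ would force $P$ to divide a power of it, contradicting $r(P) \ge r(a,d)$ once $r(a,d)$ is large; and dually that the system of local compatibility conditions is solvable. (4) Conclude by taking $r(a,d)$ to be the maximum of the ranks demanded by the universality input $R(d, m(a,d))$ and by the divisibility/consistency estimates.

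The main obstacle I expect is step (3): turning the purely local information (polynomiality on each bounded-dimensional affine subspace of $X_P$) into a single global polynomial on $V$. One has to control how the local extensions vary — a priori the degree-$\le a$ polynomial agreeing with $f$ on a given subspace $L$ is not unique as a function on $V$, only its restriction to $L \cap X_P$ is pinned down — and then show that a coherent choice exists across all $L$ simultaneously. The natural approach is an inductive one on $\dim V$ or on the number of "directions", peeling off one linear coordinate at a time: write $V = V' \oplus K e$, use the high rank of $P$ to ensure that generic fibers of the projection $V \to V'$ meet $X_P$ in high-rank hypersurfaces of $V'$ (rank drops by a bounded amount under taking hyperplane sections, by the results of \cite{kz-uniform}), apply the inductive hypothesis to get degree-$\le a$ extensions on the slices, and then interpolate in the $e$-direction, checking that the resulting coefficients are themselves polynomial of the right degree — again using that agreement on a high-rank hypersurface forces equality. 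Throughout, the rank bound $r(a,d)$ must be chosen to absorb the (bounded) rank loss at each of the $O(a)$ or $O(\dim)$ inductive steps, which is precisely the kind of bookkeeping that Theorem \ref{uniform} and its $p$-adic extension are designed to make uniform.
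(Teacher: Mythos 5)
There is a genuine gap, and it sits exactly at your step (3). Your patching mechanism rests on the claim that two polynomials of degree $\le a$ agreeing on $X_P$ must coincide, because otherwise $P$ would divide (a power of) their difference, ``contradicting'' $r(P)\ge r(a,d)$. But there is no contradiction: $Q=PR$ is perfectly compatible with $P$ having enormous rank, and as soon as $a\ge d$ the restriction map from polynomials of degree $\le a$ to functions on $X_P$ has a large kernel (namely $P$ times all polynomials of degree $\le a-d$). Divisibility only rules out nonzero polynomials of degree $<d$ vanishing on $X_P$, so your uniqueness statement holds only in the easy range $a<d$. The paper explicitly flags this non-uniqueness of extensions for $a>d$ as the main difficulty of Theorem \ref{ext}; your gluing argument collapses precisely in that hard case, and the coordinate-peeling induction of step (4) is then left with neither a base case nor a mechanism for making compatible choices of local extensions.

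What is missing is the paper's actual mechanism, which avoids gluing altogether. First, the extension property $\star_a$ is proved directly for one explicit model hypersurface, $Q_m(w_1,\dots,w_m)=\sum_{i=1}^m\mu(w_i)$ with $\mu$ a product of $d$ coordinates (Proposition \ref{special}); here the hypothesis that $K$ contains the $ad$-th roots of unity is essential, since one acts by the finite group $H^m$, decomposes weakly polynomial functions into eigenspaces, and reduces to the fact (from \cite{kz}, Section 4.2) that an eigenfunction vanishing on the distinguished linear subspace $L$ vanishes identically --- note that your proposal never invokes this admissibility hypothesis, a sign that this step is absent. Second, universality (Theorem \ref{universal}) is used in the opposite direction from yours: not to say that $X_P$ contains many universal subspaces, but to produce a single affine map $\phi$ with $\phi^\star(P)=Q_m$, so that $f\circ\phi$ is weakly polynomial on the model and hence extends on $\mathrm{Im}(\phi)$. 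Third, Proposition \ref{subspace} shows that an extension on one affine subspace $W$ on which $P|_W$ has high nc-rank propagates to all of $V$; its proof is essentially the hyperplane induction you gesture at (write $V=W\oplus K$, divide $f$ by the coordinate $t$ via lines through points of $X\cap W$, induct on $a$), but it only functions because the restriction of $f$ to $W$ is already known to extend. If you supply the model case and reorient universality in this way, your outline becomes the paper's proof; as written, step (3) cannot be repaired.
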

The last result we state is for polynomials over finite fields $\mF _q$.
\begin{theorem}
There exists a function  $\rho(d)>0$ such that for any finite field $\mF _q$ of characteristic $>d$ and polynomials $P,Q\in \mF _q[x_1,\dots ,x_N]$ such that $\deg(P)=d$,  $r_{\mF _q}(P)\geq \rho (d)$, $\deg(Q)\leq q/d$ and such that $Q(\bar x)=0$ for all $\bar x\in \mF_q ^N$ such that  $P(\bar x)=0$,  we have $Q=PR$ for some $R\in \mF _q[x_1,\dots ,x_N]$.

\end{theorem}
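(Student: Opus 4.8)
The proof naturally breaks into two reductions: first from the "vanishing ideal" statement to a geometric statement about the hypersurface $\bX_P$, and then an application of the Nullstellensatz together with the high-rank structural theory. The goal is to show that if $Q$ vanishes on $X_P(\mF_q)$ — the $\mF_q$-\emph{rational} points, not the geometric points — then $P \mid Q$ in $\mF_q[x_1,\dots,x_N]$. The key point is that when $r_{\mF_q}(P)$ is large and $\deg Q \le q/d$, vanishing on the comparatively sparse set of rational points is already enough to force the much stronger conclusion that $Q$ lies in the ideal $(P)$, which over the algebraic closure is exactly vanishing on the geometric hypersurface $\bX_P$ together with $\bX_P$ being reduced and irreducible.

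\textbf{Step 1: Geometric irreducibility and reducedness of $\bX_P$.} Invoking the first main theorem of the introduction (and its extension to the relevant characteristic range, $\mathrm{char}\, \mF_q > d$), if $r(d)$ is chosen large enough then $\bX_P$ is a geometrically irreducible, normal (in particular reduced) hypersurface over $\bar{\mF_q}$. Consequently $P$ is, up to scalar, a power of an irreducible polynomial; reducedness of $\bX_P$ forces $P$ itself to be irreducible in $\bar{\mF_q}[x_1,\dots,x_N]$, hence also in $\mF_q[x_1,\dots,x_N]$. Then the ideal $(P)$ is prime and radical, so by the Nullstellensatz a polynomial $Q$ lies in $(P)$ if and only if $Q$ vanishes on all of $\bX_P(\bar{\mF_q})$. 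Thus the whole problem reduces to: \emph{vanishing on $X_P(\mF_q)$ implies vanishing on $\bX_P(\bar{\mF_q})$}, under the degree constraint $\deg Q \le q/d$.

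\textbf{Step 2: From rational points to geometric points.} This is where the analytic/combinatorial input enters and where I expect the main difficulty. The idea is a counting argument: if $Q$ does not vanish on $\bX_P$ geometrically, then $QR' - $ type considerations, or more directly the restriction of $Q$ to $X_P$, defines a nonzero "weakly polynomial" or low-degree regular function on the high-rank hypersurface. One route: consider $P$ and show that on the variety $\bX_P$, a polynomial $Q$ of degree $\le q/d$ vanishing at all $\mF_q$-points must vanish identically — essentially a Schwartz–Zippel / Chevalley–Warning style estimate combined with the fact that high Schmidt rank (via Fact \ref{biasrank}, relating $r_{\mF_q}(P)$ to analytic rank $a_{\mF_q}(P)$) guarantees that $X_P(\mF_q)$ is equidistributed enough — in particular large enough and "spread out" in every affine subspace direction — that no nonzero polynomial of degree $\le q/d$ can vanish on it. Concretely, one counts $\#\{x : P(x) = 0, Q(x) \ne 0\}$ via character sums: write the indicator of $P(x)=0$ as $q^{-1}\sum_{t}\psi(tP(x))$ and use that for high-rank $P$ the sums $\sum_x \psi(tP(x))\chi(\text{stuff involving }Q)$ are small unless $Q$ is divisible by $P$. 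The degree bound $\deg Q \le q/d$ is exactly what is needed to keep $\deg(tP)$ and auxiliary polynomials below the threshold where such character-sum bounds (and the passage through Fact \ref{biasrank}) remain valid, and to avoid the Frobenius collapsing distinct functions.

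\textbf{Main obstacle.} The genuinely hard step is Step 2: controlling the character sum / point count uniformly in $q$ so that high \emph{Schmidt} rank of $P$ (a geometric, characteristic-free notion) translates into the analytic statement that $X_P(\mF_q)$ is a "test set" for the ideal membership of low-degree $Q$. One must be careful that the degree of $Q$ can grow with $q$, so naive Lang–Weil or Schwartz–Zippel bounds over a fixed field are not directly available; the argument must leverage the structure theory (Theorem \ref{universal} and its relatives) to reduce $P$ to a universal polynomial in few variables and then argue there, or alternatively feed everything through Fact \ref{biasrank} and the inequality $a_{\mF_q}(P) \le r_{\mF_q}(P)$ of Remark \ref{leq}. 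I would expect the cleanest writeup to first establish the result for $Q$ of bounded degree using the earlier weak-polynomiality theorem, and then bootstrap to $\deg Q \le q/d$ using a Frobenius-twist argument: any $Q$ vanishing on $X_P(\mF_q)$ of degree $< q$ cannot be "accidentally" $\mF_q$-vanishing without being genuinely in the ideal, because the Frobenius $x \mapsto x^q$ acts trivially only up to degree $q$.
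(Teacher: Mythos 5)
Your Step 1 (high rank $\Rightarrow$ $\bX_P$ geometrically irreducible and reduced, hence $(P)$ prime, reducing everything to showing that vanishing on the rational points forces vanishing on the geometric hypersurface) agrees with the paper's setup, which invokes Theorem \ref{fibers} for exactly this purpose. But in Step 2 you have not supplied the idea that actually carries the paper's proof, and the substitutes you sketch do not work. The paper argues as follows: if $Q\notin (P)$, then by irreducibility and reducedness the variety $\bY=\bX_P\cap\{Q=0\}$ has dimension $N-2$, and by a \emph{rough point-count bound} (Lemma \ref{rb}, due to Hrushovski) one has $|\bY(\mF_q)|\le \deg(Q)\,d\,q^{N-2}$; this bound is proved by intersecting $\bY$ with $N-2$ generic linear combinations of the Frobenius hypersurfaces $x_j^q-x_j=0$ and applying B\'ezout, and crucially it costs only a factor $\deg(Q)$, not anything exponential in $\deg(Q)$. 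Since $Q$ vanishes on all of $X_P(\mF_q)$, we have $\bY(\mF_q)=X_P(\mF_q)$, while Theorem \ref{uniform} gives $|X_P(\mF_q)|>q^{N-1}(q-1)/q$; comparing the two bounds yields a contradiction precisely when $\deg(Q)\,d<q$, which is where the hypothesis $\deg Q\le q/d$ enters.

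Your proposed route for Step 2 cannot be repaired as stated. The character-sum idea ("the sums $\sum_x\psi(tP(x))\chi(\dots Q\dots)$ are small unless $Q$ is divisible by $P$") is unjustified and essentially circular: all the equidistribution machinery of the paper (Fact \ref{Main}, Theorem \ref{uniform}) applies only to polynomials of \emph{bounded} degree and \emph{high rank}, whereas $Q$ has degree growing with $q$ and carries no rank hypothesis whatsoever, so no character-sum estimate controls it. Equidistribution of $P$ alone gives you the lower bound $|X_P(\mF_q)|\gtrsim q^{N-1}$, but the missing ingredient is an \emph{upper} bound on the number of $\mF_q$-points of the $(N-2)$-dimensional set $X_P\cap\{Q=0\}$ that is linear in $\deg Q$; neither the reduction of $P$ to a universal model in few variables (which says nothing about where the unbounded-degree $Q$ vanishes on the whole of $X_P(\mF_q)$) nor the "Frobenius-twist" remark (which is a restatement of the desired conclusion, not an argument) produces such a bound. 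So the proposal correctly locates the difficulty but leaves the decisive counting lemma unproved.
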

In the paper all the above mentioned results are extended  to the case when we replace a polynomial $P$ by a 
 collection $\bar P =(P_1,\dots ,P_c)$  of polynomials on $V$.

 \subsection{Acknowledgement}
We thank U. Hrushovski for his help with simplifying the proof of Theorem \ref{main-null}.

\section{The main tool}

We start with defining the Schimdt rank for a family of polynomials. 

\begin{definition}[Schmidt rank]
\begin{enumerate} 
\item Let $P$ be a polynomial of degree $d$ on a  $K$-vector space $V$. We define the {\em rank} $r_K(P)$ as  the minimal number $r$ such that $P$ can be written in the form $P=\sum _{i=1}^rQ_iR_i$, where $Q_i,R_i$ are polynomials on $V$ of degrees $<d$. 
\item For a collection of polynomials $\bar P =(P_i)_{1\leq i\leq c}$  on $V$ we define the rank $r_k(\bar P)$ as the minimal rank of polynomials $P _{\bar a}:= \sum _{i=1}^ca_iP_i$, $\bar a\in K^c \setminus \{0\}$.
\end{enumerate}
\end{definition}

\begin{remark}
 \begin{enumerate} 
\item We will often consider $\bar P$ as a morphism from $\bV$ to $\bA ^c$.
\item When there is  no confusion we write $r(\bar P)$ instead of  $r_k(\bar P)$. 
\end{enumerate}
\end{remark}

When the characteristic of the field is smaller then the maximal degree in $\bar P$ we need to replace the notion of rank with non-classical rank.

\begin{definition}\label{nc-rank}Let $P$ be a polynomial of degree $d$ on a  $K$-vector space $V$.
\begin{enumerate} 
\item  We denote by $\ti P :V^d\to k$ the symmetric multilinear  form given by 
$\ti P(h_1, \ldots, h_d) : =  \Delta_{h_1} \ldots  \Delta_{h_d} P: V^d \to k$, 
where $\Delta_hP(x) := P(x+h)-P(x)$. Note that $\tilde{P}$ only depends on the top-degree part of $P$.
\item  We define the  {\em non-classical rank (nc-rank)} $r_{nc}(P)$ to be the rank of $\ti P$.
\item For a collection $\bar P =(P_i)_{1\leq i\leq c}$ of polynomials on $V$ we define the nc-rank $ r_{nc}(\bar P)$ as the minimal nc-rank of  
polynomials $P _{\bar a}:= \sum _{i=1}^ca_iP_i$, $\bar a\in k^c \setminus \{0\}$.
\end{enumerate}
\end{definition}
\begin{remark}\label{pd} 
\begin{enumerate}
\item If char$(K) >d$ then $r(P) \sim r_{nc}(P)$ (see \cite{kz}).
\item  In low characteristic it can happen that $P$ is of high rank while $\ti P$ is of low rank.
\end{enumerate}
\end{remark}

\begin{example} Let $K$ be a field of characteristic 
$2$, $\bV =\bA ^n$ and  $P(x_1, \dots ,x_n) = \sum_{1 <i<j<k <l\le n} x_ix_jx_kx_l$ is of rank $\sim n$, but of nc-rank $3$, (see \cite{tz}).
\end{example}

 \begin{definition} Let $k:= \FF _q, V$ be a  $k$-vector space and $ \bar P:V\to k^c$ be  a map. We define
\begin{enumerate}
\item  $F _{\bar t} (\bar P) :=\{v\in V|\bar P(v)= \bar t\}$ , $\bar t\in k^c$.
\item  $\nu _{\bar P}:k^c\to \mC$ is the  function given by $\nu _{ \bar P}(\bar t):=|F _{ \bar t}( \bar P)|/q^{\dim(V)-c}$.
\item A map  $\bar P$ is {\it $s$-uniform} if $|\nu _{\bar P}(\bar t)  - 1|\leq q^{-s}$ 
for all $\bar t\in k^c$.
\end{enumerate}
\end{definition}

\begin{lemma}\label{equi} Let $\bar P=(P_1,\ldots ,P_c)$ , $P_i:V\to k$ be a collection of maps such that all the maps $P _{\bar a}:= \sum _{i=1}^ca_iP_i$, $\bar a\in k^c \setminus \{0\}$ are $s$-uniform. Then the map   $\bar P$ is  $(s-c)$-uniform.
\end{lemma}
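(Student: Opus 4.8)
The plan is to run the standard discrete Fourier analysis on $k^c$. Fix a nontrivial additive character $\psi:k\to\mC^*$ and write $\langle\bar a,\bar x\rangle:=\sum_{i=1}^c a_ix_i$ for $\bar a,\bar x\in k^c$, so that $\sum_{\bar a\in k^c}\psi(\langle\bar a,\bar x\rangle)=q^c$ if $\bar x=0$ and $0$ otherwise. Applying this orthogonality relation with $\bar x=\bar P(v)-\bar t$, summing over $v\in V$, and using $\sum_i a_iP_i(v)=P_{\bar a}(v)$, I get
\[
|F_{\bar t}(\bar P)|=q^{-c}\sum_{\bar a\in k^c}\psi(-\langle\bar a,\bar t\rangle)\sum_{v\in V}\psi(P_{\bar a}(v)).
\]
The term $\bar a=0$ contributes $q^{-c}q^{\dim V}=q^{\dim V-c}$; dividing through by $q^{\dim V-c}$ gives
\[
\nu_{\bar P}(\bar t)-1=q^{-\dim V}\sum_{\bar a\in k^c\setminus\{0\}}\psi(-\langle\bar a,\bar t\rangle)\sum_{v\in V}\psi(P_{\bar a}(v)).
\]

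The next step is to bound a single exponential sum $\sum_{v\in V}\psi(Q(v))$ for an arbitrary $s$-uniform map $Q:V\to k$. Grouping $v$ by the value of $Q$ and applying Fourier inversion on $k$,
\[
\sum_{v\in V}\psi(Q(v))=\sum_{t\in k}\psi(t)\,|F_t(Q)|=q^{\dim V-1}\sum_{t\in k}\psi(t)\,\nu_Q(t);
\]
since $\sum_{t\in k}\psi(t)=0$ one may replace $\nu_Q(t)$ by $\nu_Q(t)-1$, and then the triangle inequality together with $s$-uniformity gives $\bigl|\sum_{v\in V}\psi(Q(v))\bigr|\le q^{\dim V-1}\sum_{t\in k}|\nu_Q(t)-1|\le q^{\dim V-1}\cdot q\cdot q^{-s}=q^{\dim V-s}$. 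By hypothesis every $P_{\bar a}$ with $\bar a\ne 0$ is $s$-uniform, so this estimate applies to each term in the formula for $\nu_{\bar P}(\bar t)-1$.

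Combining the two steps, using $|\psi|\equiv 1$ and that $k^c\setminus\{0\}$ has $q^c-1$ elements,
\[
|\nu_{\bar P}(\bar t)-1|\le q^{-\dim V}(q^c-1)\,q^{\dim V-s}=(q^c-1)q^{-s}<q^{c-s}=q^{-(s-c)}
\]
for every $\bar t\in k^c$, which is exactly the statement that $\bar P$ is $(s-c)$-uniform. I do not expect a genuine obstacle: the only points needing care are keeping the two Fourier inversions (over $k^c$ and over $k$) straight, and the elementary observation that expanding $\mathbf 1[\bar P(v)=\bar t]$ in characters of $k^c$ produces precisely the exponential sums $\sum_v\psi(P_{\bar a}(v))$ attached to the maps $P_{\bar a}$, so that the hypothesis enters in exactly the form in which it is stated.
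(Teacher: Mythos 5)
Your proof is correct and follows essentially the same route as the paper: Fourier analysis (orthogonality of characters) on $k^c$ to write $\nu_{\bar P}(\bar t)-1$ as a sum over $\bar a\neq 0$ of the exponential sums $q^{-\dim V}\sum_{v}\psi(P_{\bar a}(v))$, each bounded by $q^{-s}$ via the $s$-uniformity of $P_{\bar a}$, with the factor $q^c-1<q^c$ accounting for the loss from $s$ to $s-c$. The only difference is cosmetic: you spell out the intermediate step (the second inversion over $k$, using $\sum_{t}\psi(t)=0$) showing that $s$-uniformity of $P_{\bar a}$ bounds its exponential sum, which the paper asserts without detail.
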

\begin{proof}
Let $A$ be a finite commutative group written additively and $\Xi$  the group of characters 
$\chi : A\to \mC ^\star$ of $A$. For a function $f: A\to \mC$ we define the Fourier transform $\hat f(\chi) :\Xi \to \mC$ by
$\hat f(\chi)= |A|^{-1}\sum _{a\in A}\chi (a)f(a) $. The next statement is the formula for the inverse Fourier transform. 
\begin{fact}\label{f} 
$\sum _{\chi \in \Xi}\hat f(\chi)\chi (-a_0)=f(a_0) $ for any $a_0\in A$.
\end{fact}
Let $A=k^c$.
We fix a non-trivial additive character $\psi :k\to \mC ^\star$ and associate 
with any $\bar a\in k^c$ a character $\chi _{\bar a}:k^c\to \mC ^\star$ by 
$\chi _{\bar a} (\bar t)=\psi (\langle \bar a,\bar t \rangle)$ where $\langle, \rangle:k^c\times k^c\to k$ is the natural pairing. The map $\bar a\to \chi _{\bar a} $ is a group isomorphism between $A$ and the group $\Xi$ of characters of $A$.

The Fourier transform of the function $\nu$ is given by 
$$\hat \nu_{\bar P} (\chi _{\bar a})=q^{-c}\sum _{\bar t\in k^c}\nu_{\bar P} (\bar t)\chi (\bar t) =
q^{-\dim(V)}\sum _{v\in V}\psi (P_{\bar a}(v))$$
Since, by the assumptions of the Lemma, for any $\bar a\neq 0$ the map $P_{\bar a}$ is  $s$-uniform 
we see that $| \hat \nu_{\bar P} (\chi _{\bar a}) |\leq q^{-s}$ for $\bar a\neq 0$.

As follows from Claim \ref{f} we have
$$\nu _{\bar P}(\bar t)= 
\sum _{\bar a\in k^c}\psi _{\bar a}(-\bar t) \hat \nu _{\bar P} (\bar a)=1+ \sum _{\bar a\in k^c\setminus \{0\}}\psi _{\bar a}(-\bar t) \hat \nu _{\bar P} (\bar a) $$
for all $\bar t\in k^c$. So $| \nu _{\bar P}(\bar t) -1|\leq q^{-(s-c)}$.
\end{proof}

The following result is from Algebraic Combinatorics (see \cite{bl}, \cite{Mi}).

\begin{fact}\label{Main} There exist a function $\alpha _d(s)$, $\lim _{s\to \infty} \alpha _d(s)=\infty$, such that all polynomials $P:V\to k$ of degree $d$ and $r_{nc}(P)\geq \alpha _d(s) $ are 
$s$-uniform. 
\end{fact}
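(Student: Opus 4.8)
The plan is to prove Fact \ref{Main} — that high non-classical rank forces $s$-uniformity — by combining the two ingredients already assembled in the excerpt: the additive-combinatorics inverse theorem (Fact \ref{biasrank}, relating analytic rank to Schmidt rank for multilinear forms) and the comparison between the rank of a polynomial and the rank of its associated symmetric multilinear form $\tilde P$ from Definition \ref{nc-rank}. First I would reduce from a single polynomial $P$ on $V$ to its top-degree symmetric multilinear form $\tilde P : V^d \to k$. The key classical point (a Weyl-type differencing inequality, valid in the stated range of parameters) is that the exponential sum $|q^{-\dim V}\sum_{v\in V}\psi(P(v))|$ is controlled by a power of the box-norm / Gowers-type average of $\psi \circ \tilde P$ over $V^d$; concretely, raising the sum to the $2^d$-th power and applying the Cauchy–Schwarz/van der Corput differencing $d$ times turns $\psi(P(\cdot))$ into $\psi(\tilde P(h_1,\dots,h_d))$ summed over $h_1,\dots,h_d \in V$. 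This shows $a_k(P) \geq 2^{-d}\, a_k(\tilde P)$ (up to the normalization conventions), so it suffices to bound $a_k(\tilde P)$ from below in terms of $r_{nc}(P) = r_k(\tilde P)$.

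The second step is exactly Fact \ref{biasrank} applied to the multilinear polynomial $\tilde P$ on $V^d = V_1 \times \dots \times V_d$: there is a function $\gamma_d$ with $\gamma_d(s)\to\infty$ such that $a_k(\tilde P) \geq \gamma_d(r_k(\tilde P)) = \gamma_d(r_{nc}(P))$. Chaining the two inequalities gives $a_k(P) \geq 2^{-d}\gamma_d(r_{nc}(P))$. Finally I would unwind the definitions: $s$-uniformity of $P$ (in the single-polynomial case $c=1$, which is what Fact \ref{Main} asserts) is the statement $|\nu_P(t) - 1| \leq q^{-s}$ for all $t \in k$, and by the Fourier expansion already carried out in the proof of Lemma \ref{equi} this follows once $|q^{-\dim V}\sum_{v}\psi(a P(v))| \leq q^{-(s+1)}$ for every $a \neq 0$ — i.e. once $a_k(aP) \geq s+1$ for all $a\neq 0$. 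Since $aP$ has the same top-degree part as $P$ up to the nonzero scalar $a$, one checks $r_{nc}(aP) = r_{nc}(P)$, so the bound $a_k(aP)\geq 2^{-d}\gamma_d(r_{nc}(P))$ holds uniformly in $a$. Setting
\[
\alpha_d(s) := \min\{\, r : 2^{-d}\gamma_d(r) \geq s+1 \,\}
\]
then does the job: if $r_{nc}(P) \geq \alpha_d(s)$ then $a_k(aP) \geq s+1$ for all $a \neq 0$, hence $P$ is $s$-uniform, and $\alpha_d(s) \to \infty$ because $\gamma_d(r)\to\infty$.

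A couple of technical points deserve care rather than being swept under the rug. One should make sure the characteristic hypothesis is not needed here: Fact \ref{Main} is stated without a restriction on $\operatorname{char}(K)$, and indeed the argument above only ever uses $\tilde P$ and $r_{nc}$, never $r(P)$ itself, so it goes through in all characteristics (this is precisely why the non-classical rank, not the Schmidt rank, appears in the statement — compare Remark \ref{pd}(2) and the characteristic-$2$ example following it). One should also be slightly careful that the differencing step produces $\tilde P$ on the nose; with the convention $\Delta_h P(x) = P(x+h) - P(x)$ iterating $d$ times on a degree-$d$ polynomial kills all lower-degree terms and leaves exactly the multilinear form of Definition \ref{nc-rank}(1), and the leftover additive shifts only contribute harmless phase factors that do not affect absolute values.

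The main obstacle is the first step, the Weyl differencing estimate $a_k(P) \geq 2^{-d} a_k(\tilde P)$: one must track carefully how the repeated Cauchy–Schwarz introduces the extra variables, how the normalizing powers of $q$ combine, and that no genuine loss (beyond the harmless $2^{-d}$ and the absorption of lower-order terms) occurs — in particular that the inner sum really is $\psi(\tilde P(h_1,\dots,h_d))$ and not some perturbation of it. By contrast, the invocation of Fact \ref{biasrank} and the Fourier unwinding are essentially bookkeeping given what has already been set up in Lemma \ref{equi}.
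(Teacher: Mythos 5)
Your argument is correct in substance, but be aware that the paper does not actually prove Fact \ref{Main}: it imports it as a black box from \cite{bl} and \cite{Mi}, exactly as it does Fact \ref{biasrank}. So what you have written is not a reconstruction of an argument in the paper but a reduction of one imported fact to the other, and as such it works. The iterated Cauchy--Schwarz/van der Corput step gives $\big|\mE_{v\in V}\psi(P(v))\big|^{2^d}\le \mE_{h_1,\dots,h_d\in V}\,\psi\big(\tilde P(h_1,\dots,h_d)\big)$ exactly: for $\deg P=d$ the $d$-fold difference is independent of the base point, so there are no leftover terms at all (your ``harmless phase factors'' remark is unnecessary). Hence $a_k(P)\ge 2^{-d}a_k(\tilde P)$; Fact \ref{biasrank} applied to the multilinear form $\tilde P$ gives $a_k(\tilde P)\ge\gamma_d(r_k(\tilde P))=\gamma_d(r_{nc}(P))$; and the Fourier inversion of Lemma \ref{equi} with $c=1$, applied to the dilates $aP$ (for which $\widetilde{aP}=a\tilde P$, so $r_{nc}(aP)=r_{nc}(P)$), turns the bias bound into $|\nu_P(t)-1|\le (q-1)q^{-(s+1)}\le q^{-s}$, uniformly in $k$ and $\dim V$. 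Two small points to tighten: since $\gamma_d$ is only assumed to tend to infinity, not to be monotone, define $\alpha_d(s)$ using the monotone envelope $r\mapsto\inf_{r'\ge r}\gamma_d(r')$, otherwise $r_{nc}(P)\ge\alpha_d(s)$ does not immediately yield $\gamma_d(r_{nc}(P))\ge 2^d(s+1)$; and your claim that the argument is characteristic-free tacitly uses that $\tilde P$ is genuinely $k$-multilinear in every characteristic, which deserves a line (it is additive in each $h_i$ and of total degree at most $d$; a non-linear additive dependence on some $h_i$ would have degree at least $p$ in that variable and force total degree $>d$). Compared with the paper's choice to cite the result, your route makes explicit that only the multilinear bias-versus-rank theorem needs to be taken from the literature, the general polynomial case following by standard Weyl differencing plus the Fourier bookkeeping already present in Lemma \ref{equi}; the cost is only the benign $2^{-d}$ loss in the exponent, absorbed into $\alpha_d$.
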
 

\begin{remark}\label{rm1}
\begin{enumerate}
\item Fact \ref{Main} is the corner stone of our work.
\item Note that $\alpha _d(s)$  is independent of the finite field $k$ and of the (dimension of the) $k$-vector space $V$.
\item A stronger  form of this result showing the existence of explicitly definable functions $e(d), \gamma(d)$ such that one can take $\alpha _d(s)= e(d)s^{\gamma(d)} $ is proven in \cite{Mi} and \cite{janzer}. This improvement has a number of important applications in Algebraic Combinatorics. One can ask whether it is possible to replace $\gamma(d)$ by $1$.

\end{enumerate}
\end{remark}

\begin{definition}\label{rbar d} For $\bar d=(d_1,\dots , d_c)$ we write 
$r_{\bar d}:= \alpha _{d}(c+2)$.
\end{definition}

The following result,  to which we often refer,   follows immediately from Lemma \ref{equi} and Claim \ref{Main}.

\begin{theorem} [Uniform] \label{uniform} 
Let $\bar P= ( P_1,\ldots ,P_c)$ be a collection of  polynomials of degrees $\le d$ and of $nc$-rank $r\geq r_{\bar d} $. Then 
$| \nu_{\bar {P}}(\bar t) -q^{-c}|\leq q^{-(c+2)}$ for all $\bar t\in k^c$.
\end{theorem}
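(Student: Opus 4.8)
The plan is to combine the two facts already stated, \ref{Main} and \ref{equi}, with the normalization bookkeeping between $\nu_{\bar P}(\bar t)$ (which is normalized by $q^{\dim V - c}$, hence has average value $1$ over $\bar t \in k^c$) and the quantity $q^{-c}\nu_{\bar P}(\bar t) = |F_{\bar t}(\bar P)|/q^{\dim V}$ that appears in the conclusion.

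\textbf{Step 1: Reduce to individual polynomials.} By hypothesis, $r_{nc}(\bar P) \ge r_{\bar d}$, which by Definition \ref{rbar d} means $r_{nc}(\bar P) \ge \alpha_d(c+2)$. By the definition of nc-rank of a collection, this means every nonzero linear combination $P_{\bar a} = \sum_{i=1}^c a_i P_i$ (which has degree $\le d$) satisfies $r_{nc}(P_{\bar a}) \ge \alpha_d(c+2)$. Note the monotonicity of $\alpha_d$ is not literally needed: we just need $r_{nc}(P_{\bar a})$ to be at least $\alpha_d(s)$ for the specific value $s = c+2$.

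\textbf{Step 2: Apply Fact \ref{Main}.} For each nonzero $\bar a$, since $\deg P_{\bar a} \le d$ and $r_{nc}(P_{\bar a}) \ge \alpha_d(c+2)$, Fact \ref{Main} gives that $P_{\bar a}$ is $(c+2)$-uniform. (Here one should be slightly careful: Fact \ref{Main} is stated for degree-$d$ polynomials, so for a linear combination of degree exactly $d' < d$ one invokes it with $d'$ in place of $d$; since $\alpha$ is increasing in the relevant regime, or else simply because the statement is about degree-$d'$ polynomials directly, the bound $r_{nc}(P_{\bar a}) \ge \alpha_d(c+2) \ge \alpha_{d'}(c+2)$ suffices — this is the one place where a remark on $\alpha_d$ being increasing in $d$, or a direct citation, is wanted.)

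\textbf{Step 3: Apply Lemma \ref{equi} and unwind normalizations.} Since all the maps $P_{\bar a}$, $\bar a \in k^c \setminus\{0\}$, are $s$-uniform with $s = c+2$, Lemma \ref{equi} tells us that $\bar P$ itself is $(s - c)$-uniform, i.e. $2$-uniform: $|\nu_{\bar P}(\bar t) - 1| \le q^{-2}$ for all $\bar t \in k^c$. Finally, multiply through by $q^{-c}$: $|q^{-c}\nu_{\bar P}(\bar t) - q^{-c}| \le q^{-(c+2)}$, which is exactly the claimed inequality $|\nu_{\bar P}(\bar t) - q^{-c}| \le q^{-(c+2)}$ once one accounts for the fact that the $\nu$ in the theorem statement is the $q^{-c}$-rescaled version of the $\nu$ in Lemma \ref{equi} (both conventions appear in the text; the theorem's $\nu_{\bar P}(\bar t)$ is $|F_{\bar t}(\bar P)|/q^{\dim V}$).

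\textbf{Main obstacle.} There is no real mathematical obstacle — the theorem is by design an immediate corollary, as the text says. The only thing to get right is the bookkeeping: which normalization of $\nu$ is in force in the statement versus in Lemma \ref{equi}, and the handling of linear combinations whose degree drops below $d$ (so that Fact \ref{Main} is applied at the correct degree). A clean writeup states the two normalizations explicitly and notes $\alpha_{d'} \le \alpha_d$ for $d' \le d$ (or cites \cite{bl, Mi} for the uniform-over-degree version), then the rest is a one-line chain of inequalities.

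\begin{proof}
By Definition \ref{rbar d}, the hypothesis $r_{nc}(\bar P) \ge r_{\bar d}$ means $r_{nc}(\bar P) \ge \alpha_d(c+2)$, and by the definition of the nc-rank of a collection every nonzero combination $P_{\bar a} = \sum_{i=1}^c a_i P_i$ has $r_{nc}(P_{\bar a}) \ge \alpha_d(c+2)$. Since $\deg P_{\bar a} \le d$, Fact \ref{Main} gives that each $P_{\bar a}$, $\bar a \in k^c \setminus \{0\}$, is $(c+2)$-uniform. By Lemma \ref{equi}, the map $\bar P$ is then $2$-uniform, i.e. $|\nu_{\bar P}(\bar t) - 1| \le q^{-2}$ for every $\bar t \in k^c$, where $\nu_{\bar P}(\bar t) = |F_{\bar t}(\bar P)| / q^{\dim V - c}$. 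Multiplying by $q^{-c}$ yields $\bigl| |F_{\bar t}(\bar P)|/q^{\dim V} - q^{-c} \bigr| \le q^{-(c+2)}$ for all $\bar t \in k^c$, which is the assertion.
\end{proof}
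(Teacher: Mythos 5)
Your proof is correct and is exactly the paper's intended argument: the paper derives Theorem \ref{uniform} immediately from Lemma \ref{equi} and Fact \ref{Main}, just as you do. Your extra care about the normalization of $\nu_{\bar P}$ and about linear combinations whose degree drops below $d$ only makes explicit what the paper leaves implicit.
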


\section{Irreducibility of fibers}
In this section we show  a derivation of the following result from Theorem  \ref{uniform}.

\begin{theorem}\label{fibers} 
\begin{enumerate}
\item For any field $K$ and a collection $\bar P= (P_1,\ldots ,P_c)$, of polynomials $P_i\in K [x_1,\dots ,x_n] $ of degrees $\bar d=(d_1, \ldots, d_c)$ and nc-rank over the algebraic closure of $K$ is greater then  $ r_{\bar d}$, all the fibers  $\bF _{\bar t}(\bar P):=  \bar \bP ^{-1}(\bar t)\subset \bA ^n$, $\bar t\in K^c$ are non-empty reduced, geometrically   irreducible complete intersections.
\item The schemes $\bF _{\bar t}(\bar P) $ are reduced.
\item  The schemes $\bF _{\bar t}(\bar P) $ are  normal.
\item If $K=\mR$ and all degrees $d_i,1\leq i\leq c$ are odd, then  $\bF _{\bar t}(\bar P)(\mR )\neq \emp ,\bar t\in \mR ^c$.
\end{enumerate} 
\end{theorem}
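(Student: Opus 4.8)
\textbf{Proof plan for Theorem \ref{fibers}.}

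The strategy is to reduce statements about the geometry of the schemes $\bF_{\bar t}(\bar P)$ over an algebraically closed field to point-counting statements over finite fields, which are then controlled by Theorem \ref{uniform}. The key algebraic input is a spreading-out / Lefschetz-principle argument: the collection $\bar P$, being defined by finitely many polynomials with coefficients in a finitely generated subring of $K$, can be reduced modulo a prime $\fp$ of that ring, and crucially the nc-rank does not drop under reduction at a generic prime (one must check this: the nc-rank is the Schmidt rank of the multilinear form $\ti P_{\bar a}$, and being $<r$ is an algebraic condition that, if it fails generically, fails at all but finitely many primes). Thus for almost all $q$ we obtain a collection $\bar P_q$ over $\mF_q$ of nc-rank $\geq r_{\bar d}$, to which Theorem \ref{uniform} applies, giving $|F_{\bar t}(\bar P_q)| = q^{n-c}(1 + O(q^{-2}))$ uniformly in $\bar t$.

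For part (1), the estimate $|F_{\bar t}(\bar P_q)| = q^{n-c} + O(q^{n-c-2})$ forces each fiber to be non-empty (for $q$ large) and, by the Lang–Weil bounds run in reverse together with the Chebotarev-style argument of \cite{kz}, to be geometrically irreducible of dimension exactly $n-c$: if $\bF_{\bar t}(\bar P)$ had several geometric components, or a component of dimension $>n-c$, the count would read $Nq^{n-c}$ or $q^{n-c+1}$ to leading order with $N>1$, contradicting the uniform estimate; if it had dimension $<n-c$ the count would be $O(q^{n-c-1})$, again contradicting non-emptiness with the right main term. Dimension $n-c$ for $c$ equations in $\mA^n$ is exactly the complete intersection condition, so $\bF_{\bar t}(\bar P)$ is a set-theoretic complete intersection; that it is scheme-theoretically a complete intersection (and reduced, part (2)) follows because a variety cut out by $c$ equations whose codimension is $c$ is automatically Cohen–Macaulay, hence has no embedded primes, and genericity of smooth points (coming from the high-rank hypothesis applied to the Jacobian criterion, cf. the singular-locus codimension estimates in \cite{kz}) shows it is generically reduced; Cohen–Macaulay plus generically reduced gives reduced. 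For part (3), normality is Serre's criterion $R_1 + S_2$: $S_2$ is immediate from Cohen–Macaulayness, and $R_1$ — regularity in codimension $1$ — reduces to showing that the singular locus of $\bF_{\bar t}(\bar P)$ has codimension $\geq 2$, which is precisely the kind of statement proved in \cite{kz} by bounding, via rank, the locus where the Jacobian matrix of $\bar P$ drops rank. Part (4) is separate and real-analytic: when all $d_i$ are odd, for any line $\ell \subset \mR^n$ in a generic direction the restriction $P_i|_\ell$ is an odd-degree (hence surjective) real polynomial; one wants to solve $\bar P = \bar t$ simultaneously, which follows by a topological degree argument — the map $\bar P:\mR^n \to \mR^c$ is proper onto its image in a suitable sense after restricting to a large ball where the leading (odd) homogeneous parts dominate, and the high-rank hypothesis guarantees the leading-form map $\mR^n \to \mR^c$ is surjective with the fiber over $0$ of the expected dimension, so a mod-$2$ degree computation gives a real point in $\bF_{\bar t}(\bar P)(\mR)$.

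The main obstacle I expect is the passage from the \emph{uniform} asymptotic point count over $\mF_q$ to the \emph{geometric} conclusions over $\bar K$ — specifically, verifying that nc-rank is preserved under reduction mod $p$ for almost all $p$, and then leveraging the sharp error term $q^{-(c+2)}$ (rather than a mere power saving) to pin down irreducibility rather than just bounded numbers of components; this is where the precise form of $r_{\bar d} = \alpha_d(c+2)$ in Definition \ref{rbar d} is used, and it is the technical heart borrowed from \cite{kz}. The singular-locus codimension bounds underlying parts (2) and (3) are the other substantial ingredient, and the real-point statement (4), while independent, requires its own careful degree-theory argument rather than following formally from the combinatorics.
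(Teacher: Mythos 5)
Your treatment of parts (1)--(3) is essentially the paper's strategy: use Theorem \ref{uniform} to get the uniform point count over every finite extension, feed it into the Lang--Weil comparison to deduce dimension $n-c$ and a single top-dimensional geometric component, and then get (2) and (3) from ``complete intersection (hence Cohen--Macaulay) plus smoothness in codimension $0$, resp.\ $1$,'' the latter coming from high rank of the collection augmented by directional derivatives (the paper invokes Fact \ref{der} from \cite{kz-singular} and applies part (1) to the augmented collection $(\bar P,\partial P_i/\partial_{v_i})$, which is the same input you cite as the singular-locus estimates of \cite{kz}). The one genuine divergence in (1) is the transfer step: you pass from an arbitrary field to finite fields by spreading out and reduction mod $\fp$, which forces you to prove that nc-rank does not drop at almost all primes (provable, via Chevalley constructibility of the locus of rank $<r$, but it is an extra lemma), whereas the paper avoids this entirely: it proves the statement for algebraic closures of finite fields and then transfers to all algebraically closed fields by observing that, for fixed $n$ and $\bar d$, the property $\star(n,\bar d)$ is first order in the theory of algebraically closed fields. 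Both routes work; the model-theoretic one is shorter precisely because $r_{\bar d}$ is independent of $n$ and $q$, while yours is more constructive but needs the rank-preservation lemma spelled out.

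Part (4) is where your proposal has a real gap. The claim that on a large ball ``the leading (odd) homogeneous parts dominate'' is false in general: an odd-degree real form in two or more variables vanishes on a nontrivial real cone, so $\bar P$ is not proper along those directions, and mod-$2$ degree theory does not directly apply to a map $\mR^n\to\mR^c$ with $n>c$ in any case; restricting to a generic $c$-dimensional real subspace does not by itself restore properness of the restricted map. The paper's argument sidesteps all of this: intersect $\bF_{\bar t}(\bar P)$ with a generic real affine subspace $\bW$ of dimension $c$; by part (1) and B\'ezout the set $\bF_{\bar t}(\bar P)\cap\bW(\mC)$ has $\prod_i d_i$ points, an odd number since all $d_i$ are odd, and complex conjugation acts on this finite set, hence fixes a point, which is the desired real point. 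You should replace your degree-theoretic sketch by this parity-plus-conjugation argument (or else supply a genuine properness/a priori bound, which the high-rank hypothesis does not obviously give).
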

\begin{remark} For fields of characteristic zero Theorem \ref{fibers} follows from \cite{S}. We include proofs to present our technique in details in  the simplest case.
\end{remark}

\begin{proof} We first prove part $(1)$.
As is well known (see Krull's principal theorem, \cite{M}) any irreducible component $\bY$ of 
$ \bF _{\bar t}(\bar P) $ is of dimension $\geq (n-c)$. So it is sufficient to show that the varieties $ \bF _{\bar t}(\bar P) $ are  irreducible and of dimension $\leq (n-c)$.

We first consider the case when $K$ is a finite field when we can use  the following  result (see \cite{LW}). 

Let $k :=\FF _q$, $k_l :=\FF _{q^l}$. Let $\bX$ be an $m$-dimensional algebraic variety defined over $k$ and  $c(\bX)$ be the number of irreducible components of $\bX$ of dimension $m$ (considered as a variety over the algebraic closure $\bar k$ of $k$). We define $\tau _l(\bX) :=\frac {|\bX (k_l)|}{q^{ml}}$, for $ l\geq 1$.

\begin{claim}\label{W} There exists $u\geq 1$ such that $\lim _{l\to \infty} \tau _{lu}(\bX)=  c(\bX)$.
\end{claim}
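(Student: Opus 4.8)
Let $\bX$ be an $m$-dimensional variety over $k = \FF_q$, let $c(\bX)$ be the number of top-dimensional geometric irreducible components, and $\tau_l(\bX) := |\bX(k_l)|/q^{ml}$. Then there is $u \geq 1$ with $\lim_{l\to\infty}\tau_{lu}(\bX) = c(\bX)$.

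I will prove this by decomposing $\bX$ into its irreducible components over $\bar{k}$ and applying the Lang--Weil estimate to each. Let me lay out the plan.

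\medskip

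The plan is to reduce to the case of a geometrically irreducible variety via a Galois-orbit decomposition, then invoke Lang--Weil. First I would write $\bX_{\bar k} = \bigcup_{i} \bY_i$ as the union of its irreducible components over $\bar k$; among these, let $\bY_1,\dots,\bY_N$ be the ones of dimension exactly $m$, so $N = c(\bX)$, and let the rest have dimension $\leq m-1$. The absolute Galois group $\mathrm{Gal}(\bar k/k)$ permutes the $\bY_i$, and each $\bY_i$ is defined over some finite extension $k_{d_i}$ of $k$; let $u$ be the least common multiple of all the $d_i$ (or just of the $d_i$ attached to the top-dimensional components, padding the others does no harm), so that every $\bY_i$ is defined over $k_u$ and is geometrically irreducible there. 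Passing from $k$ to $k_u$ replaces $q$ by $q^u$ and $\tau_{lu}(\bX)$ (computed over $k$) equals $\tau_l(\bX)$ computed over $k_u$; so it suffices to prove $\lim_{l\to\infty}\tau_l(\bX) = c(\bX)$ under the extra hypothesis that every geometric component of $\bX$ is already defined over $k$ and geometrically irreducible.

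\medskip

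Under that hypothesis, inclusion--exclusion on the finite union $\bX = \bigcup_i \bY_i$ gives
\[
|\bX(k_l)| = \sum_{\emptyset \neq I} (-1)^{|I|+1}\Big|\big(\textstyle\bigcap_{i\in I}\bY_i\big)(k_l)\Big|.
\]
For each nonempty $I$, the intersection $\bigcap_{i\in I}\bY_i$ is a closed subscheme of $\bX$ defined over $k$; if $|I|\geq 2$ it is a proper closed subset of each $\bY_i$, hence has dimension $\leq m-1$ when all the $\bY_i$ in question are $m$-dimensional, and in any case dimension $\leq m-1$ unless $|I|=1$ and $\bY_i$ is top-dimensional. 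The Lang--Weil theorem states that for a geometrically irreducible variety $\bZ/k$ of dimension $e$ one has $|\bZ(k_l)| = q^{el}(1 + O(q^{-l/2}))$, and for any variety of dimension $\leq e$ one has the crude bound $|\bZ(k_l)| = O(q^{el})$. Applying this: each of the $N = c(\bX)$ top-dimensional $\bY_i$ contributes $q^{ml}(1 + O(q^{-l/2}))$, while every other term in the inclusion--exclusion sum is $O(q^{(m-1)l})$. Dividing by $q^{ml}$ gives $\tau_l(\bX) = c(\bX) + O(q^{-l/2}) \to c(\bX)$ as $l\to\infty$, which is exactly what is needed (and in fact gives convergence without passing to a subsequence, once we are over $k_u$).

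\medskip

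The main obstacle is the bookkeeping around the Galois action and the choice of $u$: one must be careful that the period $u$ can be chosen uniformly so that \emph{all} top-dimensional geometric components become individually defined and geometrically irreducible over $k_u$, and that lower-dimensional components (and all pairwise intersections) remain genuinely lower-dimensional so their point counts are absorbed into the error term. Everything else is a direct citation of Lang--Weil \cite{LW} together with elementary inclusion--exclusion; no new geometric input is required. I would remark that the passage to a subsequence $\{lu\}$ is genuinely necessary in general, since for a variety like $\mathrm{Spec}\,k_2$ (one geometric point, not defined over $k$) one has $\tau_l = |k_2\otimes_k k_l| \in \{0,2\}$ depending on the parity of $l$, with limit along even $l$ equal to $c(\bX) = 1$... wait, $|(\mathrm{Spec}\,k_2)(k_l)|$ equals $2$ if $2\mid l$ and $0$ otherwise, so $\tau_{2l}\to 2$, not $1$; the correct normalization is that $c$ counts geometric components, so here $c = 1$ — let me not include a potentially-wrong example and simply note that taking $u$ to be a common period of the Galois orbits is what forces the limit to exist and to equal $c(\bX)$.
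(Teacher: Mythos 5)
Your argument is correct and is essentially the paper's own route: the paper states Claim \ref{W} as a quoted consequence of Lang--Weil \cite{LW}, and your reduction to $k_u$ by Galois descent of the geometric components followed by inclusion--exclusion and the Lang--Weil estimate is precisely the standard derivation of that statement. (A minor aside: the example you withdrew is in fact consistent with the claim, since $\mathrm{Spec}\,k_2$ has $c=2$ geometric components and $\tau_{2l}\to 2$.)
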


For a proof of Theorem \ref{fibers} in the case when $K=\bF _q$ it is sufficient to  observe that  Claim \ref{W} and Theorem \ref{uniform} imply that $\dim(\bF _{\bar t}(\bar P) )=n-c$ and $c( \bF _{\bar t}(\bar P) )=1$.

Now we consider the case when $K$ is an algebraic closure of $\bF _q$.
Since $K=\bigcup \bF _{q^n}$ we may assume that $\bar t\in \bF _{q^n} ^c$. So $ \dim(\bF _{\bar t}(\bar P) )=n-c$ and $c( \bF _{\bar t}(\bar P) )=1$. Therefore  
Theorem \ref{fibers} is proven  in the case when $K$ is an algebraic closure of a finite field.
\bigskip

We start the reduction of the general case of Theorem \ref{fibers} to the case when $K$ is an algebraic closure of a finite field with a reformulation.

\begin{definition} Let $\bar d=(d_1,\dots ,d_c)$ and  $n\geq 1$.
A field $K$ has property $\star (n,\bar d)$ if  for any collection $\bar P = (P_i)_{i=1}^c$ of polynomials $P_i\in K[x_1,\dots ,x_n] $
of degrees $d_i$ and $nc$-rank $\geq r_{\bar d}$, all the varieties $\FF _{\bar t}, \  \bar t\in K^c$ are irreducible of dimension  $\dim(V)-c$, where $r_{\bar d} $ is as in Definition \ref{rbar d}.
\end{definition}

\begin{claim}[n]\label{fiber}
All fields have  the property $\star (n,\bar d)$ for any $ \geq 1, \bar d=(d_1,\dots ,d_c) $.
\end{claim}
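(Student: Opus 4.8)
\textbf{Proof plan for Claim \ref{fiber}.}

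The plan is to reduce the statement over an arbitrary field $K$ to the already-established case of an algebraic closure of a finite field, using a standard model-theoretic / spreading-out argument. First I would reduce to the case $K = \bar K$ algebraically closed: the nc-rank of $\bar P$ can only stay the same or increase under field extension (more precisely, $r_{nc,K}(\bar P) \ge r_{nc,\bar K}(\bar P)$ is the inequality we need, since writing a lower-rank decomposition over $\bar K$ restricts from $K$), and irreducibility and dimension of the fibers $\bF_{\bar t}(\bar P)$ are geometric properties that can be checked after base change to $\bar K$; also every $\bar t \in K^c$ lies in $\bar K^c$. Next, over an algebraically closed field the property $\star(n,\bar d)$ is a first-order property in the language of rings: having nc-rank $\ge r_{\bar d}$ is the negation of the existence of a decomposition $\ti P_{\bar a} = \sum_{i=1}^{r_{\bar d}-1} Q_i R_i$ with the $Q_i, R_i$ multilinear of bounded degree (a bounded existential statement over the coefficients, ranging over $\bar a \in K^c \setminus \{0\}$, which after projectivizing is again bounded), and ``all fibers are irreducible of dimension $n-c$'' is expressible by a first-order scheme over the coefficients of $\bar P$ and the point $\bar t$ by standard constructibility of the locus where fibers have given dimension and number of top-dimensional components (this is where one invokes the elimination-of-quantifiers picture behind Claim \ref{W}). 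Hence $\star(n,\bar d)$ is equivalent to a sentence $\varphi_{n,\bar d}$ in the theory $\mathrm{ACF}$.

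Then I would apply the Lefschetz principle / compactness: $\mathrm{ACF}_p$ for each prime $p$ is complete, and a sentence true in $\mathrm{ACF}_p$ for all large $p$ is true in $\mathrm{ACF}_0$; and within $\mathrm{ACF}_p$ it suffices to verify $\varphi_{n,\bar d}$ in the single model $\overline{\FF_p}$. But we are told that Theorem \ref{fibers}(1) — equivalently $\star(n,\bar d)$ — already holds when $K$ is an algebraic closure of a finite field, \emph{for every} $q$, hence for every characteristic; note crucially that $r_{\bar d} = \alpha_d(c+2)$ is independent of the field by Remark \ref{rm1}(2), so the \emph{same} bound works uniformly across all characteristics. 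Therefore $\varphi_{n,\bar d}$ holds in $\overline{\FF_p}$ for all $p$, hence in $\mathrm{ACF}_p$ for all $p$ and in $\mathrm{ACF}_0$, hence in every algebraically closed field, hence — by the first reduction — in every field.

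The main obstacle, and the step deserving the most care, is the first-order expressibility of ``all fibers $\bF_{\bar t}(\bar P)$ are irreducible of dimension $n-c$'' uniformly in $\bar t$ and in the coefficients of $\bar P$. Irreducibility of a variety is not by itself a single first-order sentence, but the locus of coefficient-tuples for which the fiber over a given $\bar t$ has dimension $\le n-c$ is constructible (by Chevalley), and the number of irreducible components of top dimension is a constructible function of the parameters (this is exactly the content needed to make Claim \ref{W} uniform — the integer $c(\bX)$ varies constructibly in families), so combining these gives that $\{(\bar P, \bar t) : \bF_{\bar t}(\bar P) \text{ is irreducible of dim } n-c\}$ is constructible; intersecting with the constructible set where $r_{nc}(\bar P) \ge r_{\bar d}$ and checking the complement is empty is then a first-order sentence. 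One should also double-check that the reducedness and normality assertions in parts (2)–(3) of Theorem \ref{fibers}, which are not part of $\star(n,\bar d)$, are not needed here — Claim \ref{fiber} only concerns irreducibility and dimension — so the argument above is self-contained modulo the already-proven finite-field case.
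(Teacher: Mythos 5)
Your proposal follows essentially the same route as the paper: reduce to algebraically closed fields, observe that $\star(n,\bar d)$ is a first-order property (made precise via constructibility of the rank condition and of the locus of irreducible fibers of dimension $n-c$), and transfer the statement from algebraic closures of finite fields to all algebraically closed fields by completeness of $\mathrm{ACF}_p$ and the Lefschetz principle, which is exactly the model-theoretic fact the paper cites from \cite{Ma}. One small verbal slip: nc-rank can only \emph{decrease} (not increase) under field extension, i.e. $r_{nc,\bar K}(\bar P)\le r_{nc,K}(\bar P)$ as your displayed inequality correctly states; since the rank hypothesis in Theorem \ref{fibers}(1) is taken over the algebraic closure anyway, this does not affect the reduction.
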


\begin{proof}
It is clear that it is sufficient to prove this claim for algebraic closed fields.
Our proof of Claim \ref{fiber}$(n)$  uses the following result from 
Model theory (see \cite{Ma}). 

\begin{fact} Let $T$ be the theory of algebraically closed fields. Then any first order property in $T$ which is  true for all algebraic closures of finite fields 
is true for all algebraically closed fields. 
\end{fact}

Since $\star (n,d)$ is a first order property  in $T$, Claim \ref{fiber}$(n)$, $n\geq 1$ are proven.
\end{proof}

Since the constant $r(\bar d)$ does not depend on $n$, the validity of Claim \ref{fiber}$(n)$ for all $n\geq 1$, implies the validity of the part $(1)$ of
Theorem \ref{fibers}.

\ms

Our proof of parts $(2)$ and $(3)$ is based on the following statement (see \cite{kz-singular}).
\begin{fact}\label{der} There exists a function $\ti r(r,\bar d)$ such that for any collection $\bar P$ of degree $\bar d$ and $nc$-rank $\geq \ti r(r,\bar d) $ there exist $v_i,w_i\in V,1\leq i\leq c$ such that the collection 
$\bar Q= (\bar P, P_i/\partial _{v_i}, \partial P_i/ \partial _{w_i})_{ 1\leq i\leq c }$ is of $nc$-rank $\geq r$. 
\end{fact}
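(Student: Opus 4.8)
The plan is to pick the directions $v_i,w_i\in V$ at random and show that, with positive probability, \emph{every} nonzero combination of the polynomials occurring in $\bar Q$ has non-classical rank $\ge r$; the threshold $\ti r(r,\bar d)$ will then be read off from a union bound. I would work over a finite field $k=\FF_q$ — the natural setting for the exponential-sum method — and recover the statement over other fields by the transfer technique used elsewhere in the paper; I also assume $d:=\max_i d_i\ge 2$ (for $d=1$ the assertion is degenerate, and for $d=2$ one uses the convention that a nonzero linear form has infinite non-classical rank). Throughout I would use that $r_{nc}(R)=r_k(\ti R)$ for any polynomial $R$, together with the identity $\widetilde{\partial_v R}(\cdot)=\ti R(\cdot,v)$, which is \emph{linear} in $v$; hence for $\deg R=e$ one has $r_{nc}(b\,\partial_v R+c\,\partial_w R)=r_k\big(\ti R(\cdot,bv+cw)\big)$ whenever $\ti R(\cdot,bv+cw)\ne 0$. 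To keep the exposition clean I would first treat the case $d_1=\dots=d_c=d$. Split a nonzero combination $R=\sum_i a_iP_i+\sum_i b_i\,\partial_{v_i}P_i+\sum_i c_i\,\partial_{w_i}P_i$ of $\bar Q$ into two cases: if $\bar a\ne 0$, the derivative terms have degree $<d$, so $\ti R=\widetilde{\sum_i a_iP_i}$ and $r_{nc}(R)=r_{nc}(\sum_i a_iP_i)\ge r_{nc}(\bar P)$ — harmless once $\ti r(r,\bar d)\ge r$, with no condition on the directions; if $\bar a=0$, $(\bar b,\bar c)\ne 0$, then writing $u_i:=b_iv_i+c_iw_i$ and $S:=\{i:(b_i,c_i)\ne(0,0)\}\ne\emptyset$, it remains to guarantee that $r_k\big(\sum_{i\in S}\ti P_i(\cdot,u_i)\big)\ge r$ for every such $S$ and $(u_i)_{i\in S}$ (once this holds, $\sum_{i\in S}\ti P_i(\cdot,u_i)$ is automatically nonzero, so $R$ really has degree $d-1$ and $r_{nc}(R)$ equals this Schmidt rank, as needed).

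The technical heart is an estimate on the bad set. Write $\rho:=r_{nc}(\bar P)$, let $\gamma_d$ be the unbounded (and, without loss of generality, non-decreasing) function of Fact~\ref{biasrank}, and for nonempty $S$ put $B_S:=\{(u_i)_{i\in S}\in V^S:\ r_k(\sum_{i\in S}\ti P_i(\cdot,u_i))<r\}$. I would prove $|B_S|\le q^{2r-\gamma_d(\rho)}\,q^{|S|\dim V}$. Since each $\ti P_i$ is a $d$-multilinear form with $r_k(\ti P_i)=r_{nc}(P_i)\ge\rho$, Fact~\ref{biasrank} gives $a_k(\ti P_i)\ge\gamma_d(\rho)$. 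Using that $u\mapsto\ti P_i(h,u)$ is linear and that the exponential sum of a multilinear form is a nonnegative real, a short Cauchy--Schwarz argument gives
\[
\mathbb{E}_{(u_i)_{i\in S}}\Big|\,\mathbb{E}_{h\in V^{d-1}}\psi\big(\textstyle\sum_{i\in S}\ti P_i(h,u_i)\big)\Big|^2=\Pr_{h,h'}\big[\,\ti P_i(h,\cdot)=\ti P_i(h',\cdot)\ \ \forall i\in S\,\big],
\]
and, bounding the right-hand side by the event for a single $i_0\in S$ and evaluating it,
\[
\Pr_{h,h'}\big[\ti P_{i_0}(h,\cdot)=\ti P_{i_0}(h',\cdot)\big]=\mathbb{E}_{\mu\in V}\,q^{-2a_k(\ti P_{i_0}(\cdot,\mu))}\le\mathbb{E}_{\mu\in V}\,q^{-a_k(\ti P_{i_0}(\cdot,\mu))}=q^{-a_k(\ti P_{i_0})}\le q^{-\gamma_d(\rho)}.
\]
Markov's inequality then bounds the fraction of $(u_i)_{i\in S}$ with $a_k(\sum_{i\in S}\ti P_i(\cdot,u_i))<r$ by $q^{2r-\gamma_d(\rho)}$; since $a_k\le r_k$ for multilinear forms (Remark~\ref{leq}), that fraction dominates $|B_S|/q^{|S|\dim V}$, which gives the estimate.

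To conclude, I would choose $v_1,w_1,\dots,v_c,w_c\in V$ uniformly and independently. For each fixed $(\bar b,\bar c)\ne 0$ with support $S$, the tuple $(b_iv_i+c_iw_i)_{i\in S}$ is uniform in $V^S$, so the corresponding combination of $\bar Q$ has $r_{nc}<r$ with probability $|B_S|/q^{|S|\dim V}\le q^{2r-\gamma_d(\rho)}$; a union bound over the fewer than $q^{2c}$ admissible $(\bar b,\bar c)$ shows that \emph{some} combination of $\bar Q$ has $r_{nc}<r$ with probability at most $q^{2c+2r-\gamma_d(\rho)}$. Hence, as soon as $\gamma_d(\rho)>2c+2r$, a valid choice of directions exists, and since $\gamma_d\to\infty$ one may take $\ti r(r,\bar d):=\max\big(r,\ \min\{\rho:\gamma_d(\rho)>2c+2r\}\big)$. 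For a general $\bar d$ I would run exactly the same argument after stratifying a combination $R$ by the degree $e$ that dominates it: the ``harmless'' case is the one in which the $e$-multilinear leading part $\ti R$ involves only the $P_i$ with $d_i=e$ (so $\ti R$ is a combination of the $\ti P_i$, of rank $\ge\rho$), and in the complementary case $\ti R$ involves a slice $\ti P_i(\cdot,u_i)$ with $d_i=e+1$ and the Cauchy--Schwarz bound goes through after inserting a trivially bounded, $h$-only phase; this enlarges $\ti r$ by at most a bounded amount.

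I expect the second paragraph — the bad-set estimate — to be the only genuine obstacle. The point is that one must lower-bound the Schmidt rank of a \emph{sum} $\sum_{i\in S}\ti P_i(\cdot,u_i)$ of slices of \emph{different} forms at \emph{different} points, where the clean per-polynomial identity $b\,\partial_v+c\,\partial_w\leftrightarrow\partial_{bv+cw}$ no longer helps; the Cauchy--Schwarz reduction to $\Pr_{h,h'}[\ti P_i(h,\cdot)=\ti P_i(h',\cdot)\ \forall i]$, evaluated via the analytic rank of the individual $\ti P_i$ through Fact~\ref{biasrank}, is what carries it through, and the only remaining work — getting the unequal-degree stratification exactly right — is routine.
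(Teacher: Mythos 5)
Since the paper itself gives no proof of Fact~\ref{der} (it is quoted from \cite{kz-singular}, where it is established by an algebraic argument valid over an arbitrary field), I am judging your argument on its own merits. Over a fixed finite field your core estimate is correct and does carry the main weight: the identity $\widetilde{\partial_v P}(\cdot)=\ti P(\cdot,v)$, the expansion of the second moment over random directions into $\Pr_{h,h'}\bigl[\ti P_i(h,\cdot)=\ti P_i(h',\cdot)\ \forall i\in S\bigr]$, its evaluation through $a_k(\ti P_{i_0})$ and Fact~\ref{biasrank}, Markov together with $a_k\le r_k$ (Remark~\ref{leq}), and the union bound all check out. One bookkeeping slip: even with $d_1=\dots=d_c=d$, the case $\bar a\ne 0$ is not automatically ``harmless with no condition on the directions.'' If the degree-$d$ parts of the $P_i$ are linearly dependent, the $d$-form $\widetilde{\sum_i a_iP_i}$ vanishes, the leading form of the combination drops to level $d-1$, and there it equals the $(d-1)$-form of $P_{\bar a}$ (either $0$ or $\widetilde{P_{\bar a}}$, of rank $\ge\rho$ by the definition of collection rank) \emph{plus} the random slices; so it belongs to your ``mixed'' case, and the union bound must run over all $(\bar a,\bar b,\bar c)$, fewer than $q^{3c}$ tuples. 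The correct stratification is by $e^*=\max\bigl(\deg P_{\bar a},\,\max\{d_i-1:(b_i,c_i)\ne(0,0)\}\bigr)$: at level $e^*$ the form is exactly $[\widetilde{P_{\bar a}}$ or $0]$ plus slices, since the higher-order terms of the $\Delta_{v_i}$'s live strictly below level $d_i-1$; with that modification your estimate applies verbatim, so this part is fixable inside your framework.

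The genuine gap is the sentence ``recover the statement over other fields by the transfer technique used elsewhere in the paper.'' Fact~\ref{der} is used in the proof of Theorem~\ref{fibers}(2),(3) over the algebraic closure of an arbitrary field, with nc-rank over that closure in both hypothesis and conclusion. The transfer actually used in the paper (Lang--Weil counting plus the first-order transfer from algebraic closures of finite fields to all algebraically closed fields) moves point-counting statements; here both hypothesis and conclusion are rank statements over the ground field, and the behaviour of rank under extension of scalars is precisely the open Conjecture~\ref{d}. Concretely, your random-directions argument over $\FF_{q'}$ controls $\FF_{q'}$-linear combinations and $\FF_{q'}$-rank, but the application needs directions simultaneously good for all $\bar K$-combinations with respect to $\bar K$-rank; the good directions you produce depend on $q'$, and for an $\FF_{q'}$-point ``bad over $\overline{\FF}_p$'' is neither implied by nor usefully bounded through ``bad over $\FF_{q'}$'' (a low-rank decomposition over the closure need not descend), so neither a union over finite subfields nor a constructibility argument for the $\overline{\FF}_p$-bad locus closes the step without importing rank-versus-extension results that are not among the paper's quoted facts. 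You must either restrict your statement to finite fields (insufficient for its use in Theorem~\ref{fibers}) or add a genuine input about the stability of partition/analytic rank under field extension; the source \cite{kz-singular} avoids the issue by arguing algebraically over the given field from the start.
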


(2) Since (as follows from $(1)$) varieties  $\bF _{\bar t}(\bar P)$ are irreducible complete intersections, to show that schemes  $\bF _{\bar t}(\bar P)$ are reduced  
it is sufficient 
to show that they are reduced at the generic point, and to show the normality of $\bF _{\bar t}(\bar P)$ it is sufficient 
to show that they are non singular in codimension $1$.

To show that a variety $\bF _{\bar t}(\bar P) $ is reduced at the generic point it is sufficient to show the existence 
of vectors $v_1,\dots ,v_c\in V$ such that $ \partial P_i/\partial _{v_i} (x) \not \equiv 0 , x\in \bF _{\bar t}(\bar P) $.

For $\bar v \in V^c$ we define the collection $\bar R(\bar v)$ by  $\bar R(\bar v)= (\bar P, \partial P_i/\partial _{v_i})_{ 1\leq i\leq c} $. Let $\bar d'$ be the degree of the collection $\bar R$. 
As follows from Claim \ref{der}  in the case when $nc$-rank of $\bar P$ is greater than $\ti r(r_{\bar d'}, \bar d) $ 
we can find $\bar v \in V^c$ such the $nc$-rank of $\bar R(\bar v) $ is greater than $r_{\bar d'} $. We fix such $\bar v$ and write $\bar R:= \bar R(\bar v)$.  We consider 
$\bar R$ as a morphism $\bar R: \bV \to \bA^c \times \bA^c$.

As follows from the part (1) we have
 $\dim (\mathbf R _{\bar t,\bar s})= \dim(V)-2c$ for all $\bar t,\bar s \in \bA ^c$.

Let $\bZ \subset \bA ^c$ be the subvariety of $\bar s=(s_1,\dots, s_c)$ such that $s_1s_2\dots s_c=0$ and  
$\bY = \bar R^{-1}(\bar t\times \bZ)\subset \bF _{\bar t}$. Since  $\dim (\mathbf R _{\bar t,\bar s})= \dim(V)-2c$ for all $\bar t,\bar s \in \bA ^c$ we see that $\dim(\bY) = \dim (V)-c-1< \dim (\bF _{\bar t})$. Since the scheme  $\bF _{\bar t} $ is reduced at any $x\in \bF_{\bar t} \setminus \bY$ we see that the scheme $\bF _{\bar t} $ is reduced.

\ms

The proof of $(3)$ is completely analogous. We choose $v_i,w_i\in V$, $1\leq i\leq c$ such that the rank of the collection  
$\bar Q= (\bar P, \partial P_i/\partial _{v_i}, \partial P_i/ \partial _{w_i})_{ 1\leq i\leq c} $ is greater than  $\ti r(r_{\bar d"}, \bar d) $ where $\bar d"$ be the degree of the collection $\bar Q$. We consider $\bar Q$ as a morphism $\bar Q:\bV \to \bA ^c\times \bA ^c\times \bA ^c $. As follows from $(1)$ the fibers of the morphism 
$\bar Q$  are of dimension $\dim(V)-3c$.

Let $\bY ':= \bar Q^{-1}(\bar t \times \bZ \times \bZ)\subset \bF _{\bar t}$. Since  $\dim (\mathbf R _{\bar t,\bar s})=\dim(V)-2c$ for all $\bar t,\bar s \in \bA ^c$ we see that $\dim(\bY) = \dim (V)-c-2= \dim (\bF _{\bar t})-2$. Since the scheme  $\bF _{\bar t} $ is smooth  at any $x\in \bF _{\bar t} \setminus \bY '$ we see that the scheme $\bF _{\bar t} $ is normal.

\ms

For the proof of part $(4)$ we fix a generic subspace $\bW\subset \bV$ of dimension $c$. Since 
$\dim ( \bF _{\bar t}(\bar P))=n-c$ it follows from the B\'ezout theorem that 
$| \bF _{\bar t}(\bar P)\cap \bW(\mC)|= \prod _id_i$ is odd.
Therefore there exists a point $x\in  \bF _{\bar t}(\bar P)\cap \bW(\mC) $ which is invariant under complex conjugation. But then $x\in  \bF _{\bar t}(\bar P)\cap \bW(\mR) $.
\end{proof}

\begin{corollary}\label{real} 
For any infinite field  $K$, $K$-vector spaces $V_i,1\leq  i\leq d $ 
and a collection    $\bar P= (P_1,\ldots ,P_c)$ of polylinear functions $P_i: V_1\times \dots \times V_d\to K$ of degrees $\bar d=(d_1, \ldots, d_c)$ and rank $\geq r_{\bar d}$, we have $\bF _{\bar t}(\bar P)(K)\neq \emp$ for  $\bar t\in K^c$.
\end{corollary}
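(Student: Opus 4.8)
\textbf{Proof proposal for Corollary \ref{real}.}

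The plan is to reduce the statement over an arbitrary infinite field $K$ to the already-established case of an algebraic closure of a finite field, exactly as in the proof of Theorem \ref{fibers}, and then to invoke part (1) of that theorem together with a dimension count. First I would observe that since $\bar P$ consists of polylinear (hence multilinear) forms of rank $\geq r_{\bar d}$, and by Remark \ref{pd}(1) the rank and nc-rank are comparable in the relevant range (or, more simply, since the functions $r_{\bar d}$ are chosen so that Theorem \ref{uniform} applies), the fibers $\bF_{\bar t}(\bar P)$ are geometrically irreducible complete intersections of dimension $n-c = \dim(V)-c$ over the algebraic closure $\bar K$, by Theorem \ref{fibers}(1). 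In particular $\bF_{\bar t}(\bar P)$ is a nonempty $\bar K$-variety; the content of the corollary is that it has a $K$-point.

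The key step is to produce a $K$-rational point on this geometrically irreducible variety. Here I would use that $K$ is infinite together with the fact that $\bF_{\bar t}(\bar P)$ is defined over $K$ and geometrically irreducible of positive codimension: a geometrically irreducible $K$-variety of dimension $\geq 1$ that is defined over an infinite field $K$ need not automatically have a $K$-point in general, so instead I would exploit the special multilinear structure. The natural approach: fix indices so that $V = V_1\times\cdots\times V_d$, and regard the last factor $V_d$ as the variable $y$ with the other coordinates $x=(x_1,\dots,x_{d-1})$ as parameters. For fixed generic $x$, each $P_i(x,\cdot)$ is a \emph{linear} functional in $y$; provided these $c$ linear functionals on $V_d$ are linearly independent and the affine conditions $P_i(x,y)=t_i$ are consistent, the fiber over $x$ is a nonempty affine subspace of $V_d$, which of course has a $K$-point. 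So it suffices to find $x\in (V_1\times\cdots\times V_{d-1})(K)$ for which the linear map $y\mapsto (P_1(x,y),\dots,P_c(x,y))$ from $V_d$ to $K^c$ is surjective; this is an open condition on $x$ cut out by non-vanishing of certain $c\times c$ minors (polynomial in $x$), and it is non-empty over $\bar K$ precisely because $r(\bar P)\geq r_{\bar d}\geq c$ forces the generic such map to have full rank (a low-rank obstruction would contradict the rank hypothesis). Since $K$ is infinite, a non-empty Zariski-open subset of an affine $K$-space has a $K$-point, so such an $x$ exists over $K$, and then the fiber over $(x,t)$ contains a $K$-point.

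The main obstacle I anticipate is justifying cleanly that the rank hypothesis $r(\bar P)\geq r_{\bar d}$ forces the generic specialization $y\mapsto \bar P(x,y)$ to be surjective onto $K^c$: one must rule out that for every choice of $x$ the image lies in a proper subspace of $K^c$, and translate this into a statement that some non-trivial combination $P_{\bar a}$ has small rank. Concretely, if for all $x$ the vectors $(P_i(x,\cdot))_{i=1}^c$ were linearly dependent in $(V_d)^\vee$, then (working over $\bar K$ and using that the locus where a fixed dependency holds is Zariski-closed, together with irreducibility) some fixed $\bar a\neq 0$ would satisfy $P_{\bar a}(x,y) = 0$ for all $y$ and all $x$, i.e. $P_{\bar a}\equiv 0$ as a multilinear form in the last variable, which makes $r(P_{\bar a})$ bounded in terms of $d$ and $c$ alone — contradicting $r(\bar P)\geq r_{\bar d}$ once $r_{\bar d}$ is large enough (which it is, being $\alpha_d(c+2)$). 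I would spell this argument out, noting as in the proof of Theorem \ref{fibers} that it suffices to check the genericity claim over algebraic closures of finite fields and then transfer by the model-theoretic completeness of $\mathrm{ACF}$. Finally, the consistency of $P_i(x,y)=t_i$ for surjective linear part is automatic, so $\bF_{\bar t}(\bar P)(K)\neq\emp$ follows.
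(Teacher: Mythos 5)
Your overall strategy coincides with the paper's: fix $\bar t$, slice off one tensor factor so that the remaining system is affine--linear over $K$, find a "good" $K$-point of the parameter space using that $K$ is infinite, and then solve the linear system over $K$. The gap is in the step that produces the good parameter. You claim: if for every $x$ the functionals $P_1(x,\cdot),\dots ,P_c(x,\cdot)\in (V_d)^\vee$ are linearly dependent, then by Zariski-closedness of each locus $\{x: P_{\bar a}(x,\cdot)\equiv 0\}$ and irreducibility of the $x$-space, some \emph{fixed} $\bar a\neq 0$ satisfies $P_{\bar a}(x,\cdot)\equiv 0$ for all $x$. This inference is invalid for $c\geq 2$: the dependency may vary with $x$, and the closed loci are indexed by the infinite set $\mP^{c-1}(\bar K)$, so irreducibility gives nothing (an irreducible variety can be covered by infinitely many proper closed subsets). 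A toy counterexample to the principle you invoke: $v_1(x)=(1,x)$, $v_2(x)=(x,x^2)$ are linearly dependent for every $x$, yet no fixed nontrivial combination vanishes identically. What everywhere-dependence actually yields is a relation $\sum_i a_i(x)P_i(x,y)\equiv 0$ with \emph{polynomial} coefficients $a_i(x)$ not all zero, and this does not by itself contradict $r(\bar P)\geq r_{\bar d}$; so the nonemptiness of your open locus of $x$ with surjective specialization is not established, and this is exactly the point where the rank hypothesis has to enter through a real argument.

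The paper sidesteps this by working with the given fiber rather than with generic surjectivity: by Theorem \ref{fibers}(1), $\bF_{\bar t}(\bar P)$ is geometrically irreducible of dimension $\dim (V)-c$; projecting it to $\bW=\bV_2\times\cdots\times\bV_d$, the locus $\bY$ of $w$ for which the slice $\bZ(w)$ (cut out by $c$ equations linear in $v_1$) has dimension $<\dim(V_1)-c$, i.e.\ is empty, is contained in a proper closed subset of $\bW$; since $K$ is infinite one picks a $K$-point $w\notin \bY$, and then $\bZ(w)$ is a nonempty affine subspace defined over $K$, hence has a $K$-point. If you want to keep your stronger "generic surjectivity" route, the degeneracy scenario can instead be excluded via Theorem \ref{uniform} over finite fields (if the specialized linear map had rank $<c$ for every $x$, the fiber over $\bar 0$ would contain at least $q^{\dim(V)-c+1}$ points, violating uniformity) and then transferred to all algebraically closed fields by the ACF completeness argument you mention; but as written, the "fixed dependency" step is a genuine gap.
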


\begin{proof} We fix   $\bar t\in K^c$ and for
 $w\in \bW:= \bV _2\times \dots \times \bV _d $ define 
$\bZ (w)=\{ v_1\in V_1|(v_1,w)\in \bF _{\bar t}(\bar P) \}$. Let $\bY \subset \bW$ be the subvariety of $w\in \bW $ such that $\dim (\bZ (w))<\dim (V_1)-c$. As follows from 
Theorem \ref{fibers} the closure $\bar \bY \subset \bW$ of  $\bY $ is a proper closed subset. Since $K$ is infinite we have $(\bW \setminus \bY )(K)\neq \emp$. For any $w\in \bW \setminus \bY $ the subset $\bZ (w)$ is defined by a system of linear equations in $V_1$. Since 
 $\bZ (w)\neq \emp$ we see that  $\bZ (w)(K)\neq \emp$. 
\end{proof}


\section{Universality of high rank polynomial mappings}
\begin{definition} A collection $\bar P =(P_i)_{1\leq i\leq c}$ of degree $\bar d=(d_1,\dots ,d_c)$ of polynomials on a $K$-vector space $V$
is {\it m-universal} if for any collection $\bar Q =(Q_i)_{1\leq i\leq c} $ 
of polynomials $Q_i \in K[x_1,\dots ,x_m] $  of degrees $\leq d_i$ there exists an affine map 
$\phi :K^m\to V$ such that $ Q_i= \phi ^\star ( P_i)$ for all $1\leq i\leq c$.
\end{definition}

\begin{theorem}[Universal]\label{universal} There exists a function $R(\bar d,m)$ such that any collection of polynomials $\bar P=(P_1, \ldots, P_c)$  of degrees $\bar d=(d_1, \ldots, d_c)$ and 
$nc$-rank $ \geq R(\bar d,m) $ is $m$-universal, where $K$ is a field
 which is either finite or  algebraically closed.
\end{theorem}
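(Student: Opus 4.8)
The plan is to reduce the statement to a counting statement over finite fields and then use the model-theoretic transfer already employed for Theorem \ref{fibers}. First consider the case $k=\FF_q$. Given a target collection $\bar Q=(Q_1,\dots,Q_c)$ with $Q_i\in k[x_1,\dots,x_m]$ of degree $\le d_i$, I want to count affine maps $\phi:k^m\to V$ with $\phi^\star(P_i)=Q_i$ for all $i$. An affine map $\phi$ is a point of the affine space $W:=V^{m+1}$ (the images of $0$ and of the $m$ standard basis vectors), of dimension $N:=(m+1)\dim V$. The condition $\phi^\star(P_i)=Q_i$ is a system of polynomial equations on $W$: writing $\phi^\star(P_i)-Q_i$ as a polynomial in $x_1,\dots,x_m$ with coefficients that are polynomials in the coordinates of $W$, we get one equation per monomial of degree $\le d_i$ in $m$ variables, i.e. $M_i:=\binom{m+d_i}{m}$ equations of degree $\le d_i$ in the $W$-coordinates, for a total of $M:=\sum_i M_i$ equations. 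Call this collection of polynomials on $W$ by $\bar R=\bar R(\bar P,\bar Q)$, and let $\bF(\bar Q)\subset \bW$ be its zero locus. The theorem for finite fields follows if I show $\bF(\bar Q)(k)\ne\emptyset$, and in fact I will show $|\bF(\bar Q)(k)|=q^{N-M}(1+o(1))>0$ once the $nc$-rank of $\bar P$ is large enough, uniformly in $q$.

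The heart of the argument is therefore: \emph{if $r_{nc}(\bar P)$ is sufficiently large (as a function of $\bar d$ and $m$ only), then $r_{nc}(\bar R)$ is large} — large enough to invoke Theorem \ref{uniform}, which then gives $|\nu_{\bar R}(\bar 0)-q^{-M}|\le q^{-(M+2)}$, hence $\bF(\bar Q)(k)\ne\emptyset$. To control $r_{nc}(\bar R)$ I argue by contradiction: if some nonzero linear combination $\sum_{i,\mu} a_{i,\mu}\,(\phi^\star(P_i)-Q_i)_\mu$ (sum over the monomials $\mu$) had small $nc$-rank as a polynomial on $W$, I need to produce a nonzero linear combination of the $P_i$ of small $nc$-rank on $V$, contradicting the hypothesis. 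The key observation is that restricting along the diagonal-type embeddings $V\hookrightarrow W$ (sending $v$ to a carefully chosen affine map, e.g. $x\mapsto x_1 v$ composed with linear changes of the $x_j$) recovers the $P_i$ (or their derivatives / the forms $\ti P_i$) from $\phi^\star(P_i)$, while killing the $Q_i$-part because pulling back a polynomial in $m$ variables along a suitable substitution specializing all but one variable can be made to reduce degree or vanish. More robustly: a linear combination $\sum a_{i,\mu}(\cdot)_\mu$ with at least one $a_{i,\mu}\ne 0$ with $\mu$ a top-degree monomial of the $P_i$ of maximal degree $d:=\max d_i$ has, as its top-degree part on $W$, essentially a sum of the multilinear forms $\ti P_i$ pulled back by linear maps $W\to V^d$; since $nc$-rank equals the rank of this multilinear form and is non-increasing under linear pullback and under taking linear combinations of coordinates with the same degree, smallness of $r_{nc}(\bar R)$ forces smallness of $r_{nc}(\bar P)$. (If instead only $a_{i,\mu}$ with $\mu$ of lower degree are nonzero, one peels off the top and iterates on derivatives, using Fact \ref{der} to keep enough rank when differentiating — this is exactly the device used in the proof of parts (2),(3) of Theorem \ref{fibers}.) I expect this rank-transfer step to be the main obstacle: making the bookkeeping of "which monomial of $P_i$ survives which specialization" precise, and ensuring the function $R(\bar d,m)$ one extracts depends only on $\bar d$ and $m$, not on $q$ or $\dim V$.

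Finally, the transfer to an arbitrary algebraically closed field $K$ is immediate by the method already used in Claim \ref{fiber}: for fixed $\bar d$ and $m$ (hence fixed number $M$ of equations and fixed rank threshold $r_{\bar d'}$ for the collection $\bar R$), the statement ``for all $\bar P$ of degrees $\bar d$ with $r_{nc}(\bar P)\ge R(\bar d,m)$ and all $\bar Q$ of degrees $\le d_i$ in $m$ variables, $\bF(\bar Q)\ne\emptyset$'' is a first-order sentence in the theory of algebraically closed fields — here one uses that $nc$-rank $\ge R$ is a first-order condition (bounded existential/universal quantification over the coefficients witnessing, or failing to witness, a low-rank decomposition of each $P_{\bar a}$) and that $\bF(\bar Q)$ having a $K$-point is expressible by $\exists$ over $W$. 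Since it holds over every $\bar\FF_p$ by the finite-field case (pass to the union of the $\FF_{q^l}$, exactly as in Section 3), it holds over every algebraically closed field. For $K$ finite we have already proved it directly. This completes the proof.
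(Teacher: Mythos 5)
Your proposal follows essentially the same route as the paper: encode the condition $\phi ^\star (\bar P)=\bar Q$ as a system of coefficient polynomials $\bar R$ on the space of affine maps $\Phi$, show that high nc-rank of $\bar P$ forces high nc-rank of $\bar R$, deduce nonemptiness of the relevant fiber from Theorem \ref{uniform} over finite fields, and pass to algebraically closed fields by the same transfer used for Claim \ref{fiber}. The rank-transfer step you rightly single out as the main obstacle is exactly the point the paper does not prove either but imports as Claim 3.11 of \cite{kz}; your sketch of it (specializing along affine embeddings of $V$ into $\Phi$ and using that rank does not increase under restriction, with derivatives and Fact \ref{der} to handle lower-degree coefficients) is the correct idea, though, as you acknowledge, its bookkeeping is left incomplete rather than carried out.
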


\begin{proof} To simplify notations we only outline the proof in  the case when $c=1$. Let $\Phi$ be the vector space of affine maps $\phi :K^m\to V$   and $L$ be the 
vector space of polynomials $Q\in K[x_1,\dots ,x_m] $ of degree $\leq d$. Choose 
a basis $\lambda _j, j\in J$  of the dual space to $L$. For any polynomial $P$ 
of degree $d$ on $V$ we define the map $ \bar R_P:\Phi \to K^J$  by 
$\bar R_P(\phi )=( \bl _j(\phi ^\star (P)))_{j\in J}$. We have to show the surjectivity of the map $ \bar R_P$.
This surjectivity follows immediately from the following result (see Claim 3.11 in \cite{kz}), and Fact \ref{Main}.

\begin{fact} For any $r$ there exists $h(r,d,m )$ such that the nc-rank of $\bar R_P$ is $\geq r$ for any polynomial $P$ on $V$ of nc-rank $\geq h(r,d,m ) $.
\end{fact}
\end{proof}

\begin{corollary}\label{odd} If $K=\mR$ and all the  degrees $d_i$, $1\leq i\leq c$ in a collection $\bar P$ are odd then under the conditions of Theorem \ref{universal} the collection $\bar P$ is {\it m-universal}.
\end{corollary}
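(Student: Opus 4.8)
The plan is to reduce, exactly as in the proof of Theorem \ref{universal}, to a statement about real points of the fibres of an auxiliary high--rank collection of \emph{odd}--degree polynomials, and then quote Theorem \ref{fibers}(4). To lighten notation I will describe the case $c=1$; the general case is identical. Keep the notation of the proof of Theorem \ref{universal}: $\Phi$ is the space of affine maps $\phi\colon\mR^m\to V$, $L$ is the space of polynomials in $x_1,\dots,x_m$ of degree $\le d$, $\lambda_j$, $j\in J$, is a basis of $L^\star$, and $\bar R_P\colon\Phi\to\mR^J$ is $\bar R_P(\phi)=(\lambda_j(\phi^\star(P)))_{j\in J}$. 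By the definition of $m$--universality, $\bar P$ is $m$--universal over $\mR$ precisely when $\bar R_P$ is surjective on $\mR$--points, i.e. when $\bF_{\bar q}(\bar R_P)(\mR)\neq\emp$ for every $\bar q\in\mR^J$; so it suffices to prove the latter.

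The key point I would establish is that $\bar R_P$ is itself a collection of polynomials on $\Phi$ \emph{all of degree exactly $d$}, hence all of odd degree. Writing $\phi(x)=u_0+\sum_{k=1}^m x_ku_k$ with $u_0,\dots,u_m\in V$, the component of $\bar R_P$ indexed by a monomial $x^\alpha$, $|\alpha|\le d$, is the coefficient of $x^\alpha$ in $P(\phi(x))$, a polynomial of degree $\le d$ in the coordinates of $\Phi$; its homogeneous degree--$d$ part comes only from the top--degree part of $P$, and equals a nonzero rational multiple of $\ti P(u_0,\dots,u_0,u_1,\dots,u_1,\dots,u_m,\dots,u_m)$, with $u_0$ repeated $d-|\alpha|$ times and $u_k$ repeated $\alpha_k$ times as arguments of the symmetric $d$--linear form $\ti P$. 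Specialising all the $u_k$ to one vector $u_0=\dots=u_m=u$ turns this expression into a nonzero scalar multiple of $\ti P(u,\dots,u)$, which is $d!$ times the value at $u$ of the top--degree part of $P$ and hence not identically zero (that $\mathrm{char}(\mR)=0$ is what keeps the relevant binomial coefficients invertible); therefore the degree--$d$ part of the component is a nonzero polynomial on $\Phi$, and the component has degree exactly $d$. In the general case this shows that the $(i,\alpha)$--component of $\bar R_{\bar P}$ has degree $d_i$, so the degree vector $\bar d'$ of $\bar R_{\bar P}$ consists of odd numbers.

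It then remains to apply Theorem \ref{fibers}(4) to the collection $\bar R_{\bar P}$. That theorem asks for the $nc$--rank of $\bar R_{\bar P}$, computed over the algebraic closure $\mC$, to be at least $r_{\bar d'}$ (I interpret the rank hypothesis of Theorem \ref{universal}, as in Theorem \ref{fibers}, as a condition on the $nc$--rank over $\mC$). By the Fact used in the proof of Theorem \ref{universal} (Claim 3.11 in \cite{kz}) there is a function $h$ with $r_{nc}(\bar R_{\bar P})\ge r_{\bar d'}$ whenever $r_{nc}(\bar P)\ge h(r_{\bar d'},\bar d,m)$; after enlarging $R(\bar d,m)$ to exceed this value, the hypotheses of Theorem \ref{fibers} hold for $\bar R_{\bar P}$, all of whose degrees are odd. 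Theorem \ref{fibers}(4) then yields $\bF_{\bar q}(\bar R_{\bar P})(\mR)\neq\emp$ for every $\bar q\in\mR^J$, which is precisely the $m$--universality of $\bar P$ over $\mR$.

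I expect the only genuinely delicate step to be the degree computation of the second paragraph --- verifying that the top--degree coefficients never vanish identically in the coordinates of $\Phi$, which is where the symmetry of $\ti P$ (allowing the reduction to the diagonal) and the vanishing of the characteristic are used. Everything else is bookkeeping that combines the reduction from the proof of Theorem \ref{universal}, the rank--transfer Fact, and Theorem \ref{fibers}(4).
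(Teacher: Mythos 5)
Your proposal is correct and follows essentially the same route as the paper's own (very terse) proof, which just observes that all components of $\bar R_{\bar P}$ have the same odd degrees $d_i$ and then invokes the real-points statement of Theorem \ref{fibers} for odd-degree, high-rank collections (cited there as part (2), evidently a slip for part (4)). Your added details --- the polarization computation showing each component of $\bar R_{\bar P}$ has degree exactly $d_i$, and the rank transfer via Claim 3.11 of \cite{kz} after enlarging $R(\bar d,m)$ --- are exactly what the paper leaves implicit.
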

\begin{proof} In this case the polynomials in $\bar R_{\bar P}$ are of the same degrees and Corollary  \ref{odd} follows from the part $(2)$ of Theorem \ref{fibers}.
\end{proof}

\section{The universality for number fields}

Let $K$ be a number field.

\begin{theorem}\label{main0} For any  $m,\bar d=(d_1, \ldots, d_c)$ there exists $r(\bar d,m)$ such that any collection $\bar P: V\to K^c$ of degree $\bar d$ and 
rank $\geq r(\bar d,m) $ is $m$-universal if either $K$ is totally imaginary or all degrees $d_i$ are odd or the collection $\bar P $ consists of polylinear functions.
\end{theorem}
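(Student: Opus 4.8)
The plan is to reduce Theorem \ref{main0} to the already-established universality statements over algebraically closed fields (Theorem \ref{universal}) by a local-global argument, the main new ingredient being a Hasse-type principle for the fibers $\bF _{\bar t}(\bar P)$ when $\bar P$ has high rank. First I would observe that, exactly as in the proof of Theorem \ref{universal}, $m$-universality of $\bar P$ is equivalent to surjectivity of an explicit polynomial map $\bar R_{\bar P}:\Phi \to K^J$, where $\Phi$ is the space of affine maps $K^m\to V$ and $J$ indexes a basis of the space of polynomials of degrees $\le d_i$ in $m$ variables; and by the Fact cited there (Claim 3.11 in \cite{kz}) together with Fact \ref{Main}, the collection $\bar R_{\bar P}$ again has nc-rank $\ge r_{\bar d'}$ once $\bar P$ has rank $\ge r(\bar d,m)$ for a suitable $r(\bar d,m)$. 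So it suffices to prove: for a collection $\bar Q$ of polynomials over a number field $K$ of sufficiently high rank, every fiber $\bF _{\bar t}(\bar Q)$, $\bar t\in K^c$, has a $K$-point — and moreover, for the surjectivity we want, that the fibers over all of $K^J$ are nonempty, which is the same statement applied to the shifted collection $\bar R_{\bar P} - \bar t$.

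Next I would set up the local-global step. By Theorem \ref{fibers}(1), applied over $\bar K$, the scheme $\bF _{\bar t}(\bar Q)$ is a geometrically irreducible complete intersection of dimension $\dim V - c$, and by parts (2)–(3) it is reduced and normal; in particular it is geometrically integral. For such varieties of large dimension one expects a Hasse principle, and here the high-rank hypothesis should force it directly. For each place $v$ of $K$: at the finite places one uses Theorem \ref{Mainp} (the $p$-adic extension of Theorem \ref{uniform} promised in the introduction) to conclude $\bF _{\bar t}(\bar Q)(K_v)\ne\emptyset$ — indeed high rank over $K_v$ means the point-count estimate holds $p$-adically, so the fiber is nonempty over the residue field and lifts by smoothness (normality gives smoothness in codimension $1$, and a generic linear section reduces to that case, just as in the proof of Theorem \ref{fibers}(4)); at the real places one uses exactly the Bézout argument of Theorem \ref{fibers}(4) — a generic $c$-dimensional linear slice meets $\bF _{\bar t}(\bar Q)$ in $\prod_i d_i$ points, an odd number when all $d_i$ are odd, so complex conjugation fixes one of them — while if $K$ is totally imaginary there are no real places to worry about, and in the polylinear case Corollary \ref{real} already gives a $K$-point unconditionally (so that case needs no local-global argument at all, just the rank bound from $\bar R_{\bar P}$). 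Finally, with local points at every place in hand, I would invoke a Hasse principle for high-dimensional complete intersections; the cleanest route is to cut down by a generic linear subspace to reduce $n$ while keeping the rank above the threshold (the rank of a restriction to a generic subspace of codimension $O(1)$ drops by at most $O(1)$, by the same constructibility/genericity reasoning used for Fact \ref{der}), so that one is reduced to a situation — e.g. a smooth complete intersection of small codimension whose dimension is large relative to its degree — where the circle method (Birch's theorem and its relatives) yields the Hasse principle, or alternatively one appeals directly to a strong approximation / fibration argument since the generic fiber of $\bar Q$ over a coordinate hyperplane pencil is again high-rank.

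The main obstacle I expect is precisely this last point: turning "local points everywhere" into "a $K$-point" uniformly in the rank. Theorem \ref{fibers} tells us the fibers are geometrically integral complete intersections, but geometric integrality alone does not give a Hasse principle; one genuinely needs either a circle-method input (which requires the number of variables to be large compared to the degree — fine here, since high rank and the structure of $\bar R_{\bar P}$ let us take $\dim V$ as large as we like relative to $\bar d$ and $m$) or a geometric argument exploiting the very special syzygy structure of high-rank systems. I would try to make this work by proving a clean lemma of the form: \emph{for every $\bar d,n$ there is $r$ such that any collection $\bar Q$ over a number field $K$ of degrees $\bar d$, in $n$ variables, with nc-rank $\ge r$, satisfies the Hasse principle for $\bF_{\bar t}(\bar Q)$}, via a double induction on $c$ and on $\max_i d_i$: peel off one polynomial $P_c$, view $\bF_{\bar t}(\bar Q)$ as fibered over $\bF_{\bar t'}(P_1,\dots,P_{c-1})$, check that high rank is inherited by the sub-collection and by the restriction of $P_c$ to a generic fiber (again by the constructibility argument), and combine the inductive Hasse principle on the base with the one on the fibers using the fact that the fibration has geometrically integral fibers of the expected dimension — i.e. a standard "fibration method" reduction. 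The base case $c=1$, $d=1$ is trivial (linear), and the key quantitative claim that needs to be nailed down is that each of these operations — restriction to a generic linear subspace, passing to a generic fiber of one coordinate — costs only a bounded amount of rank, which is exactly the kind of statement Fact \ref{der} exemplifies.
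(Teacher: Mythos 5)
Your opening reduction (universality of $\bar P$ is equivalent to finding $K$-points of the system $\bar G$ expressing $\phi^\star(\bar P)=\bar Q$ on the space $\Phi$ of affine maps, with high rank transferred to $\bar G$ by the cited claims of \cite{kz}) and your treatment of the archimedean places (nothing needed for totally imaginary $K$, the B\'ezout parity argument when all $d_i$ are odd, Corollary \ref{real} in the polylinear case) agree with the paper. The genuine gap is at the non-archimedean places. Theorem \ref{Mainp} applies only when the reduction $\hat P$ of $\ti P$ modulo $\fp$ has high rank over the residue field; for a fixed collection over a number field this holds at all but finitely many places, and at the remaining bad places (small residue characteristic, denominators, or rank collapse modulo $\fp$) you have no argument at all, while any Hasse-principle statement needs local points everywhere. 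Moreover Theorem \ref{Mainp} counts points of $\mcO/\fp^l$, so even at good places you must produce a smooth point modulo $\fp$ before Hensel lifting, which again uses good high-rank reduction. This is precisely why the paper does not argue place by place with the $p$-adic bias theorem: it cuts $\bX_{\bar G}$ by a generic $K$-subspace of codimension $E+M+1$ to obtain a smooth, geometrically irreducible variety $\bZ$ of bounded degree and dimension at least $E$, and then quotes Theorem 4.2 of \cite{w} for $p$-adic solubility at \emph{every} finite place and Theorem 1.1 of \cite{bh} for the Hasse principle for such complete intersections.

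Two further points. First, to run the circle-method step one needs control of the singular locus (the Birch-type condition), not merely that rank survives a generic linear section: the paper gets $\codim_{\bX}(\bX_{\sing})>E+M$ from the rank transfer to $\bar G$ together with Proposition III$_C$ of \cite{S}, and this is what makes the generic section $\bZ$ smooth and within reach of \cite{bh}; your proposal never supplies this input. Second, your fallback ``fibration method'' induction is not viable as stated: geometric integrality of fibers of the expected dimension does not yield a Hasse principle (one needs the Hasse principle together with weak approximation for the fibers and control of the base), so it cannot substitute for the circle-method input. Your remark that the polylinear case needs no local-global argument, since the induced system is again polylinear of high rank and Corollary \ref{real} applies over the infinite field $K$, is sound and consistent with the paper's use of Corollary \ref{real}.
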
 
\begin{proof}Let $\Phi $ be the space of affine maps from $\bA ^m$ to $V$. 
The condition  $\phi ^\star (\bar P)=\bar Q$ is equivalent to a system $\bar G= \bar G(\bar P, \bar Q)$ of $M=\sum _{i=1}^c\binom {m+d_i}{d_i}$ equations of degrees $\leq d=\max_{1 \le i \le c} d_i$ on $\phi \in \Phi$. Let $\bX \subset \Phi$ be the subvariety defined by the system $\bar G$ and $\bX _{\sing}\subset \bX$ be the singular locus.

Let  $D=d^M$, $E=(D-1)D^{ (2^D)}+1$. As follows from Claim 3.10 in \cite{kz} there exist $\ti r$ such 
the inequality  $r_{\mC}( P)\geq r$ implies  $r_{\mC}(\bar G)\geq D!( E+M)$. By  Proposition III$_C$ of \cite{S} we know that $\codim _{\bX}(\bX _{\sing})>E+M$.  Since the field $K$ is infinite 
there exists a $K$- subspace $\bW \subset \Phi$ of codimension $E+M+1$ such that the intersection $\bZ = \bX \cap \bW$ is non-singular and geometrically irreducible. Since  $\dim(\bW)\geq M+E $ and $\codim _{\Phi}(\bX)\leq M$ we have $\dim(\bZ)\geq E$. Since $\deg(\bZ) = \deg(\bX) \leq d^M$ 
we see that $\bZ$ satisfies the conditions of Theorem 4.2 in \cite{w}. Now Theorem \ref{main0} follows from Theorem 1.1 in \cite{bh}, part (2) of Theorem \ref{fibers} and Corollary \ref{real}.
\end{proof}


\section{Weakly polynomial functions}

We start the next topic with a couple of definitions.

\begin{definition}
\begin{enumerate}
\item Let $K$ be a field, $V$ be a $K$-vector space and $X$ a  subset of $V $. 
A function  $f:X\to K$ is {\em weakly polynomial} of degree $\leq a$, if for any  affine subspace $L\subset X$
the  restriction of $f$ on $L$ is a  polynomial of degree $\leq a$.  

\item A subset $X \subset V$ 
satisfies the condition $\star _a$
if any weakly polynomial function of degree $\leq a$ on $X$ is a restriction  of a polynomial function of degree $\leq a$ on $V$.
\end{enumerate}
\end{definition}

The following example demonstrates the  existence of cubic surfaces $X \subset K ^2$  which do not have the property 
$\star_{1}$ for any field $K \neq \FF _2 $.

\begin{example} Let  $V=K^2$, $Q=xy(x-y)$, and let $X$ be the variety defined by $Q$. Then $X=X_0\cup X_1\cup X_2$ where 
$X_0=\{ v\in V|x=0\}, X_1=\{ v\in V|y=0\} , X_2=\{ v\in V|x=y\} $.
The function $f:X\to K$ such that $f(x,0)=f(0,y)=0,f(x,x)=x$ is weakly linear but one can not extend it to a linear function on $V$.
\end{example}

\begin{definition}\label{adm}  A  field $K$ is {\it e-admissible} for $e\geq 1$  if $K^\star$ contains the subgroup $\mu _e$ of roots of order  $e$.
\end{definition}
\begin{remark} To simplify notations we define $C:=\mu _e$.
\end{remark}

\begin{theorem}[Extension]\label{ext} 
There exists an $S=S(a,d)$ such that any hypersurface $Y\subset V$ of degree $d$ and  $nc$-rank $\geq S$  satisfies the condition $\star _a$ if  $K$ is an $ad$-admissible field which is either finite field or is algebraically closed or is 
a number field 
. 
\end{theorem}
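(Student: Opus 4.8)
The plan is to reduce the statement to a statement about polynomials on the ambient space $V$ via induction on the degree $d$ and the ``weak degree'' bound $a$, using the universality of high-rank maps (Theorem \ref{universal}) together with the fiberwise irreducibility/reducedness from Theorem \ref{fibers}. The starting observation is that a weakly polynomial function $f$ of degree $\leq a$ on $Y = \bX_P$ is, by definition, polynomial of degree $\leq a$ on every affine line $\ell \subset Y$; since $Y$ has high nc-rank it contains many affine subspaces (indeed, by $m$-universality, affine images of $K^m$ for every $m$ up to a bound governed by the rank), and the restriction of $f$ to each such subspace is a genuine polynomial. The goal is to patch these local polynomial descriptions into a single global one.

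First I would set up the induction. For $a = 0$ the statement is that a locally constant weakly polynomial function on an irreducible variety is constant, which follows from irreducibility of $Y$ (Theorem \ref{fibers}(1), via $\star(n,\bar d)$, provided $S \geq r_{\bar d}$). For the inductive step, fix $f$ weakly polynomial of degree $\leq a$ on $Y$. Choose a generic point $y_0 \in Y$ and a generic direction; restrict $f$ to a generic affine subspace $L$ of suitable dimension through $y_0$ contained in $Y$ — here is where $m$-universality is used, to guarantee such $L \cong \bA^m \subset Y$ exists with $m$ large and with the restriction $P|_L$ itself of controlled rank. On $L$, $f|_L$ is an honest polynomial $g$ of degree $\leq a$. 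The main work is to show $g$ extends to a polynomial on $V$ of degree $\leq a$ whose restriction to $Y$ equals $f$. I would first extend $g$ arbitrarily to a polynomial $\tilde g$ of degree $\leq a$ on $V$, replace $f$ by $f - \tilde g|_Y$, and thereby reduce to the case where $f$ vanishes on a large affine subspace $L \subset Y$. Then one shows such an $f$ is divisible, in the weakly-polynomial sense, by a linear form: the ``difference'' construction $\Delta_h f$ drops the degree, and an argument analogous to the reduction steps in the proof of Theorem \ref{fibers}(2)–(3) (choosing directional derivatives $\partial P/\partial_{v_i}$ so the augmented collection stays of high rank, cf. Fact \ref{der}) lets one run the induction on $a$.

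The role of the $ad$-admissibility hypothesis (that $\mu_{ad} \subset K^\star$, i.e. $C = \mu_e$ with $e = ad$) enters precisely in the divisibility/descent step: to argue that a weakly polynomial function vanishing on a hyperplane section $Y \cap \{\ell = 0\}$ is $\ell$ times a weakly polynomial function of degree $\leq a-1$, one needs enough roots of unity to run a Fourier/averaging argument over the cyclic group $C$ acting by scaling the coordinate $\ell$ (this is exactly the obstruction visible in the Example with $Q = xy(x-y)$ over $\FF_2$, where $\mu_2$ is trivial). Concretely, $\sum_{\zeta \in C} \zeta^{-i} f(\zeta \cdot x)$ isolates the degree-$i$ homogeneous part in the $\ell$-direction, and one checks each such piece is again weakly polynomial of degree $\leq a$ on $Y$ provided $Y$ is $C$-stable in the relevant coordinate — which can be arranged after a linear change of variables exploiting universality. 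Finally, once $f$ agrees with a polynomial of degree $\leq a$ on a Zariski-dense union of affine subspaces of $Y$, reducedness and irreducibility of $Y$ (Theorem \ref{fibers}(1),(2)) force the agreement everywhere.

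The main obstacle I anticipate is making the ``extend off a large affine subspace, then descend in $a$'' step uniform in the rank: one must track how the nc-rank of $P$, of $P|_L$, and of the various augmented collections $(P, \partial P/\partial_{v_i}, \ell)$ degrade through each inductive step, and choose $S(a,d)$ at the outset large enough that all finitely many rank demands along the induction are met. This is a bookkeeping problem of the same flavor as the choice of $\tilde r(r,\bar d)$ in Fact \ref{der}, but iterated $a$ times and interleaved with the universality bound $R(\bar d, m)$ and with $m$ itself growing; the existence of such an $S$ is clear in principle precisely because none of these auxiliary functions depends on $n = \dim V$, but assembling the estimate cleanly is the technical heart of the argument. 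A secondary subtlety is the number-field case, where — as in the proof of Theorem \ref{main0} — one cannot argue fiberwise over a finite field directly and must instead invoke the geometric irreducibility and smoothness-in-high-codimension statements together with a Lang–Weil / Bertini-type input to produce the required $K$-rational affine subspaces inside $Y$.
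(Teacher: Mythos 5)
Your proposal assembles several of the right ingredients (universality, roots of unity, induction on $a$, division by a linear coordinate), but it misses the mechanism the paper actually uses, and two of your steps do not work as stated. First, you ask for an affine subspace $L\cong \bA^m$ \emph{contained in} $Y=\bX_P$ such that ``the restriction $P|_L$ is itself of controlled rank''; this is contradictory, since $P$ vanishes identically on any subspace contained in $Y$. What $m$-universality (Theorem \ref{universal}, resp.\ Theorem \ref{main0} over number fields) actually provides is an affine map $\phi:K^m\to V$ with $P\circ\phi=Q_m$ for a prescribed \emph{model} polynomial $Q_m$, so $\mathrm{Im}(\phi)$ is not inside $Y$; rather $Y\cap\mathrm{Im}(\phi)$ is a copy of the model hypersurface $X_{Q_m}$. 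The paper's proof is built around this: (i) it first proves $\star_a$ directly for the explicit model $Q_m(w_1,\dots ,w_m)=\sum_{i=1}^m\mu(w_i)$, $\mu$ the product of the $d$ coordinates, and this is the only place where $ad$-admissibility enters --- the group $H^m\subset\mu_{ad}^{dm}$ acts on $X_{Q_m}$, weakly polynomial functions decompose into $\theta$-eigenspaces, and an eigenfunction vanishing on the distinguished linear subspace $L=W_1^m\cap X$ is shown to vanish identically (Proposition \ref{special}); (ii) it then transports this to an arbitrary high-rank $Y$ through $\phi$. Your attempt to run the root-of-unity averaging on $Y$ itself (claiming $Y$ can be made ``$C$-stable in the relevant coordinate'' after a linear change of variables ``exploiting universality'') fails: a general high-rank hypersurface admits no such equivariance, and universality supplies maps from model spaces into $V$, not automorphisms of $V$ making $P$ scale-equivariant.

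Second, your concluding patching step is a genuine gap: from agreement of $f$ with a polynomial on a Zariski-dense union of affine subspaces of $Y$ you cannot conclude agreement everywhere, because $f$ is merely a set-theoretic (weakly polynomial) function rather than a regular function, so irreducibility and reducedness of $Y$ give no leverage (and over a finite field density is meaningless anyway). This propagation is precisely the content of Proposition \ref{subspace}: if $W\subset V$ is an affine subspace (not contained in $Y$) on which the restriction of $P$ has sufficiently large nc-rank, then any weakly polynomial $f$ of degree $\le a$ on $X$ whose restriction to $X\cap W$ extends to a polynomial on $W$ extends to $V$; its proof is a nontrivial induction on $a$ and on the codimension of $W$, dividing by the coordinate $t$ transverse to $W$ and controlling the family of lines through points of $X\cap W$ (Proposition 4.33 of \cite{kz}). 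Taking $W=\mathrm{Im}(\phi)$, where $P|_W=Q_m$ has high nc-rank by construction, Proposition \ref{special} together with Proposition \ref{subspace} finishes the proof. Your divisibility-by-a-linear-form descent is a reasonable intuition for the latter, but as written it neither identifies the correct hypothesis (high rank of $P$ restricted to a transverse subspace $W$, not a subspace $L\subset Y$) nor supplies the line-counting argument that makes the division well defined, and it mislocates the role of $ad$-admissibility, which belongs to the model case, not to the descent step.
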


\begin{remark}
\begin{enumerate}
\item The main difficulty in a proof of Theorem \ref{ext}  is the non-uniqueness of an extension of $f$ to a  polynomial on $V$ in the case when $a>d$.
\item An analogous statement is true for weakly polynomial functions on subsets  $X_{\bar P}$ where $\bar P$ is a collection of a sufficiently high $nc$-rank.
\end{enumerate}
\end{remark}

\begin{proof} We fix the degree $d$ of $P$.
The proof consists of two steps. We first construct a collection $\bX _m\subset \bV _m$ of hypersurfaces of degree $d$ and nc-rank $\geq  m$ defined over $\mZ$ such that  for all $ad$-admissible fields the subsets $X_m:=\bX_m (K)\subset V_m$ satisfy the condition  $\star _a$ .
In the second step we derive the general case of Theorem \ref{ext}
from this special case.
\subsection{The first step}
We denote by $x^i, 1\leq i\leq d$ the coordinate functions on $\mA ^d$.
\begin{definition} \leavevmode
\begin{enumerate}\item
$\bW:=\bA ^d$ and   $\mu:\bW \to \bA$  is the product $\mu(x^1, \dots ,x^d):= \prod _{j=1}^dx^j$.

\item $ \bV_m:=\bW ^m$ and $Q_m(w_1,\dots ,w_m):= \sum _{i=1}^m \mu(w_i)$,
\item   $\bX _m=\bX _{Q_m}$.
\end{enumerate}
\end{definition}

\begin{proposition}\label{special} \leavevmode
\begin{enumerate}\item $r_{nc}(Q_m)\geq m$.
\item For any $ad$-admissible field $K$ the subvariety $\bX _m (K)\subset \bV _m(K)$ has the property  $\star _a$.
\end{enumerate}
\end{proposition}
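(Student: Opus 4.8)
The plan is to prove the two parts of Proposition~\ref{special} separately.

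\textbf{Part (1).} I would show directly that $Q_m$ has nc-rank at least $m$. Recall $Q_m(w_1,\dots,w_m)=\sum_{i=1}^m\mu(w_i)$ where $\mu$ is the product of the $d$ coordinates on each block $\bW=\bA^d$. Since $Q_m$ is a degree-$d$ form, its associated symmetric multilinear form $\ti Q_m$ on $\bV_m^{d}$ splits as a direct sum over the $m$ blocks: $\ti Q_m=\sum_{i=1}^m\ti\mu_i$, where $\ti\mu_i$ involves only the variables in block $i$. The form $\ti\mu$ attached to $\mu=\prod_{j=1}^dx^j$ is (up to the constant $d!$) the ``fully mixed'' multilinear form $\ti\mu(h_1,\dots,h_d)=\sum_{\sigma\in S_d}\prod_j h_j^{\sigma(j)}$, which is non-degenerate on $\bA^d$. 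A decomposition $\ti Q_m=\sum_{k=1}^r\Phi_k\Psi_k$ with $\deg\Phi_k,\deg\Psi_k<d$ would, after restricting to each block, force the non-degenerate form $\ti\mu_i$ to have rank controlled by how many of the $r$ terms are ``active'' on block $i$; a counting/linear-algebra argument (each block needs at least one term, and a single term $\Phi_k\Psi_k$ restricted to one block can carry only a bounded piece) gives $r\ge m$. Concretely I expect the clean statement: the nc-rank of a direct sum of non-degenerate forms on disjoint variable sets is at least the number of summands, which is a short argument using that setting all-but-one block to zero kills a term $\Phi_k\Psi_k$ unless both factors survive on that block, and surviving on every one of the $m$ disjoint blocks forces $\ge m$ factors. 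This is routine.

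\textbf{Part (2): the core.} I would prove that $\bX_m(K)$ satisfies $\star_a$ for every $ad$-admissible field $K$, by induction on $m$ (with a base case that is essentially a direct computation on the model hypersurface), exploiting the product structure of $\mu$. The key feature is that $\bX_m$ is cut out by $\sum_{i=1}^m\mu(w_i)=0$, a highly symmetric equation containing many explicit affine subspaces: for each block $i$ and each coordinate $j$, setting $w_i^j=0$ forces $\mu(w_i)=0$, so $\bX_m$ contains large coordinate-type affine subspaces and also the ``diagonal-type'' subspaces on which $\mu(w_1)=-\mu(w_2)$ and the rest vanish. Given a weakly polynomial $f$ of degree $\le a$ on $X_m$, I would first use the abundance of affine lines and planes inside $X_m$ to show $f$ is a polynomial of degree $\le a$ in each coordinate separately (``weakly polynomial'' restricted to coordinate lines that lie in $X_m$), then bootstrap to a global polynomial using that two polynomials of bounded degree agreeing on enough of these subspaces must coincide — this is where $ad$-admissibility (enough $e$-th roots of unity, i.e. enough values to interpolate/substitute) is used, to guarantee the relevant subspaces are Zariski-dense in the appropriate directions over $K$ and that multiplicative substitutions $x^j\mapsto \zeta x^j$, $\zeta\in\mu_{ad}$, are available. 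The inductive step would peel off one block: restrict $f$ to the locus $w_m=0$ (a copy of $X_{m-1}$) to get a polynomial extension there by induction, then analyze the difference between $f$ and (a lift of) that polynomial on the fibers where $\mu(w_m)=-\sum_{i<m}\mu(w_i)$, using the $\mu_{ad}$-action on the $w_m$-block to average/symmetrize and conclude the difference extends polynomially as well.

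\textbf{Main obstacle and how I would handle it.} The hard part, flagged in the remark before the proof, is the \emph{non-uniqueness} of a polynomial extension when $a>d$: a function that vanishes on $X_m$ need not be a multiple of $Q_m$ as a polynomial of degree $\le a$ unless one controls the ideal of $X_m$ in low degrees, so ``gluing'' extensions obtained on different pieces of $X_m$ is genuinely delicate. I would address this by working with a \emph{canonical} choice of extension: among all degree-$\le a$ polynomials on $V_m$ restricting to $f$, pick the one of minimal degree / in a fixed complement to the degree-$\le a$ part of the ideal $(Q_m)$, and show this canonical choice is compatible with the inductive peeling (restriction to $w_m=0$) and with the $\mu_{ad}$-symmetrization, so the locally built extensions are forced to agree. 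The admissibility hypothesis enters precisely here, ensuring the symmetrization operators and coordinate-scalings are defined over $K$ and that the fixed-point/interpolation arguments have enough points. Once the model case (Proposition~\ref{special}) is established, the general Theorem~\ref{ext} follows in the announced ``second step'' by using Theorem~\ref{universal} (or the underlying universality/rank machinery) to find, for any high-nc-rank hypersurface $Y$, an affine map realizing its defining polynomial as a pullback of $Q_m$ for suitable $m$, transporting weakly polynomial functions and their extensions back and forth.
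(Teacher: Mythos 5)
Your argument for part (1) does not work as stated. The step ``surviving on every one of the $m$ disjoint blocks forces $\ge m$ factors'' is false: a single product $\Phi_k\Psi_k$ whose factors involve variables from all blocks restricts non-trivially to every block (for instance $\bigl(\sum_i x_i^1\bigr)\bigl(\sum_i x_i^2\bigr)$ in degree $2$, whose restriction to block $i$ is $x_i^1x_i^2$), so no per-block count of ``active terms'' can give a lower bound growing with $m$; additivity of Schmidt rank over disjoint sets of variables is not a formal triviality. The easy correct route --- and the content of the paper's reference to Lemma 16.1 of \cite{S} --- is through the singular locus: if $Q_m=\sum_{k=1}^r\Phi_k\Psi_k$ then the common zero set of all $\Phi_k,\Psi_k$, which has codimension $\le 2r$, is contained in the locus where $Q_m$ and all its first partial derivatives vanish; for $Q_m$ that locus is the set where each block has at least two vanishing coordinates, of codimension exactly $2m$, whence $r\ge m$ (and the same argument applied to $\ti Q_m$ handles the nc-rank).

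For part (2) your plan takes a genuinely different route from the paper, but it does not resolve the difficulty it itself flags. The paper's proof is not an induction on $m$: admissibility provides $C=\mu_{ad}\subset K^\star$, one takes $H\subset C^d$ to be the kernel of the product map (note that only $H$, not all of $C^d$, preserves the equation $\sum_i\mu(w_i)=0$, so your proposed averaging over ``the $\mu_{ad}$-action on the $w_m$-block'' is not even an action on $X_m$), and the finite group $H^m$ acts on $X=X_m$. One then decomposes $\mcP_a^w(X)$ and $\mcP_a(X)$ into $\theta$-eigenspaces for the characters $\theta$ of $H^m$ --- this eigenspace decomposition is where admissibility is really used --- and the whole statement reduces to the rigidity result, quoted from Section 4.2 of \cite{kz}, that a $\theta$-eigen weakly polynomial function vanishing on the single linear subspace $L=W_1^m\cap X$ vanishes identically; that is what defeats the non-uniqueness of extensions when $a>d$. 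In your proposal the corresponding work is carried entirely by the assertion that a ``canonical'' extension (minimal degree, or a fixed complement to the degree-$\le a$ part of the ideal $(Q_m)$) is compatible with restriction to $\{w_m=0\}$ and with symmetrization; no argument is given, and none is apparent --- compatibility of such normal forms with restriction and with a group action is precisely the hard point, and the fibers of the projection of $X_m$ to the first $m-1$ blocks are hypersurfaces $\{\mu(w_m)=\mathrm{const}\}$ of bounded rank, so no fiberwise extension theorem is available to drive the inductive step. As written, the proposal restates the obstacle rather than overcoming it; the missing ingredient is the eigenspace decomposition together with the rigidity-on-$L$ statement (or some substitute for it).
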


\begin{proof} The proof of the inequality $r_{nc}(Q_m)\geq m$ is easy. In the case when $K=\mC$ the result follows immediately from Lemma 16.1 of \cite{S}.

To outline a proof of the second statement we introduce a number of definitions. We fix $m$ and write $X$ instead of $X_m$. Since our field $K$ is $ad$-admissible the group $K^\star$ contains a finite subgroup $C$ isomorphic to $\mZ _{ad}.$
\begin{definition}\label{X} \leavevmode
\begin{enumerate} 
\item $W_1:=\{ (x^1,\dots ,x^d)\in W| x^i=1,i\geq 2\}$.
\item $L:=W_1^m\cap X$. 
\item  $H \subset C^d \subset(K^\star )^d$ is the kernel of the  product map $\mu _C:C^d \to C$. The group $H$ acts on $W$ by $(c^1,\dots ,c^d) (x^1,\dots ,x^d) = (c^1x^1,\dots ,c^dx^d)$.
\item $\Theta$ is the group of characters $\theta :H^m\to K^\star$.
\item We write elements of 
$V_m$ in the form 
$v= (w_1,\dots ,w_m), \ 1\leq i\leq m,\ w_i\in W.$ The group $H^m$ acts on $X_m\subset V_m$ by 
$(h_1,\dots ,h_m) (w_1,\dots ,w_m) = (h_1w_1,\dots ,h_mw_m)$. 
\item $\mcP _a^w(X) \subset k[X] $ is  the subspace of 
 weakly polynomial functions of degree $\leq a$ on $X$.
\item $\mcP _a(X) \subset \mcP _a^w(X) $ is the subspace of functions $f:X\to k$ which are restrictions of polynomial functions on $V$ of degree $\leq a$.
\item For $\theta \in \Theta $, denote by $\mcP _a^w(X)^\theta \subset \mcP _a^w(X) , \mcP _a(X)^\theta \subset \mcP _a(X)$,  the subspaces $\theta$-eigenfunctions. 
\end{enumerate}
\end{definition}
Since $C\subset K^\star$ we have direct sum decompositions 
$\mcP _a^w(X) =\oplus _{ \theta \in \Theta } \mcP _a^w(X)^\theta $ and $\mcP _a(X) =\oplus _{ \theta \in \Theta } \mcP _a(X)^\theta $. Therefore for a proof of Proposition \ref{special} it is sufficient to show the equality 
$\mcP _a^w(X)^\theta = \mcP _a(X)^\theta $ for all $\theta \in \Theta $.

Fix $f\in \mcP _a^w(X)^\theta $. Since  $L\subset V$ is a linear subspace
  the restriction $f|_{L}$ extends to a polynomial on $V$. So (after the subtraction of a polynomial) we may assume that  $f|_{L}\equiv 0$. 
As shown in \cite{kz}  Section $4.2$, any weakly admissible function  $f\in \mcP _a^w(X)^\theta $ vanishing on $L$ is identically $0$. So $f\in \mcP _a(X)^\theta  $.
\end{proof}
\subsection{The second step}

The proof of the general case of Theorem \ref{ext}
is based on the following result. 

\begin{proposition} \label{subspace} There exists a function  $r(d,a)$ such that 
the following holds. Let $K$ be a field which is either finite or 
algebraically closed, $V$ a $K$-vector space, 
$P$ a polynomial of degree $d$ and $W\subset V$ an affine subspace
such that the $nc$-rank of the restriction of $P$ on $W$ is  $\geq r(d,a) $. Then any weakly polynomial function $f$ on $X$ of degree $\leq a$ such that $f _{|X\cap W}$ extends to a polynomial on $W$  of degree $\leq a$  is a restriction of a polynomial of degree $\le a$ on $V$.  
\end{proposition}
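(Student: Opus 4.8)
The plan is to reduce the general statement to the special model case established in Proposition~\ref{special} by transporting along an affine map. The key point is that Theorem~\ref{universal} (universality of high-rank polynomial mappings) lets us map the model hypersurface $\bX_m\subset\bV_m$ affinely into $\bX_P\subset V$: if $r_{nc}(P)$ is large enough, then $P$ is $\dim(\bV_m)$-universal, so there is an affine map $\psi:\bV_m\to V$ with $\psi^\star(P)=Q_m$; composing on the target with arbitrary affine maps of $\bV_m$, one realizes arbitrarily many affine copies of the model picture inside $X_P$. So the strategy is: first choose $r(d,a)$ so large that, via Fact~\ref{der} and the universality machinery, $P$ restricted to $W$ remains of high rank after passing to the relevant derived collections, and also $P$ itself (and $P|_W$) is universal for the dimension $\dim\bV_m$ with $m=m(d,a)$ the threshold from Proposition~\ref{special}.

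Next, I would set up the argument in the same equivariant spirit as in the proof of Proposition~\ref{special}. Given a weakly polynomial $f:X_P\to K$ of degree $\le a$ whose restriction to $X_P\cap W$ extends to a polynomial $g$ of degree $\le a$ on $W$, I first subtract (a polynomial extension of) $g$ so that we may assume $f|_{X_P\cap W}\equiv 0$; the goal becomes to show $f\equiv 0$ on $X_P$. Then I would argue that $f$ is determined by its values on affine subspaces of $X_P$ passing through $W$. Using that $P|_W$ has high $nc$-rank, Corollary~\ref{real} / Theorem~\ref{fibers} guarantees that $X_P\cap W$ is irreducible of the expected dimension and rich in affine lines; combined with the fact that through a generic point $x\in X_P$ there are many affine lines meeting $X_P\cap W$ (again using high rank of $P$, so that the "cone of lines through $x$ inside $X_P$" is large and generically hits $W$), one propagates the vanishing of $f$ from $X_P\cap W$ to all of $X_P$. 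This is exactly the type of "spreading out along lines" argument that underlies the results of \cite{kz} Section~4.2 cited in Proposition~\ref{special}.

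More concretely, I expect the clean way to run the second step is: embed the model situation $(\bV_m,Q_m,W_1^m)$ into $(V,P,\text{line})$ so that, after the universality embedding $\psi$, the affine subspace $W$ corresponds to the linear subspace $L$ of the model and the restriction of $f$ pulls back to a weakly polynomial function on $\bX_m$ vanishing on $L$; Proposition~\ref{special} then forces $\psi^\star f\equiv 0$. Ranging over all admissible affine embeddings $\psi$ (which, by universality, cover $X_P$ because their images sweep out $X_P$ as one varies the affine map on the target), one concludes $f\equiv 0$ on $X_P$, hence $f$ is the restriction of the single polynomial we subtracted at the start. Tracking admissibility is harmless: we simply pass to a field extension containing $\mu_{ad}$ if necessary, prove the identity $f=$ polynomial there, and descend by Galois-invariance since the extension polynomial is unique once we pin down its values on enough of $V$.

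The main obstacle is the second step's control of the rank of $P|_W$ versus that of the auxiliary collections needed to make the "lines through a point" argument work, together with ensuring that the universality embedding can be arranged to send $W$ (an \emph{arbitrary} affine subspace of $V$, not one adapted to $P$) onto the coordinate subspace $L$ of the model; this is where one needs a relative version of universality, proving that a high-rank $P$ whose \emph{restriction} to $W$ is still high-rank admits an affine embedding of the model compatible with the inclusion $W\hookrightarrow V$. Making the bookkeeping of all the rank thresholds ($r(d,a)$ versus $\tilde r$, $R(\bar d,m)$, $h(r,d,m)$, $\alpha_d$) consistent — so that a single choice of $r(d,a)$ suffices — is the technical heart, but each individual ingredient is already available from Fact~\ref{der}, Theorem~\ref{universal}, and Proposition~\ref{special}.
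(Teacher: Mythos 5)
Your plan inverts the paper's logical structure and, as written, is circular. In the paper, Proposition \ref{subspace} is the gluing device that is afterwards \emph{combined} with universality (Theorem \ref{universal}) and the model case (Proposition \ref{special}) to prove Theorem \ref{ext}; it is not itself deduced from them. Two concrete problems with your reduction. First, you need an affine embedding $\psi:\bV_m\to V$ with $\psi^\star(P)=Q_m$ which identifies the model subspace $L$ with the given subspace $W$ — the ``relative universality'' you acknowledge but do not prove. It cannot hold in the form you need: $W$ is an arbitrary affine subspace of $V$, possibly of dimension comparable to $\dim V$ (in the application to Theorem \ref{ext} it is $\mathrm{Im}(\phi)$, but the Proposition allows any $W$ on which $P$ restricts to a high-rank polynomial), whereas $\dim \bV_m=dm$ is bounded in terms of $d$ and $a$; no affine image of $\bV_m$ can be matched with $W$, and at best one can ask that $\psi(L)\subset W$, which is much weaker. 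Second, even granting a family of embeddings whose images sweep out $X_P$, Proposition \ref{special} only gives that each pullback $f\circ\psi$ is the restriction of \emph{some} polynomial on $\bV_m$; it does not give $\psi^\star f\equiv 0$, since the polynomial subtracted to arrange vanishing on $L$ depends on $\psi$, and vanishing of $f$ on $X\cap W$ does not force vanishing of $f$ along a line that merely meets $W$ in one point (a polynomial of degree $\le a$ on a line vanishing at one point need not vanish). So what your argument really assumes is the passage from ``$f$ extends to a polynomial on each member of a family of affine subspaces'' to ``$f$ extends to a single polynomial on $V$'' — and that passage is exactly the content of Proposition \ref{subspace}, the hard point being non-uniqueness of extensions when $a>d$ (see the Remark following the statement of Theorem \ref{ext}).

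The paper's proof is direct and makes no use of universality or of the model $\bX_m$: after subtracting a polynomial one assumes $f|_{X\cap W}\equiv 0$, reduces by induction on the codimension of $W$ to the case where $W$ is a hyperplane, fixes a splitting $V=W\oplus K$ with linear coordinate $t$, and then inducts on $a$. The key construction is $g'=f/t$ on $X\setminus(X\cap W)$, extended to $X$ by assigning to $y\in X\cap W$ the value at $0$ of $p_L(t)/t$ for lines $L\subset X$ with $L\cap W=\{y\}$; the two claims quoted from Proposition 4.33 of \cite{kz} — that a common value $g(y)$ is attained outside a proportion $<1/q$ (resp.\ a lower-dimensional family) of such lines, and that the resulting $g$ is weakly polynomial of degree $a-1$ — are where the high nc-rank of $P|_W$ enters. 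This division-by-$t$ descent in the degree $a$, together with the induction on $\codim W$, is the missing idea in your proposal; if you insist on the universality route you would still have to prove a relative embedding statement (with $\psi(L)\subset W$) \emph{and} supply a gluing argument of this kind, so nothing is gained.
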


\begin{proof} After subtracting a polynomial from $f$ we may assume that  $f| _{X\cap W}\equiv 0$. Using induction on the codimension of $W$ we reduce the Proposition to the case when $W\subset V$ is a hyperplane. We fix a direct sum decomposition $V=W\oplus K$ and denote by $t:V\to K$ the projection. Our proof is by induction on $a$.

The function $g':=f/t$ is defined  on $X\setminus  X\cap W$.
We start with a construction of an extension of  $g'$ to a function $g$ on $X$. Given a point $y\in X\cap W$ consider the set $\mcL$ of lines $L\subset X$ such that  $L\cap W=\{y\}$. Since $f$ is weakly polynomial, the restriction $f_{L}$ is a polynomial $p_L(t)$ vanishing at $0$. We define $g_L(y)$ as the value of $p_L(t)/t$ at $0$.
It is clear that the  following two results  (see Proposition 4.33 of \cite{kz} ) imply the validity of
 Proposition \ref{subspace}.
\begin{claim}Under the conditions of Proposition \ref{subspace},
there exist $g(y)\in K$ such that
\begin{enumerate}
 \item $ | \mcL ^-|/ | \mcL | <1/q$ if $K=\mF _q$ and 
\item $\dim( \mcL ^-)< \dim( \mcL ) $ if $K$ is algebraically closed
\end{enumerate}
where
$ \mcL ^- =\{ L\in \mcL | g_L(y)\neq g(y)\}$. 
\end{claim}
We extend $g'$ to a function on $X$ whose values on $y\in X\cap W$ are equal to $g(y)$.

\begin{claim}
 The function $g:X\to K$ is weakly polynomial of degree $a-1$.
\end{claim}
\end{proof}

Now we can finish the proof of Theorem \ref{ext}. 
Fix $m$ such that $r_{nc}(Q_m) \geq r(d,a) $. Let $R(d,m)$ be as Theorem \ref{universal} or in Theorem \ref{main0} if $K$ is a number field. We claim that  for any admissible field $K$ which is  either finite or algebraically closed, 
any hypersurface $Y \subset V$ of degree $d$ and $nc$-rank $\geq R (d,m) $
satisfies  $\star _a$. Indeed, let 
$f$ be a weakly polynomial function on $X =X_{P}$ of degree $\leq a$ where 
$P:V\to K$ is  a polynomial of degree $d$ of nc-rank $\geq R(d,m) $. 
Since $r_{nc}(P)\geq R(d,m) $
 there exists an affine map $\phi :K^m\to V$ such that $P\circ \phi =Q_m$. It is clear that  the function $f\circ \phi$ 
is  a weakly polynomial function on $X_m$ of degree $\leq a$. Therefore  it  follows from Proposition \ref{special} that the restriction of $f$ on $\text{Im}(\phi)\cap X$ extends to a polynomial on $\text{Im}(\phi)$. It follows now from Proposition 
\ref{subspace} that $f$ extends to a polynomial on $V$. 

\end{proof}

\section{Nullstellensatz}

Let $K$ be a field and $V$ be a finite dimensional $K$-vector space. We denote by $\bV$ the 
corresponding $K$-scheme, and by $\mcP (V)$ the algebra of polynomial functions on $\bV$ defined over $K$.

For  a finite collection  $\bar P = (P_1,\ldots,P_c)$ of polynomials on $\bV$ we denote by $J(\bar P)$ the 
ideal in $ \mcP (V) $ generated by these polynomials, and by $\bX _{\bar P}$ the subscheme of $\bV$ defined by 
this ideal.

Given a polynomial $R \in \mcP(\bV)$, we would like to find out whether it belongs to the ideal 
$J(\bar P)$.  It is clear that the following condition is necessary for the inclusion $R\in J(\bar P)$.

\medskip
(N)   $R(x) = 0$ for all $K$-points $x \in \bX_{\bar P}(K) $.
\medskip

\begin{claim}[Nullstellensatz]\label{Null} Suppose that the field $K$ is algebraically closed  and that the ideal  $J(\bar P)$ is radical.
Then any polynomial $R$ satisfying the  condition $(N)$ lies 
in  $J(\bar P)$. 
\end{claim}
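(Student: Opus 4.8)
This statement is precisely the strong form of Hilbert's Nullstellensatz, and I would deduce it from the weak form by the Rabinowitsch trick. First I would note that since $K$ is algebraically closed the set $\bX_{\bar P}(K)$ is the classical common zero locus $\{x \in K^n : P_1(x)=\cdots=P_c(x)=0\}$, so condition (N) says exactly that $R$ vanishes on this locus.

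The main step is to introduce a fresh variable $y$ and work in $\mcP(V)[y]\cong K[x_1,\ldots,x_n,y]$ with the ideal $I$ generated by $P_1,\ldots,P_c$ together with $1-yR$. I would check that $I$ has no common zero in $K^{n+1}$: at a point where all $P_i$ vanish we have $R=0$ by (N), hence $1-yR=1\neq 0$; and where $1-yR=0$ some $P_i$ is already nonzero. By the weak Nullstellensatz — the single nontrivial ingredient, resting on Zariski's lemma (equivalently Noether normalization), which one simply cites — the ideal $I$ is the unit ideal, so there is an identity $1=\sum_{i=1}^c A_i(x,y)P_i(x)+B(x,y)\bigl(1-yR(x)\bigr)$. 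Substituting $y=1/R(x)$, i.e. passing to the localization $\mcP(V)[R^{-1}]$, annihilates the last summand, and multiplying through by a sufficiently large power $R^N$ to clear denominators yields an identity $R^N=\sum_{i=1}^c \wt A_i(x)P_i(x)$ in $\mcP(V)$; that is, $R^N\in J(\bar P)$.

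Finally, since $J(\bar P)$ is radical by hypothesis, $R^N\in J(\bar P)$ forces $R\in J(\bar P)$, which is what is claimed. The only real obstacle is the weak Nullstellensatz itself; the rest is the routine Rabinowitsch manipulation. Alternatively, one can phrase everything scheme-theoretically: $J(\bar P)$ radical means $\bX_{\bar P}$ is reduced, and a regular function on a reduced scheme of finite type over an algebraically closed field that vanishes at every closed point vanishes identically — which is again exactly this assertion.
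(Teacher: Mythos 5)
Your proof is correct: the Rabinowitsch trick reduces the statement to the weak Nullstellensatz, the substitution $y=1/R$ (after disposing of the trivial case $R=0$) gives $R^N\in J(\bar P)$, and radicality of $J(\bar P)$ then yields $R\in J(\bar P)$. Note that the paper itself offers no proof of this Claim; it is stated as the classical Hilbert Nullstellensatz, recorded only as background before the genuinely new finite-field analogue (Theorem \ref{main-null}), so your argument is exactly the standard one the paper implicitly invokes, and your closing scheme-theoretic reformulation (a regular function on a reduced affine scheme of finite type over an algebraically closed field vanishing at all closed points is zero) is an equally acceptable packaging of the same fact.
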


We will show that the analogous result hold for $k=\mF _q$ if $\bX _{\bP}$ is of high $nc$-rank.

From now on we  fix a degree vector $\bd = (d_1,\ldots,d_c)$ and write
 $D:=\prod _{i=1}^cd_i$. We denote by $\mcP _{\bar d}(\bV)$ the space of  $\bar d$-families of polynomials  $\bP = (P_i)_{i=1}^c$ on $V$ such that $\operatorname{deg}(P_i) \leq d_i$. 

\begin{theorem}\label{main-null} There exists  and an bound $r(\bd)>0$ such that for any finite field $k=\mF _q, a< q/D$ and  any collection $\bar P$ of degrees $\bar d$ and nc-rank $\geq r(\bd) $  the following holds. Any polynomial 
$Q$ on $V$ of degree $a$ vanishing on $\bX_{\bar {P}}(k)$ belongs to the ideal 
$J(\bP)$. 
\end{theorem}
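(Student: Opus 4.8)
The plan is to reduce the Nullstellensatz over $\mF_q$ to the geometric Nullstellensatz (Claim \ref{Null}) over the algebraic closure $\bar k$, using the uniformity of high-rank families to control point counts. First I would recall that by Theorem \ref{fibers}, part (1)--(3), if the $nc$-rank of $\bar P$ over $\bar k$ is at least $r_{\bar d}$ (together with the derivative bounds from Fact \ref{der} needed for reducedness/normality), then $\bX_{\bar P}$ is a reduced, geometrically irreducible complete intersection of dimension $n-c$, so in particular the ideal $J(\bar P)$ is radical in $\bar k[x_1,\dots,x_n]$. Hence by Claim \ref{Null}, over $\bar k$ the condition that $Q$ vanishes on $\bX_{\bar P}(\bar k)$ already forces $Q \in J(\bar P)\otimes \bar k$; and since everything is defined over $k$, a descent argument (flatness of $\bar k$ over $k$, or just linear algebra on the coefficients of a putative expression $Q = \sum_i P_i R_i$ with $\deg R_i \le a - d_i$) gives $Q \in J(\bar P)$ over $k$. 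So the real content is to show that \emph{over the finite field $k$} the hypothesis ``$Q$ vanishes on $\bX_{\bar P}(k)$'' implies $Q$ vanishes on $\bX_{\bar P}(\bar k)$, i.e.\ that $Q$ lies in the \emph{geometric} vanishing ideal.

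The key step is therefore: if $Q$ has degree $a < q/D$ and $Q|_{\bX_{\bar P}(k)} \equiv 0$ but $Q \notin \sqrt{J(\bar P)}$ geometrically, derive a contradiction by counting. Consider the family of polynomials $\bar P' = (\bar P, Q)$, or rather for each scalar $\lambda \in k$ the locus $\{P_1 = t_1, \dots, P_c = t_c, Q = \lambda\}$. By Theorem \ref{uniform} applied to $\bar P$ (which has $nc$-rank $\ge r_{\bar d}$ over $k$ as well, since rank over $k$ dominates rank over $\bar k$ up to the reductions above), the fiber $\bX_{\bar P}(k) = F_{\bar 0}(\bar P)$ has roughly $q^{n-c}$ points, with an error of size $O(q^{n-c-2})$. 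Now the hypothesis says $Q \equiv 0$ on all of $\bX_{\bar P}(k)$. If $Q$ does not vanish on the irreducible variety $\bX_{\bar P}$, then $\bX_{\bar P} \cap \{Q = 0\}$ is a proper closed subvariety, of dimension $\le n - c - 1$; by the Lang--Weil estimate (Claim \ref{W}) it has at most $O(q^{n-c-1})$ points over $k$. But $\bX_{\bar P}(k) \subseteq \{Q = 0\}$ has $\ge q^{n-c} - O(q^{n-c-2})$ points, contradicting the bound once $q$ is large. For small $q$ one must argue more carefully — this is where the bound $a < q/D$ enters: one wants the degree-$a$ polynomial $Q$ to be genuinely constrained on the complete intersection, and the Schwartz--Zippel / Bézout-type control on $|\{Q=0\}\cap \bX_{\bar P}(k)|$ relative to $|\bX_{\bar P}(k)|$ needs $\deg Q$ small compared to the number of rational points per "direction", i.e.\ compared to $q$ divided by the Bézout number $D = \prod d_i$.

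The main obstacle I anticipate is making the point-count argument uniform in $q$ down to all $q > $ (some explicit bound) rather than merely for $q$ large, since the Lang--Weil estimate alone is asymptotic and carries field-dependent constants. The clean way around this is to work directly with the uniformity statement Theorem \ref{uniform} applied to the enlarged family $(\bar P, Q)$ — but $Q$ may have low rank, so instead one slices $V$ by a generic affine subspace of dimension close to $D\cdot(\text{something})$ on which $\bar P$ restricts to a high-rank family (using a restriction lemma in the spirit of Fact \ref{der} / Proposition \ref{subspace}), reducing to a bounded-dimensional situation where $Q$ restricted has degree $a < q/D$ and the number of $k$-points of the complete intersection can be pinned down exactly enough (via Theorem \ref{uniform}) to force $Q$ to vanish identically there; then glue over a covering family of such subspaces, whose union is Zariski-dense, to conclude $Q$ vanishes on $\bX_{\bar P}$. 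Once $Q \in I(\bX_{\bar P}) = \sqrt{J(\bar P)} = J(\bar P)$ (the last equality by radicality from Theorem \ref{fibers}), we are done; the descent from $\bar k$ to $k$ for membership in $J(\bar P)$ is routine linear algebra.
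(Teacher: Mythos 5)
Your overall skeleton is the same as the paper's: assume $Q\notin J(\bar P)$, use reducedness/irreducibility (Theorem \ref{fibers}) to see that $\bX_{\bar P}\cap\{Q=0\}$ is then a proper closed subvariety of dimension $n-c-1$ containing all of $\bX_{\bar P}(\mF_q)$, lower-bound $|\bX_{\bar P}(\mF_q)|$ by roughly $q^{n-c}$ via Theorem \ref{uniform}, and contradict this with an upper bound on the points of the smaller variety using $a<q/D$; the radicality-plus-descent packaging at the end is fine and essentially equivalent to what the paper does implicitly. The problem is the decisive quantitative step, and you identify it yourself but do not close it: you need an upper bound on $|(\bX_{\bar P}\cap\{Q=0\})(\mF_q)|$ that is uniform in $q$ (the theorem is claimed for \emph{every} $q>aD$, with $n$ unbounded because high rank forces $n$ large), and Lang--Weil (Claim \ref{W}) is asymptotic with constants you cannot control here. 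The paper supplies exactly this missing ingredient as Lemma \ref{rb} (Hrushovski's ``rough bound''): for any variety $\bY\subset\bA^n$ of dimension $m$ cut out by polynomials of degrees $e_1,\dots,e_r$ one has $|\bY(\mF_q)|\le (\prod_i e_i)\,q^{m}$, proved by intersecting $\bY$ with generic linear combinations of the Frobenius hypersurfaces $x_j^q-x_j=0$ and applying B\'ezout. Applied to the collection $(\bar P,Q)$ of degrees $(d_1,\dots,d_c,a)$ this gives $|\bY(\mF_q)|\le aD\,q^{n-c-1}<q^{n-c}\frac{q-1}{q}$, which is the contradiction.

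Your proposed workaround for small $q$ --- slicing by affine subspaces on which $\bar P$ stays of high rank and claiming that Theorem \ref{uniform} ``pins down the count exactly enough to force $Q$ to vanish identically there'' --- does not work as stated: Theorem \ref{uniform} only estimates the number of points of the high-rank complete intersection (or of fibers of high-rank families), and $(\bar P,Q)$ need not be high rank, so it gives no upper bound on the number of zeros of $Q$ on $\bX_{\bar P}\cap W$. To force $Q\equiv 0$ on a slice you would again need a Schwartz--Zippel/B\'ezout-type bound for points of a proper subvariety of the slice, i.e.\ precisely the uniform counting lemma that is missing; so the workaround begs the question. With Lemma \ref{rb} in hand, no slicing is needed and the argument is three lines, which is what the paper does.
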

\begin{proof}  Our proof is based on  the following {\it rough bound} (see \cite{hr}).
\begin{lemma}\label{rb} Let $\bar P=(P_i)_{i=1}^c \subset \mF _q[x_1,\dots ,x_n]$ be a collection of polynomials of degrees $d_i,1\leq i\leq c$ such that the variety 
$\bY :=\bX _{\bar P} \subset \bA ^n$ is of dimension $n-c$. Then
 $|\bY (\mF _q)|\leq q^{n-c}D$ where $D:=\prod _{i=1}^cd_i$.
\end{lemma}
For the convenience we reproduce the proof of this result.

\begin{proof} Let $F$  be the algebraic closure of $\mF _q$. Then $\bY (\mF _q)$ is the  intersection of $\bY(F)$ with hypersurfaces $\bY _j, 1\leq j\leq n$ defined by the equations $h_j( x_1,\dots ,x_n)=0$ where 
$h_j( x_1,\dots ,x_n)= x_j ^q-x_j$.

Let $H_1,\ldots,H_{n-c}$ be generic linear combinations of the $h_j$ with algebraically independent 
coefficients from an transcendental extension  $F'$ of $F$ and  $\bZ _1,\ldots ,\bZ _{n-c}\subset \bA ^n$ be the corresponding hypersurfaces. 

Intersect successively $\bY$ with $\bZ _1,\bZ _2,\dots ,\bZ _{n-c}$.   Inductively we see that for each $j \leq n-c$, each component $C$ of the intersection
 $\bY \cap  \bZ _1 \cap \dots \cap \bZ _j$ has dimension $n-c-j$.   Indeed,  passing from $j$ to $j+1$ for $j<n-c$ we have $\dim(C)=n-c-j>0$. So  not all the functions $h_i$ vanish on $C$. Hence by the genericity of the choice of linear combinations $\{H_j\}$ we see that 
$H_{j+1}$ does not vanish on $C$ and therefore $\bZ _{j+1}\cap C$ is 
of pure dimension $n-c-j-1$.    Thus  the intersection
 $\bY \cap  \bZ _1 \cap \dots \cap  \bZ  _{n-c}$ has dimension $0$.  This concludes the induction step.

By B\'ezout's theorem we see that  $|\bY \cap  \bZ _1 \cap \dots \cap  \bZ _{n-c}|\leq q^{n-c}D$. Since $\bY (\mF _q)=\bY (\mF _q) \cap  \bZ _1 (\mF _q)\cap \dots \cap \bZ _n (\mF _q)\subset \bX (\mF _q) \cap  \bY _1 (\mF _q)\cap \dots  \cap \bY _{n-c} (\mF _q) $ we see that 
 $|\bY (\mF _q)|
\leq q^{n-c} D$.
\end{proof}

Now we can finish the proof of Theorem \ref{main-null}. Let $R \in \mF _q[x_1,\dots ,x_n] $ be a polynomial of degree $a$ vanishing on the set $\bX _R(\mF _q)$. Suppose that $R$ does not lie in the ideal generated by the $P_i$, $1\leq i\leq c$. Let  $\bY :=\bX _{\bar P}$. Since $R$ vanishes on 
$\bX _{\bar P}(\mF _q)$ we have $\bY (\mF _q) = \bX _{\bar P}(\mF _q) $.

Since $\bX _{\bar P} $ is irreducible we have $\dim(\bY)=n-c-1$. As follows from Lemma \ref{rb} we have the upper bound
$|\bY (\mF _q)| \leq aD q^{n-c-1}$. On the other hand as follows from Theorem \ref{uniform} there exists an  bound $r(\bd)>0$ such that the condition $r_{nc}(\bar P)\geq r(\bd) $  implies the inequality 
$|\bX _{\bar P}(\mF _q)|> q^{n-c}\frac{q-1}{q}$, which is a contradiction to $q>aD$. 
\end{proof}


\section{$p$-adic bias-rank}\label{section-pbr}
For the last two applications we formulate and  prove an analogue of Claim \ref{Main} for local  non-archimedian fields of characteristic zero.

Let $K/\mQ _p$ be a finite extension of degree $u, \mcO \subset K$ the ring of integers, $\fp \subset \mcO$ the maximal ideal and $q=|\mcO /\fp|$. For  $l\geq 1$ we write $A_l= \mcO /\fp ^l$.

\begin{definition} 
\begin{enumerate}
\item For a map $P:A^n_l\to A_l$ and $a\in A_l$ we write 
$\nu (a):= q^{(1-n)l} |P^{-1}(a)| $.
\item A map $P$ is $s$-uniform if $|\nu (a)-1|\leq q^{-s}$.
\item We denote by $\Xi _l$ the group of additive characters $\chi :A_l\to \mC ^\star$. 
\item For $\chi \in \Xi _l$ we  denote  by 
$d(\chi)$ the smallest number $d$ such that $\chi _{|\fp ^dA_l}\equiv 1$.
\item  $\Xi ^d_l\subset \Xi _l$ is the subset of characters $\chi$ such that $d(\chi)=d$.
\item For $\chi \in \Xi$ we  define $b( P;\chi):= q^{-nl}\sum _{v\in A^n_l}\chi (P(v))= q^{-nl} \sum _{a\in A_l}\nu (a)\chi (a)$.
\item For  $P:A^n_l\to A_l$ of degree $d$, we denote $\ti P:(A^n_l)^d\to A_l$ the multilinear form $\ti P(h_1, \ldots h_d) = \Delta_{h_d}\ldots \Delta_{h_1} P$, where $\Delta_{h} P(x)=P(x+h)-P(x)$. 
\item We write $\hat P$ for the reduction of $\ti P$ mod $\fp$.
\end{enumerate}
\end{definition}

\begin{claim} If $ |b( P;\chi) | < q^{-sd(\chi)} $ for all  $\chi \in \Xi$ 
then  $P$ is $(s-2)$-uniform. 
\end{claim}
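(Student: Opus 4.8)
The plan is to run the inverse Fourier transform on $A_l$, exactly as in the proof of Lemma \ref{equi}, but now keeping careful track of the sizes of the character groups $\Xi_l^d$. By definition
$$\nu(a) = \sum_{\chi \in \Xi_l} b(P;\chi)\,\chi(-a) = 1 + \sum_{\chi \in \Xi_l \setminus \{1\}} b(P;\chi)\,\chi(-a),$$
where the term $\chi = 1$ contributes $b(P;1) = q^{-nl}|A_l^n| = 1$. So $|\nu(a) - 1| \le \sum_{\chi \ne 1} |b(P;\chi)|$, and the task is to bound this sum using the hypothesis $|b(P;\chi)| < q^{-s\,d(\chi)}$.

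\textbf{Key steps.} First I would stratify the nontrivial characters by their depth: $\Xi_l \setminus \{1\} = \bigsqcup_{d=1}^{l} \Xi_l^d$. The point is that a character $\chi$ with $d(\chi) = d$ factors through $A_l / \fp^d A_l \cong A_d$, and the number of such characters is $|A_d| - |A_{d-1}| = q^d - q^{d-1} < q^d$ (with the convention $|A_0| = 1$). Hence
$$\sum_{\chi \ne 1} |b(P;\chi)| = \sum_{d=1}^{l} \sum_{\chi \in \Xi_l^d} |b(P;\chi)| < \sum_{d=1}^{l} q^d \cdot q^{-sd} = \sum_{d=1}^{l} q^{-(s-1)d}.$$
Assuming $s \ge 2$ (so $s - 1 \ge 1$), this is a geometric series bounded by $\sum_{d \ge 1} q^{-(s-1)d} = \frac{q^{-(s-1)}}{1 - q^{-(s-1)}} \le \frac{q^{-(s-1)}}{1 - q^{-1}} \le 2 q^{-(s-1)} \le q^{-(s-2)}$, the last step using $q \ge 2$. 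This yields $|\nu(a) - 1| \le q^{-(s-2)}$, i.e. $P$ is $(s-2)$-uniform, as claimed.

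\textbf{Main obstacle.} The only genuinely delicate point is the bookkeeping on character counts: one must check that a character of $A_l$ of depth exactly $d$ is the same thing as a nontrivial character of the quotient $A_l/\fp^d A_l$ that does not already factor through $A_l/\fp^{d-1}A_l$, so that $|\Xi_l^d| = q^d - q^{d-1}$. This is where the local structure of $\mcO$ enters ($\fp^d A_l / \fp^{d-1} A_l$ has order $q$ by definition of $q = |\mcO/\fp|$, and Pontryagin duality for the finite abelian group $A_l$ matches subgroups with quotient-character-groups). Once that combinatorial fact is in hand, the estimate is the routine geometric summation above; the slack of $2$ in "$s - 2$" is exactly what absorbs the geometric-series constant $\frac{1}{1 - q^{-1}} \le 2$ and the factor $q^d$ versus $q^d - q^{d-1}$.
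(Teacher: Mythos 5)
Your proof is correct and is essentially the argument the paper intends, since its own proof is just the remark that the claim follows "completely analogously" to Lemma \ref{equi}: Fourier inversion on $A_l$ with $\hat\nu(\chi)=b(P;\chi)$, the only new ingredient being the stratification of characters by depth with $|\Xi_l^d|=q^d-q^{d-1}$ and the resulting geometric series. Your bookkeeping (including the implicit, harmless assumption $s\ge 2$, which the slack of $2$ in the conclusion presupposes) is accurate.
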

\begin{proof} The proof is completely analogous to the proof of Lemma \ref{equi}.
\end{proof}

\begin{theorem}\label{Mainp} There exists a function 
 $r(d,s,u)$ such that the following holds: for any  polynomial $P: A_l^n\to A_l $ of degree $d$ such that  the reduction  
$\hat  P: (\mF _q^n)^d\to  \mF _q$ of $\ti P$ is  of rank $\geq r(d,s,u) $,  we have 
$ |b( P;\chi) | < q^{-sd(\chi)}$ 
for all $ \chi \in \Xi _d$.
\end{theorem}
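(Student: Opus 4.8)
The plan is to reduce the $p$-adic statement to the finite-field result, Fact~\ref{Main}, by an approximation/lifting argument on the tower $A_l = \mcO/\fp^l$. First I would fix $\chi \in \Xi_d$, so $d(\chi) = d$, and write the exponential sum $b(P;\chi) = q^{-nl}\sum_{v \in A_l^n} \chi(P(v))$. The character $\chi$ factors through $A_d = \mcO/\fp^d$ via the trace-residue pairing, so $\chi \circ P$ depends only on $P$ read modulo $\fp^d$, and (after summing out the directions in $\fp^{l-d}A_l$) the sum reduces to an exponential sum over $A_d^n$; this is where the exponent $d(\chi)$ enters, since we lose a factor $q^{-nd}$ rather than $q^{-nl}$. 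The key point is then to bound an exponential sum over $\mcO/\fp^d$ against the rank of $\hat P$ over $\mF_q$.

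The main technical device I would use is the standard van der Corput / Weyl-differencing inequality applied $d-1$ times: iterating $\Delta_h$ produces
\[
|b(P;\chi)|^{2^{d-1}} \le q^{-C}\sum_{h_1,\dots,h_{d-1}} \Bigl| \sum_{v} \chi\bigl(\ti P(h_1,\dots,h_{d-1},v) + (\text{lower order in }v)\bigr)\Bigr|,
\]
reducing to a sum whose inner summand is, up to additive shifts, the character applied to the \emph{linear} form $v \mapsto \ti P(h_1,\dots,h_{d-1},v)$ over $A_d$. An inner linear sum over $A_d^n$ is either $q^{nd}$ or has large $\fp$-adic valuation controlled by how degenerate the linear form is modulo $\fp$; quantitatively, the number of $(h_1,\dots,h_{d-1})$ for which this linear form is "small mod $\fp$" is governed by the rank of $\hat P$ over $\mF_q$. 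So high rank of $\hat P$ forces all but a $q^{-\gamma_d(r)}$-fraction of the $h$-tuples to give a genuinely non-degenerate linear form, whose sum then vanishes or is tiny. Collecting the estimate and taking $2^{d-1}$-th roots yields $|b(P;\chi)| < q^{-s d(\chi)}$ once $\mathrm{rank}(\hat P) \ge r(d,s,u)$, with the dependence on $u = [K:\mQ_p]$ entering through the residue field size $q = p^f$ and the ramification, which only affect the constants.

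An alternative, and probably cleaner, route is to invoke Fact~\ref{Main} as a black box at the residue-field level and then lift: reduce $P$ modulo $\fp$ to get a polynomial $P_1$ over $\mF_q$ whose top-degree multilinear form is exactly $\hat P$, hence of rank $\ge r(d,s,u)$; Fact~\ref{Main} makes $P_1$ highly uniform over $\mF_q$. One then propagates uniformity up the tower $A_1 \leftarrow A_2 \leftarrow \dots \leftarrow A_d$ by a fibering argument: each fiber of $A_{j+1}^n \to A_j^n$ is an affine space over $\mF_q^n$ on which $P$ restricts to (essentially) an affine function whose linear part is again controlled by $\hat P$, so equidistribution mod $\fp^j$ upgrades to equidistribution mod $\fp^{j+1}$ with a controlled loss. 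After $d$ steps this gives the bound on $b(P;\chi)$ for every $\chi$ of conductor exactly $d$.

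The hard part will be making the lifting step \ref{section-pbr}-style uniform in $l$ and $u$ while keeping the \emph{multilinear} rank $r_{nc}$ — rather than the ordinary Schmidt rank — as the hypothesis: one must check that reducing mod $\fp$ and then differencing does not destroy the rank, i.e. that $\hat P$ having high $\mF_q$-rank genuinely controls each intermediate linear form, including the cross-terms coming from the non-top-degree parts of $P$ and from the ramification of $\mcO/\fp$. This is exactly the $p$-adic analogue of the "rank is stable under taking derivatives" phenomenon used via Fact~\ref{der} earlier; I expect the bookkeeping of how a factor $\fp$ in the valuation of a linear form translates into a rank drop of $\hat P$ to be the delicate computational core, but no genuinely new idea beyond Fact~\ref{Main} and Weyl differencing should be needed.
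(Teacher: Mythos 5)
Your overall strategy --- reduce to the residue field via Weyl differencing and Fact \ref{Main}, then climb the tower $A_1,A_2,\dots$ --- has the right flavor, but there is a genuine quantitative gap at its core, and it is exactly the point the paper's argument is built to overcome. The theorem needs a saving $q^{-s\,d(\chi)}$ growing linearly in the conductor, for characters whose conductor can be as large as $l$, while the rank threshold $r(d,s,u)$ on $\hat P$ over $\mF_q$ must be independent of $l$. (The statement ``for all $\chi\in\Xi_d$'' has to be read as: for all characters of $A_l$, with the bound governed by $d(\chi)$; this is what the Claim preceding the theorem and the application to Theorem \ref{r} require, and what Proposition \ref{B} delivers with its $q^{-sl}$ bound uniform in $l$. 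Your reading, in which the conductor is at most the degree $d$, removes the hard part of the problem.) Concretely: in your first route, after differencing $d-1$ times over $A_m^n$ ($m=d(\chi)$) the bad tuples are $\{h:\ti P(h,\cdot)\equiv 0 \bmod \fp^m\}$, and controlling them only through the mod-$\fp$ condition via high rank of $\hat P$ gives $|b(P;\chi)|\le q^{-\gamma_d(r)/2^{d-1}}$ --- a \emph{fixed} saving, not one growing with $m$; to beat $q^{-sm}$ for all $m$ this way the rank would have to grow with the conductor, which the theorem forbids. Your second route has the same defect: if $\chi$ is primitive at level $j+1$ and you fiber $A_{j+1}^n\to A_j^n$, then on each fiber $P$ is affine with linear part $\nabla P(v_0)\bmod \fp$, so the inner sum vanishes unless $\nabla P(v_0)\equiv 0\ (\fp)$, and one obtains $|b(P;\chi)|\le \Pr_{x\in\mF_q^n}[\nabla(P\bmod\fp)(x)=0]$ --- a bound that does not improve as $j$ grows, because every level collapses onto the mod-$\fp$ critical locus; the ``controlled losses'' do not compound into $q^{-s(j+1)}$.

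The missing mechanism is the paper's double induction on $(d,l)$ for the strengthened statement (Proposition \ref{B}): a bound $q^{-sl}$, uniform in $l$, for $\mE\, e\big(\ti R(x)/\fp^l+S(x)/\fp^m\big)$ with an \emph{arbitrary} lower-degree perturbation $S$ at an arbitrary level $m$. In the inductive step one squares, splits according to the residue $y\equiv t\ (\fp)$, bounds each inner average $a_t$ by the induction on $l$ (this is where the perturbation $S$ and the terms $T_{x,t}$ make the stronger hypothesis indispensable), pigeonholes to get many $t$ with $|a_t|$ large, and then exploits that $\Delta_y R$ has degree $<d$: the induction on degree forces $\widehat{\Delta_y R}$ to have low rank for many $y$, a Gowers-norm monotonicity argument ($U_{d-1}\Rightarrow U_d\Rightarrow$ bias over $\mF_q$) turns this into a large value of $|\mE_{x\in V_1}e(\hat R(x)/\fp)|$, and Fact \ref{Main} then contradicts the high rank of $\hat R$. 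It is this interplay --- level-uniform, perturbed induction hypothesis; derivative polynomials of lower degree; passage through Gowers norms back to the residue field --- that converts a rank hypothesis at the bottom level into a saving linear in the level. You correctly flag the need to make the lifting ``uniform in $l$,'' but you do not supply any device that achieves it; as written, your plan proves at best the base case $l\le d$ (the paper's Lemma \ref{l-bound}), not the theorem.
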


To simplify notations we assume that $K=\mQ _p$. The proof in the general case is completely analogous. We denote   $V_l = A_l^n$, and $e(x)=e^{2 \pi i x}$. We introduce the Gowers uniformity norms: if $G$ is a finite abelian group $f :G \to \mC$ we define
\[
\|f\|^{2^m}_{U_m}  = \mE_{x, h_1, \ldots,h_m\in G }\partial_{h_m} \ldots \partial_{h_1}f(x)
\]
where $\partial_{h}f(x) = f(x+h)\overline{ f(x)}$.  Note that $\partial_{h}\chi(P(x)) = \chi(\Delta_hP(x))$

Our proof of Theorem \ref{Mainp}, which  is by induction in $d,l$ is based on the following stronger result.

\begin{proposition}[d,l]\label{B} For any $s>0$, there exists $r=r^B(d,s)$ such that for any polynomial $S$ of degree $<d$, any $m$, if $\hat R$ is of rank $>r$ then  
$$ \big|\mE_{x \in V^d_l} e \big(\ti  R(x)/\fp^l +S(x)/\fp^m\big) \big| < q^{-sl} .$$
\end{proposition}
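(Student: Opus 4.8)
The plan is to prove Proposition \ref{B}$(d,l)$ by a double induction: the outer induction is on $d$, the degree, and the inner induction is on $l$, the level of the residue ring. The base case $d=1$ is an explicit Gauss-sum computation, and the base case $l=1$ reduces precisely to Fact \ref{Main} (the finite-field bias--rank theorem), applied to the polynomial $\hat R + \hat S$ over $\mF_q$, since the $S(x)/\fp^m$ term contributes nothing new modulo $\fp$ once $m\ge l$, and for $m<l$ it can be folded into a shift of the character. So the real content is the induction step from $l-1$ to $l$ at fixed $d$, and from $d-1$ to $d$.

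\textbf{Key steps.} First I would set up the Gowers-norm machinery: write $T(x) = \ti R(x)/\fp^l + S(x)/\fp^m$ and note that $\ti R$ is multilinear in the $d$ blocks of variables, so applying a multiplicative derivative $\partial_{h}$ in one block $x_i$ lowers $\ti R$ to a form that is multilinear in the remaining $d-1$ blocks (and linear in $h$). The standard inequality $|\mE_x e(T(x))| \le \|e(T)\|_{U_m}$ for any $m$, combined with a Cauchy--Schwarz/van der Corput step in one of the coordinate blocks, reduces a bound on the exponential sum to a bound on an averaged family of exponential sums in which $\ti R$ has effectively been differentiated once. Second, I would show that differentiating $\ti R$ in a generic direction $h$ in one block preserves high rank: this is exactly the kind of statement in Fact \ref{der} (existence of $\ti r(r,\bar d)$ so that a derivative collection retains rank $\ge r$), so after fixing a good $h$ the differentiated form $\partial_h \ti R$ still has rank $> r'$ for a suitable $r'$, and the outer induction hypothesis on $d$ (or, when we differentiate to pass $\fp^l \to \fp^{l-1}$ by extracting a factor of $p$, the inner hypothesis on $l$) applies. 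Third, I would carefully track the arithmetic of the denominators: the derivative $\Delta_h(\ti R(x)/\fp^l)$ involves terms at level $\fp^l$ that, after the multilinear structure is used, split into a genuine level-$(l-1)$ piece (gaining a factor $p$, which is where the induction on $l$ bites) plus lower-order pieces absorbed into the role of $S$. One then chooses the auxiliary degree-$<d$ polynomial $S'$ arising in the differentiated sum and checks its degree is still $<d$, so the induction hypothesis is legitimately applicable. Finally, I would chase the quantitative loss: each Cauchy--Schwarz step squares the bound and each reduction costs a controlled factor, so choosing $r^B(d,s)$ large enough in terms of $\alpha_d$, $\ti r$, and the number of steps (which is $O(dl)$, but $l$ can be eliminated using that $q^{-sl}$ only needs to beat a fixed power) yields the claim; here one uses that the $\fp$-adic absolute value is non-archimedean, so there is no accumulation beyond the geometric factors.

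\textbf{Main obstacle.} The hard part will be the bookkeeping of denominators in the induction on $l$: after taking a derivative of $\ti R(x)/\fp^l$, one must genuinely produce a sum whose phase is of the shape $\ti R'(y)/\fp^{l-1} + S'(y)/\fp^{m'}$ with $\deg S' < d$ and with $\hat R'$ still of large rank, and this requires using the multilinearity of $\ti R$ in an essential way (a general degree-$d$ polynomial would not behave so cleanly, which is exactly why the statement is phrased for $\ti R$ rather than for $P$ directly), together with a clean statement that the rank of a multilinear form drops by at most a bounded amount under a generic partial derivative — this is the role played by Fact \ref{der}, and the delicate point is that the bound must be uniform in $q$ and in $n$. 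A secondary difficulty is making the reduction to $l=1$ genuinely land on Fact \ref{Main}: one must verify that when $l=1$ the extra phase $S(x)/\fp^m$ either vanishes (if $m\ge 1$ interpreted correctly) or merely twists by a linear character, so that $\hat R$ being high-rank over $\mF_q$ suffices; this is where the hypothesis is cleanest and where the whole scheme must be anchored.
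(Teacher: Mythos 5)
Your overall skeleton (double induction on $(d,l)$, Cauchy--Schwarz/van der Corput exploiting the multilinearity of $\ti R$ to kill the auxiliary phase $S$ and to extract powers of $\fp$, anchoring everything at Fact \ref{Main}) is the right frame, but two of your load-bearing steps do not hold as stated. First, your degree induction rests on the claim that differentiating $\ti R$ in a \emph{generic} direction in one block preserves high rank, and you attribute this to Fact \ref{der}. Fact \ref{der} only asserts the \emph{existence} of some directions $v_i,w_i$ for which an augmented collection has high rank; it says nothing about most directions, and gives no bound on the size of the bad set. This matters because after Cauchy--Schwarz you must control an average over \emph{all} shift directions, so ``fix a good $h$'' is not available, and a quantitative ``most contractions retain high rank'' statement is precisely the kind of rank-versus-slice-rank assertion the paper does not have and does not need. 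The paper runs the degree induction in the opposite (contrapositive) direction: assuming the level-$l$ sum is large, it deduces from Proposition \ref{B}$(d-1,\cdot)$ that the derivatives $\Delta_y\hat R$ have \emph{low} rank for a positive proportion of residues $y\equiv t\ (\fp)$, then converts ``many low-rank derivatives'' into largeness of $\|e(\hat R/\fp)\|_{U_d}$, which by multilinearity equals a power of the bias of $\hat R$, contradicting Fact \ref{Main} and the rank hypothesis. That reversal, and the passage through the Gowers $U_d$ norm at level $1$, is the heart of the argument and is absent from your plan.

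Second, your quantitative bookkeeping does not secure the one point that makes the statement nontrivial: the rank threshold $r^B(d,s)$ must be independent of $l$, while the target $q^{-sl}$ degrades with $l$. Each Cauchy--Schwarz step squares the estimate, so a scheme with $O(dl)$ such steps forces a threshold growing with $l$; your parenthetical that ``$l$ can be eliminated'' is exactly the gap. The paper's architecture is built around this: the base case is not $l=1$ but all $l\le d$ (Lemma \ref{l-bound}, with $r=\alpha_d(2^{2d-1}ds)$, using only $d+l-1\le 2d-1$ Cauchy--Schwarz applications — note also that $S$ is killed there by the $d$-fold multilinear Cauchy--Schwarz, not by folding it into a character), and for $l>d$ the inner averages $a_t$ are bounded by $q^{-2(l-d)s}$ via the identity $\ti R(\fp y)=\fp^d\ti R(y)$ — so the level drops by $d$ at a time, which is why the base cases $l\le d$ are needed — and the comparison $q^{-2ls}$ versus $q^{-2(l-d)s}$ loses only $q^{2ds}$, independent of $l$; the pigeonhole then yields a proportion $\ge 1/(2q^{2ds})$ of good residues $t$, again independent of $l$, feeding the level-$1$ Gowers-norm contradiction above. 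Without some device of this kind your induction on $l$ cannot close with an $l$-independent $r^B(d,s)$.
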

\begin{remark} The constant $r=r^B(d,s)$ does not depend on $l$.
\end{remark}
\begin{proof}

We start with the following result. 

\begin{lemma}\label{l-bound} Let  $r= \alpha_d(2^{2d-1}ds)$ be as in Fact \ref{Main}. Then 
Proposition \ref{B}(d,l) holds for $l \le d$. 
\end{lemma}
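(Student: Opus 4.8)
The plan is to reduce the claimed bound for $l \le d$ directly to the finite-field statement of Fact \ref{Main}. The key observation is that when $l \le d$, the ring $A_l = \mZ/p^l$ is so small relative to the multilinearity degree $d$ that the phase $\ti R(x)/\fp^l + S(x)/\fp^m$ — after clearing denominators — is governed by a top-degree piece that reduces, modulo $\fp$, to the multilinear form $\hat R$ on $\mF_q^d$. First I would write $e(\ti R(x)/\fp^l + S(x)/\fp^m) = e(G(x)/\fp^L)$ where $L = \max(l,m)$ and $G$ is a polynomial map $V_L^{\,d}\to A_L$ (lifting $\ti R$ and $S$ appropriately); the averaging over $x \in V_l^d$ is then an average of an additive character of $A_L$ composed with a polynomial of degree $\le d$ in each block of variables.

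Next I would invoke the Gowers-norm machinery set up just before the proposition. Since $\ti R$ is multilinear of degree $d$, one has $\partial_{h_d}\cdots\partial_{h_1}$ applied to $e(\ti R(x)/\fp^l)$ producing a constant (independent of $x$) phase, so that the average in question is controlled by a $U_d$-type norm of the function $x \mapsto e(\ti R(x)/\fp^l + S(x)/\fp^m)$ restricted to $V_l$. More directly: because $S$ has degree $<d$, it contributes nothing to the $d$-th derivative, so the $U_d$ norm depends only on $\ti R \bmod \fp^l$; and because $l \le d$, the relevant $d$-linear structure, after scaling the phase to clear the denominator $\fp^l$, collapses to the reduction $\hat R$ over $\mF_q$. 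This is exactly the setting where Fact \ref{Main} applies: $\hat R$ having rank $> r = \alpha_d(2^{2d-1}ds)$ forces the corresponding exponential sum over $\mF_q^n$ to be $s'$-uniform with $s'$ large enough — the factor $2^{2d-1}d$ absorbs the standard loss in passing from a $U_d$-norm bound back to an exponential-sum (i.e. $U_1$) bound via Cauchy–Schwarz iterated $d$ times, together with the factor $l \le d$ and the base-change from $A_l$ down to $\mF_q$.

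Concretely, the chain of inequalities I would assemble is: the average over $V_l^d$ is bounded (by Cauchy–Schwarz / the Gowers–Cauchy–Schwarz inequality, $2^{d-1}$ steps) by a power of $\|e(\ti R/\fp^l)\|_{U_d}$ on $V_l$; this Gowers norm, since $\ti R$ is genuinely $d$-linear, equals a bias of $e$ evaluated on $\ti R$ itself, which reduces mod $\fp$ (using $l\le d$ to control the $p$-adic digits) to the bias of $\hat R$ over $\mF_q^{nd}$; and finally Fact \ref{Main} bounds that bias by $q^{-s''}$ with $s'' \ge sl$ once $\operatorname{rank}(\hat R) \ge \alpha_d(s'')$, which holds by the hypothesis $\operatorname{rank}(\hat R) > \alpha_d(2^{2d-1}ds) \ge \alpha_d(sl)$ since $l \le d \le 2^{2d-1}d$.

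The main obstacle I anticipate is bookkeeping the $p$-adic denominators cleanly: one must check that averaging $e(\ti R(x)/\fp^l)$ over $x$ ranging in the small ring $V_l = A_l^n$ (rather than over $\mF_q^n$) genuinely reproduces, up to a harmless normalizing power of $q$, the finite-field exponential sum attached to $\hat R$ — i.e. that the higher $p$-adic digits of $x$ either factor out or are killed precisely because $l \le d$ makes the form vanish on those digits after $d$-fold differentiation. Getting the constant to come out as $\alpha_d(2^{2d-1}ds)$ rather than something worse is the quantitatively delicate point; the $2^{2d-1}d$ is dictated by exactly $d-1$ Cauchy–Schwarz doublings (hence $2^{d-1}$, squared in the norm definition giving $2^{2(d-1)}$, and a further factor $\le 2d$ from the $l\le d$ slack), so I would track these factors explicitly rather than absorb them into an unspecified constant.
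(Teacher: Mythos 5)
Your first stage is essentially the paper's: $d$ block-wise Cauchy--Schwarz applications (one in each coordinate $x_1,\dots,x_d$), using the multilinearity of $\ti R$ so that differencing in a block replaces $\ti R(x_i,y)$ by $\ti R(t,y)$ and annihilates $S$ (degree $<d$), yielding $\mE_{x\in V_l^d} e\bigl(\ti R(x)/\fp^l\bigr)\ge q^{-2^d ds}$. The genuine gap is in your second stage. You assert that this $A_l$-average, "up to a harmless normalizing power of $q$," \emph{is} the $\mF_q$-bias of $\hat R$ because the higher $p$-adic digits "factor out or are killed" when $l\le d$. That is not an identity and the reduction is not free. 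For instance with $d=2$, $l=2$, writing $x=x_0+\fp x_1$, $y=y_0+\fp y_1$ and averaging over the top digits gives $\mE_{x_0,y_0} e\bigl(\ti R(x_0,y_0)/\fp^2\bigr)\,1[\hat R(\cdot,\bar y_0)\equiv 0]\,1[\hat R(\bar x_0,\cdot)\equiv 0]$, which is not the bias of $\hat R$ over $\mF_q$; comparing the two requires a further argument with a quantitative loss. Relatedly, your claim that the $U_d$ norm of $e(\ti R/\fp^l)$ "equals a bias of $e$ evaluated on $\ti R$ itself" is not correct as stated: the full $d$-fold derivative of the multilinear form in directions $(h_1,\dots,h_d)\in (V_l^d)^d$ produces cross terms, so one must stick to per-coordinate differencing.

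The paper closes exactly this gap by a second round of Cauchy--Schwarz: write $x=x+\fp y$ with $y\in V_{l-1}^d$ and apply CS successively in $y_1,\dots,y_{l-1}$; at each step multilinearity converts $\ti R(\fp h_i,\cdot)/\fp^{l-i+1}$ into $\ti R(h_i,\cdot)/\fp^{l-i}$, so after $l-1$ steps the denominator is $\fp$ and the phase depends only on reductions mod $\fp$, giving $\mE_{x\in V_1^d} e\bigl(\hat R(x)/\fp\bigr)\ge q^{-2^{d+l-1}ds}$. The hypothesis $l\le d$ is used precisely here: one needs $l-1$ distinct coordinates to difference in, leaving $x_l,\dots,x_d$ intact. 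Fact \ref{Main} then forces $\operatorname{rank}(\hat R)\le \alpha_d(2^{d+l-1}ds)\le\alpha_d(2^{2d-1}ds)$, the desired contradiction; this also explains the constant, which your bookkeeping ($2^{2(d-1)}$ from squaring plus a factor "$\le 2d$ of slack") does not reproduce --- the correct accounting is $2^d$ from killing $S$ times $2^{l-1}\le 2^{d-1}$ from lowering the denominator. So the strategy is right, but the denominator-reduction step needs the explicit $(l-1)$-fold Cauchy--Schwarz argument rather than an appeal to digits "factoring out."
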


\begin{proof} 
It suffices to  show that  for any $1\le l\le d$, for any polynomial $R: V_l \to A_l$ of degree $d$ such that the rank of $\hat R$ is  $>r$,  a polynomial $S:V_l^d$ of degree $<d$, and any $m$ we have 
$$\big|\mE_{x \in V^d_l} e\big( \ti R (x)/\fp^l+S(x)/\fp^m\big) \big| <q^{-ds}.$$

Suppose that 
$$\big|\mE_{x \in V^d_l} e\big( \ti R (x)/\fp^l+S(x)/\fp^m\big) \big| \ge q^{-ds}.$$ for some $1<l \le d, m\geq 0$.
We observe that by the CS inequality we have
\[\begin{aligned}
&\big|\mE_{x \in V^d_l} e\big( \ti R (x)/\fp^l+S(x)/\fp^m\big) \big|^2  \le \mE_{y \in V^d_{l-1}} \big| \mE_{x_1 \in V_{l-1}} e\big( \ti R (x_1, y)/\fp^l+S(x_1,y)/\fp^m\big) \big|^2 \\
&= \mE_{y \in V^d_{l-1}} \mE_{x_1,t \in V_{l-1}} e\big( (\ti R (x_1+t, y)-\ti R (x_1, y))/\fp^l+(S(x_1+t,y)- S(x_1,y))/\fp^m\big). \\
\end{aligned}\]
whiih is equal to 
\[
\mE_{y \in V^d_{l-1}} \mE_{x_1,t \in V_{l-1}} e\big( \ti R (t, y)/\fp^l+(S(x_1+t,y)- S(x_1,y))/\fp^m\big)
\]
Applying CS inequality $d-1$ more times in the coordinates $x_2, \ldots, x_d$ we obtain (since $S$ is of degree $<d$):
$$\big(\mE_{x \in V^d_l} e\big( \ti R (x)/\fp^l\big)\big)^{1/2^d}  \ge q^{-ds}.$$
In other words
$$\mE_{x \in V^d_l} e\big( \ti R (x)/\fp^l\big)  \ge q^{-2^d ds}.$$
We rewrite the LHS as
\[
\mE_{x \in V^d_l} e\big( \ti R (x)/\fp^l\big) = \mE_{x} \mE_{y \in V^d_{l-1}}e\big( \ti R (x+\fp y)/\fp^l\big).  
\]
We observe that by the CS inequality we have
\[\begin{aligned}
&| \mE_{x} \mE_{y \in V^d_{l-1}}e\big( \ti R (x+\fp y)/\fp^l\big)|^2   \le   \mE_{x} \mE_{ y' \in  V^{d-1}_{l-1} }| \mE_{y_1 \in V_{l-1}}e\big( \ti R (x_1+\fp y_1, x'+\fp y')/\fp^l\big)|^2 \\
&=  \mE_{x} \mE_{ y' \in  V^{-1}_{l-1} }  \mE_{y_1,h_1 \in V_{l-1}}e\big( (\ti R (x_1+\fp (y_1+h_1), x'+\fp y') -\ti R (x_1+\fp y_1, x'+\fp y')) /\fp^l\big) \\
\end{aligned}\]
Which is equal to 
\[
  \mE_{x} \mE_{ y' \in  V^{-1}_{l-1} }  \mE_{y_1,h_1 \in V_{l-1}}e\big( (\ti R (\fp h_1, x'+\fp y') /\fp^l\big) =  \mE_{x} \mE_{ y' \in  V^{d-1}_{l-1} }  \mE_{y_1,h_1 \in V_{l-1}}e\big( (\ti R (h_1, x'+\fp y') /\fp^{l-1}\big).
\]

Applying the CS inequality $l-2$ more times we see that 
\[
\mE_{x \in V^d_l} e\big( \ti R (x)/\fp^l\big) \le [\mE_{x} \mE_{h_i \in V_{l-1}}e\big(  \ti R(h_1, \ldots, h_{l-1}, x_{l}, \ldots, x_d) /\fp \big)]^{1/2^{l-1}},
\]
and therefore
\[
\mE_{x \in V_1^d}e\big(  \hat R(x_{1}, \ldots, x_d) /\fp \big) \ge q^{-2^{d+l-1}ds}.
\]
Thus $\hat R$ is of rank $\le \alpha_d(2^{d+l-1}ds)$.
\end{proof}


We prove Proposition \ref{B}(d,l) by induction on $d,l$. Let $P$ be  such  that $\hat P $ is of rank $>c(d,s)$ where 
$$c(d,s)=\max \big\{  \alpha_d(2^{2d-1}ds), \alpha_d\big(2^d\big(r^B(d-1, 2s+1)+2ds+1\big)\big) \big\}.$$
By  Lemma \ref{l-bound}, 
Proposition \ref{B}(d,l) holds when 
$l \le d$. So we assume $l>d$. 

Suppose that  Proposition \ref{B}(d',l') holds if either $d'<d$ or $d'=d$ and $l'<l$. If Proposition \ref{B}(d,l) fails then there exists a degree $d$ polynomial $R$ such that  $ \ti R :V^d_l \to A_l$ is of rank $> c(d,s)$
and some polynomial $S:V^d_l \to A_l$ of degree $<d$ and $m \in \mN$ such that:
\[
\big|\mE _{x \in V^d_l} e\big(\ti R(x)/\fp^l+S(x)/\fp^m\big) \big| \ge 1/q^{ls}.
\]

From this it follows that 
\[\begin{aligned}
\frac{1}{q^{2ls}} &\le \big|\mE_{x \in V_l^d} e\big(\ti  R(x)/\fp^l+S(x)/\fp^m\big)\big|^2 \\
&=   \mE_{x,y \in V_l^d} e\big((\ti R(x+y)- \ti R(x))/\fp^l+(S(x+y)-S(x))/\fp^m)\big) \\
&= \mE_{t \in V^d_1} \mE_{y \in V_l^d: y \equiv t (\fp)} \mE_{x \in V_l^d} e\big((\ti  R(x+y)-\ti R(x))/\fp^l+(S(x+y)-S(x))/\fp^m\big) 
 \end{aligned}\]

Fix $t$ and  consider the inner average:
\[\begin{aligned}
a_t &:= \mE_{y \in V_l^d: y \equiv t (\fp)} \mE_{x \in V_l^d} e\big(( \ti R(x+y)-\ti R(x))/\fp^l+ (S(x+y)-S(x))/\fp^m\big)  \\ \end{aligned}\]
 By shifting $x\to x+y'$ and averaging we see that
\[\begin{aligned}
a_t&=   \mE_{x \in V_l^d} \mE_{y, y' \in V_l^d: y \equiv t (\fp),  y' \equiv 0 (\fp)} \\
& \qquad  e\big(( \ti R(x+y'+y)-\ti R(x+y'))/\fp^l + (S(x+y'+y)-S(x+y'))/\fp^m\big) \\
&=   \mE_{x \in V_l^d} \big|\mE_{ y \equiv  t (\fp)}  e\big((\ti  R(x+y))/\fp^l +S(x+y))/\fp^m\big)\big|^2 \
 \end{aligned}\]

\begin{lemma}
 $|a_t|\le \frac{1}{q^{2(l-d)s}}$
\end{lemma}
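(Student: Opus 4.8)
The plan is to rewrite $a_t$ as a single exponential sum whose leading phase is the high--rank multilinear form $\ti R$, but now at the reduced modulus $\fp^{l-d}$, and then to bound it using the inductive cases $B(d,l-d)$ and $B(d-1,\,\cdot\,)$. First I would remove the dependence on $t$: fix a lift $t'\in V^d_l$ of $t$, substitute $z=x+t'$ in the outer average and parametrise the $\fp$--class of $y$ by $y=t'+\fp u$ with $u\in V^d_{l-1}$ (the map $u\mapsto\fp u$ being a bijection of $V^d_{l-1}$ onto $\fp V^d_l$). Since $x+y=z+\fp u$,
\[
a_t=\mE_{z\in V^d_l}\Big|\mE_{u\in V^d_{l-1}}e\big(\ti R(z+\fp u)/\fp^l+S(z+\fp u)/\fp^m\big)\Big|^2 ,
\]
which is visibly independent of $t$; write $c_z$ for the inner average, so $a_t=\mE_z|c_z|^2$, and it suffices to bound $\mE_z|c_z|^2$ (in fact I expect $|c_z|\le q^{-(l-d)s}$ for every $z$).

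Next I would peel off the top multilinear part. By multilinearity $\ti R(z+\fp u)-\ti R(z)=\sum_{\emptyset\ne J\subseteq[d]}\fp^{|J|}\ti R_J(z;u)$, where $\ti R_J$ inserts $u_j$ in the slots $j\in J$ and $z_i$ in the slots $i\notin J$; the term $J=[d]$ equals $\fp^d\ti R(u)$, contributing $\ti R(u)/\fp^{l-d}$ after dividing by $\fp^l$, while all the remaining terms, together with $S(z+\fp u)-S(z)$, assemble into a polynomial $Q_z(u)$ of degree $\le d-1$ whose denominators all divide $\fp^{l-1}$. Hence, up to a $u$--independent phase, $|c_z|=\big|\mE_{u\in V^d_{l-1}}e(\ti R(u)/\fp^{l-d}+Q_z(u)/\fp^{l-1})\big|$. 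Since $\ti R(u)\bmod\fp^{l-d}$ depends only on $\bar u:=u\bmod\fp^{l-d}\in V^d_{l-d}$, splitting $u=\bar u+\fp^{l-d}u''$ turns this into a weighted exponential sum over $V^d_{l-d}$:
\[
|c_z|=\Big|\mE_{\bar u\in V^d_{l-d}}e\big(\ti R(\bar u)/\fp^{l-d}\big)\,G_z(\bar u)\Big|,\qquad G_z(\bar u):=\mE_{u''}e\big(Q_z(\bar u+\fp^{l-d}u'')/\fp^{l-1}\big),
\]
with $|G_z|\le 1$.

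It remains to bound this weighted sum by $q^{-(l-d)s}$, and this is the step I expect to be the main obstacle: $G_z$ is a genuine weight, not a phase, so one cannot feed $\ti R(\bar u)/\fp^{l-d}$ directly into $B(d,l-d)$. The plan is to control $G_z$ through the degree--$(d-1)$ induction hypothesis. The top--degree part of $Q_z$ in the $u$--variables is the sum, over $i$, of the $(d-1)$--linear forms $\ti R(u_1,\dots,u_{i-1},z_i,u_{i+1},\dots,u_d)$ obtained from $\ti R$ by freezing one argument; a bias--rank argument of the kind used in Lemma~\ref{l-bound} shows that, because $\hat R$ has rank exceeding $c(d,s)\ge\alpha_d\!\big(2^{d}(r^B(d-1,2s+1)+2ds+1)\big)$, all of these forms have rank $>r^B(d-1,2s+1)$, so $B(d-1,\,\cdot\,)$ forces $G_z$ to be negligible (say $\le q^{-(2s+1)(l-1)}$) off a set of $\bar u$ of comparably small density. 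Decomposing $G_z$ into this negligible part (disposed of by Cauchy--Schwarz) and a bounded--complexity part (disposed of by the oscillation of $\ti R$ at modulus $\fp^{l-d}$ via $B(d,l-d)$ — or, when $l=d+1$, directly by the uniformity of $\hat R$ guaranteed by its rank exceeding $\alpha_d(2^{2d-1}ds)$, exactly as in Lemma~\ref{l-bound}), the total cost of the weight is only a factor $q^{O(ds)}$, which is absorbed in the gap between $q^{-(2s+1)(l-1)}$ and $q^{-(l-d)s}$ valid for $l>d$. This yields $|c_z|\le q^{-(l-d)s}$ and hence $|a_t|=\mE_z|c_z|^2\le q^{-2(l-d)s}$; the constant $c(d,s)$ was evidently engineered precisely so that this last step closes.
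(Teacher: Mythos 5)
Your reduction coincides with the paper's proof up to the point where
$|c_z|=\big|\mE_{u\in V^d_{l-1}} e\big(\ti R(u)/\fp^{l-d}+Q_z(u)/\fp^{l-1}\big)\big|$ with $\deg Q_z\le d-1$: shifting by a lift of $t$, extracting the term $\fp^d\ti R(u)$ from the multilinear expansion, and observing that all remaining terms have degree $<d$ in $u$. But at exactly that point you declare that the induction hypothesis cannot be invoked, whereas the paper concludes in one line by invoking it. The obstacle you perceive comes from insisting on collapsing the sum to $V^d_{l-d}$ with a pure phase, which indeed produces a weight $G_z$; but Proposition \ref{B} was deliberately formulated (and proved already in the base case, Lemma \ref{l-bound}) with an arbitrary degree-$<d$ perturbation $S(x)/\fp^m$ at an \emph{arbitrary} modulus $m$, i.e. it bounds the average over any level fine enough for all phases to be defined, the gain $q^{-sl}$ being governed only by the modulus $\fp^{l}$ of the leading multilinear term. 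Applying Proposition \ref{B}$(d,l-d)$ with $S:=Q_z$ and $m:=l-1$ (legitimate in the induction, since $l-d<l$) gives $|c_z|\le q^{-(l-d)s}$ uniformly in $z$ — the entire $z$-dependence sits in the tolerated lower-degree part — and then $|a_t|=\mE_z|c_z|^2\le q^{-2(l-d)s}$. No weight $G_z$ ever needs to be introduced.

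Beyond being unnecessary, your workaround has a genuine gap. The claim that all frozen forms $\ti R(u_1,\dots,u_{i-1},z_i,u_{i+1},\dots,u_d)$ have rank $>r^B(d-1,2s+1)$ because $\hat R$ does is false for special $z$ (for $z_i\equiv 0\ (\fp)$ the frozen form vanishes identically), so any control of $G_z$ obtained this way cannot be uniform in $z$; yet your conclusion ``$|c_z|\le q^{-(l-d)s}$ for every $z$'' requires uniformity, or at least a treatment of the exceptional set of $z$, which you do not supply. Likewise, ``decompose $G_z$ into a negligible part and a bounded-complexity part'' is not an argument — $G_z$ is merely a weight of modulus $\le 1$ whose structure you never identify — and the final bookkeeping (that the losses $q^{O(ds)}$, the exceptional $z$, and the exceptional $\bar u$ are all absorbed in the gap between $q^{-(2s+1)(l-1)}$ and $q^{-(l-d)s}$) is asserted rather than verified. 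As written, the second half of your proposal does not close, while the correct reading of the induction hypothesis renders it superfluous.
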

\begin{proof}
\[\begin{aligned}
&\big|\mE_{ y \equiv  t  (p)}  e\big((\ti  R(x+y))/\fp^l+S(x+y))/\fp^m\big) \big|\\
&=\big| \mE_{y \in V_l^d,  y \equiv 0(\fp)}  e\big((\ti  R(x+t+y))/\fp^l+S(x+t+y))/\fp^m\big) \big|\\
& =\big| \mE_{ y \in V_{l-1}^d}  e\big((\ti R(x+t+\fp y))/\fp^l+S(x+t+\fp y))/\fp^m\big)  \big| \\
& =\big| \mE_{ y \in V_{l-1}^d}  e\big(\frac{\ti  R(y)}{\fp^{l-d}} + \frac{S(y)}{\fp^{m-d}} +\frac{S_{x,t}(\fp y))}{\fp^{m}}+ \frac{T_{x,t}(\fp y))}{\fp^{l}}\big)  \big|
 \end{aligned} \]
(if $d>m$ the second term does not exist) where $T_{x,t}(\fp y)$ is of degree $\le (d-1)$ in $y$.  By the induction hypothesis  the latter is $\le  \frac{1}{q^{(l-d)s}}$.
\end{proof}

\begin{claim} Let $b_t\geq 0$ be  such that  $\mE_{t\in E} b_t\ge  \frac{1}{q^{2ls}}$ and $b_t\le  \frac{1}{q^{2(l-d)s}}$ then for $\frac{1}{2q^{2ds}}|E|$ many $t$ we have that
$b_t\ge  \frac{1}{2q^{2ls}}$.
\end{claim}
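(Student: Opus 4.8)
The plan is to run the standard first-moment (Markov-type) splitting. Set $B := \{\, t \in E : b_t \ge \tfrac{1}{2q^{2ls}}\,\}$, the set of ``good'' indices; we must show $|B| \ge \tfrac{1}{2q^{2ds}}|E|$. Writing the hypothesised lower bound on the average as a sum over $B$ and its complement,
$$\frac{1}{q^{2ls}} \le \mE_{t \in E} b_t = \frac{1}{|E|}\sum_{t \in B} b_t + \frac{1}{|E|}\sum_{t \in E \setminus B} b_t .$$

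Next I would bound the two pieces separately. On $E \setminus B$ the definition of $B$ gives $b_t < \tfrac{1}{2q^{2ls}}$, so the second sum is at most $\tfrac{|E\setminus B|}{|E|}\cdot\tfrac{1}{2q^{2ls}} \le \tfrac{1}{2q^{2ls}}$. On $B$ the only available upper bound is the hypothesis $b_t \le \tfrac{1}{q^{2(l-d)s}}$ --- this is exactly where that assumption is used --- which gives $\tfrac{1}{|E|}\sum_{t \in B} b_t \le \tfrac{|B|}{|E|}\cdot\tfrac{1}{q^{2(l-d)s}}$. Substituting both estimates into the display and subtracting $\tfrac{1}{2q^{2ls}}$ yields
$$\frac{1}{2q^{2ls}} \le \frac{|B|}{|E|}\cdot\frac{1}{q^{2(l-d)s}} ,$$
and rearranging gives $\tfrac{|B|}{|E|} \ge \tfrac12\, q^{2(l-d)s - 2ls} = \tfrac{1}{2q^{2ds}}$, which is precisely the claim.

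There is no genuine obstacle: the statement is a one-line pigeonhole argument, and the only point demanding attention is keeping track of the exponents $2ls$ versus $2(l-d)s$ so that their difference produces the advertised factor $q^{-2ds}$. (The assumption $b_t \ge 0$ is not even needed for this form of the splitting; it is recorded because the quantities $b_t$ to which the claim is applied are manifestly non-negative.)
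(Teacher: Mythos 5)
Your argument is correct and is essentially the paper's own proof: both split the sum over the good set $F=\{t: b_t\ge \tfrac{1}{2q^{2ls}}\}$ and its complement, bound the two pieces by the hypotheses, and rearrange (the paper even retains the slightly sharper $|F|\ge \tfrac{|E|}{2(q^{2ds}-1/2)}$ before weakening to the stated form). Your closing remark that nonnegativity of $b_t$ is not needed for this splitting is also accurate.
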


\begin{proof}
Let $F$ be the set where $b_t\ge  \frac{1}{2q^{2ls}}$.   We have that 
\[
  \frac{1}{q^{2ls}}|E|   \le \sum _{t \in E} |a_t| \le \sum _{t \in F}   \frac{1}{q^{2(l-d)s}} + \sum_{t \in E-F} \frac{1}{2q^{2ls}}
\]
Rearranging  we get
\[
  |E|   \le |F|(q^{2ds} -1/2)+  |E| \frac{1}{2},
\]
so that $|F| \ge \frac{|E|}{2(q^{2ds} -1/2)}$. 
\end{proof}

Applying this Claim to $b_t:=|a_t|$ we see that for $\ge \frac{1}{2q^{2ds}}|V_1^d|$ many $t \in V_1^d$ we have that 
\[
\big|\mE_{y \in V_l^d: y \equiv t(\fp)} \mE_{x \in V_l^d} e\big(( \ti R(x+y)-\ti R(x))/\fp^l+ (S(x+y)-S(x))/\fp^m \big)\big|  \ge  \frac{1}{2q^{2ls}}\ge  \frac{1}{q^{l(2s+1)}}.
\]
Now $Q_y=\Delta_y  R$ is of degree $<d$ so by the induction on the degree $\hat Q_y$ is of rank $< r^B(d-1, 2s+1)$.

This implies that for $\ge \frac{1}{2q^{2ds}}|V_1^d|$ many $t \in V_1^d$, if $y\equiv t (\fp)$ then  
\[
\big\| \mE_{x \in V_1^d} e\big((\hat R(x+y)- \hat R(x)\big)/p\big) \big\|_{U_{d-1}} \ge  \frac{1}{q^{r^B(d-1, 2s+1)}}
\]
But this now implies that 
\[
\mE_{t \in V_1^d}\big\|\mE_{y \in V_l^d: y \equiv t (\fp)} \mE_{x \in V_l^d} e\big((\hat R(x+y)- \hat R(x))/\fp\big)  \big\|_{U_{d-1}}\ge    \frac{1}{q^{r^B(d-1, 2s+1)}q^{2ds+1}}
\]
But LHS is bounded by  
\[
\| e(\hat R(x)/\fp)\|^2_{U_d}\le \| e(\hat R(x)/\fp)\|_{U_d} \le \big|\mE_{x \in V_1 } e\big(\hat R(x)/\fp \big)\big|^{1/2^d}    
\]
  and we are given that  $\hat R$ is of rank $>\alpha_d\big( 2^d\big(r^B(d-1, 2s+1)+2ds+1\big)\big)$, contradiction. 

\end{proof}


\section{Rational singularities}
\begin{definition}
Let  $\bX$ be a normal irreducible variety over a field of characteristic zero and $a:\ti \bX \to \bX$ a resolution of singularities. We say that $\bX$ 
 has rational singularities if $R ^i a_\star(\mcO _{\ti \bX})=\{0\}$ for $i>0$.
\end{definition}
\begin{remark} This property of $\bX$ does not depend on a choice of a  resolution $a:\ti \bX \to \bX$.
\end{remark} 

\begin{theorem}\label{r} There exists a function  $r(\bar d), \bar d=(d_1,\dots ,d_c)$ such that the following holds.

 Let $\bar P=(P_i)_{i=1}^c \subset \mC [x_1,\dots ,x_n]$ be  a collection of (not necessarily homogeneous) polynomials of degrees $d_i,1\leq i\leq c$ of rank $\geq r(\bar d)$. Then the variety $\bX _{\bar P}$ has rational singularities.
\end{theorem}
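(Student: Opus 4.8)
The plan is to combine the geometric facts already established in this paper—namely that $\bX_{\bar P}$ is a reduced, geometrically irreducible, normal complete intersection (Theorem \ref{fibers}) whenever the $nc$-rank is large—with a standard characterization of rational singularities for complete intersections. Since we work over $\mC$, we may use the fact that a normal local complete intersection has rational singularities if and only if it is \emph{canonical}, and by a theorem of Elkik and others this can be checked via the discrepancies of a log resolution, or equivalently via the Du Bois / multiplier ideal criterion. I would first reduce, exactly as in the proof of Theorem \ref{main0} and Theorem \ref{fibers}, to a universal family: by Theorem \ref{universal} (or its number-field refinement) a polynomial collection of sufficiently high $nc$-rank pulls back a fixed universal collection $\bar Q_m$ on a fixed space $\bV_m$ via an affine surjection $\phi$, and the property of having rational singularities is inherited under the smooth base change $V \to V_m$ once $r(\bar P)$ is large enough that $\phi$ is a submersion onto its image with $\bX_{\bar P} = \phi^{-1}(\bX_{\bar Q_m})$; so it suffices to prove that one explicit high-rank model has rational singularities.

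The core of the argument is then a Bertini-type / Inversion-of-adjunction reduction of the dimension of the singular locus. Recall Fact \ref{der}: for a collection of $nc$-rank $\geq \ti r(r,\bar d)$ one can differentiate in auxiliary directions and still retain $nc$-rank $\geq r$. I would iterate this: for any fixed $N$, if $r(\bar P)$ is large enough, then for a generic choice of $v_{i,1},\dots,v_{i,N} \in V$ the enlarged collection $\bar P$ together with all iterated partials $\partial P_i / \partial v_{i,j_1}\cdots \partial v_{i,j_k}$ (for $k \le N$) still has $nc$-rank $\geq r_{\bar d}$, hence by Theorem \ref{fibers} all its fibers are irreducible complete intersections of the expected codimension. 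Stratifying $\bX_{\bar P}$ by the rank of the Jacobian of $\bar P$ and using the dimension count from Theorem \ref{fibers} applied to these enlarged collections, one gets that the locus where the Jacobian drops rank by $\ell$ has codimension growing with $\ell$—in fact $\codim_{\bX_{\bar P}}(\bX_{\bar P})_{\sing,\ell} \geq \ell$ plus an amount one can make arbitrarily large relative to any fixed bound. This is precisely the hypothesis in Elkik's criterion (or in the criterion phrased via Proposition III$_C$ of \cite{S}, analogous to its use in the proof of Theorem \ref{main0}) guaranteeing rational—indeed canonical—singularities for a local complete intersection.

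Concretely, I would fix a log resolution $a : \ti\bX \to \bX_{\bar P}$, and bound the discrepancies from below using the above codimension estimate for the singular strata: the standard computation (blowing up along the strata, using that $\bX_{\bar P}$ is a complete intersection so its multiplier ideal is controlled by the codimension of the non-lci-smooth locus) shows that if every stratum $(\bX_{\bar P})_{\sing,\ell}$ has codimension $> c\cdot\ell$ for a suitable absolute constant $c$ depending only on $\bar d$, then all discrepancies are $> -1$, i.e. $\bX_{\bar P}$ has log-terminal, hence rational, singularities. The function $r(\bar d)$ is then obtained by composing: the codimension bound required by Elkik's criterion determines how many times we must differentiate (an $N = N(\bar d)$), then Fact \ref{der} iterated $N$ times determines $r(\bar d) = \ti r^{(N)}(r_{\bar d'}, \bar d)$ where $\bar d'$ is the degree vector of the $N$-fold differentiated collection.

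The main obstacle I anticipate is making the Bertini/discrepancy step genuinely uniform and clean. There are two subtleties: first, Fact \ref{der} as stated only adds first partials, so to control the singular strata of all orders one needs to apply it repeatedly and track that the degree vector $\bar d'$ (and hence the threshold $r_{\bar d'}$) stays bounded in terms of $\bar d$ and $N$ alone—this works because differentiating only lowers degrees, so $\bar d'$ has entries $\le d$ and length $\le c(1+\sum\binom{\dim?}{\cdot})$... which must be pinned down carefully. Second, translating "all singular strata have large codimension" into "rational singularities" for a complete intersection is exactly the content of Elkik's theorem (see also \cite{kz-singular}, which is already cited for Fact \ref{der}); the cleanest route is probably to cite that the variety is a complete intersection with singular locus of codimension $\geq 3$ and is canonical by the inversion-of-adjunction / \cite{S}-style codimension bound, rather than to reprove the homological vanishing by hand. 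If a self-contained argument is wanted, one instead proves directly that $R^i a_\star \mcO_{\ti\bX} = 0$ by combining Grauert–Riemenschneider vanishing with the complete-intersection structure and a Koszul-complex computation, where again the key input is that the non-Cohen–Macaulay-enough locus is deep—but this is the part that will require the most care.
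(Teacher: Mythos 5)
Your central step does not go through as written. You reduce everything to the claim that a normal complete intersection of bounded degrees whose Jacobian-rank strata have large codimension is canonical, ``by Elkik's criterion'' or by blowing up along the strata and bounding discrepancies. There is no such criterion: Elkik's theorem says that canonical local complete intersections have rational singularities (plus deformation statements); it is not a codimension test, and a discrepancy bound requires controlling orders of vanishing of $\bar P$ along \emph{all} exceptional divisors over $\bX_{\bar P}$, not just the codimension of the singular strata and the first blow-up. In the lci case the correct translation of dimension counts into discrepancy bounds is Musta\c{t}\u{a}'s jet-scheme criterion, which needs $\dim (\bX_{\bar P})_m=(m+1)\dim \bX_{\bar P}$ for \emph{every} jet order $m$; a fixed codimension bound on the singular locus does not yield this, since the naive estimate for jets lying over the singular locus requires a codimension growing linearly in $m$. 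The cone examples $x_1^{d}+\dots+x_k^{d}=0$ (non-rational exactly when $d\ge k$, with singular locus of codimension $k-1$) show that any purely codimension-based threshold must at least exceed the degree, and no standard theorem supplies a sufficient threshold depending only on $\bar d$. To rescue your strategy you would have to apply the high-rank machinery (Theorem \ref{fibers}) to the defining equations of each jet scheme $(\bX_{\bar P})_m$ themselves — these again form collections of controlled degree and high rank — and then quote Musta\c{t}\u{a}; that is a genuinely viable alternative route, but it is not what you wrote, and it is the part you yourself flag as ``requiring the most care.'' A second, separate error: your opening reduction inverts Theorem \ref{universal}. Universality says every low-dimensional $\bar Q$ on $K^m$ is $\phi^\star(\bar P)$ for some affine $\phi:K^m\to V$; it does not exhibit $\bX_{\bar P}$ as $\phi^{-1}(\bX_{\bar Q_m})$ for a submersion $\phi:V\to V_m$ onto a fixed model. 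Indeed $\bar P=\phi^\star(\bar Q_m)$ for a linear surjection $\phi$ would force $r(\bar P)$ to be bounded by a constant depending on $\dim V_m$, contradicting the high-rank hypothesis, so there is no fixed finite-dimensional model to reduce to.

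For comparison, the paper's proof is entirely different and avoids birational geometry: it first treats coefficients in a number field by reducing modulo suitable primes, using the $p$-adic equidistribution theorem (Theorem \ref{Mainp}, the main technical result of Section \ref{section-pbr}) to verify the point-count estimates over $\mZ/p^m\mZ$ that, by the Aizenbud--Avni criterion \cite{an}, characterize rational singularities of such complete intersections; it then passes from $\bar\mQ$-points to all of $\mC$ by showing that the locus of parameters whose fiber has rational singularities is constructible and defined over $\mQ$ (via coherence of $R^ib_\star\mcO$ for a generic resolution and induction on the base). None of the arithmetic input appears in your proposal, and the geometric substitute you propose for it is the missing piece.
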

\begin{proof}
To simplify the exposition we assume that  $c=1$. Thus  
$\bar P =(P) $ where $P \in \mC [x_1, \dots ,x_n] $ is a homogeneous polynomial of degree $d$.

We first consider the case when $ P \in K [x_1, \dots ,x_n] $ where $K/\mQ$ is a finite extension. 
In this case there exists an infinite  set $S$ of prime ideals in $\mcO _K$ such that 

\begin{enumerate}
\item  for any $\pi  \in S$ the completion ${\mcO _K}_\pi$ of $\mcO _K$ at $\pi$ is isomorphic to $\mZ _p$ where  $p=\text{char}(\mcO _K/\pi)$, 
\item $P\in {\mcO _K}_\pi [x_1, \dots ,x_n]$  and 
\item the reduction $\hat P \in \mF _p[x_1, \dots ,x_n] $ is of rank $r(P)$ over the algebraic closure of $\mF _p$.
\end{enumerate}

We fix $\pi \in S$ such that $p>\deg(P)$. As follows from Theorem A of \cite{an} it is sufficient to establish the inequalities
$|\bX (\mZ /p^m\mZ)-p^{m(n-1)} |\leq p^{m(n-1)-1/2}, m\geq 1$.

We see that for number fields the validity of Theorem \ref{r} in the case when $ P \in K [x_1, \dots ,x_n] $  follows immediately from Theorem \ref{Mainp}.

We show now how to derive the general case of Theorem \ref{r} from 
the case when $ P \in K [x_1, \dots ,x_n] $ where $K/\mQ$ is a finite extension. 

\begin{definition}
 Let  $a:\bX \to \bY$ be a morphism between complex algebraic varieties such that all the fibers  $X_y$ are normal and geometrically irreducible.
We write $Y=\bY (\mC)$ and 
denote by $Y_a\subset Y$ the subset of points $y$ such that the fiber $X_y$ has rational singularities. 
\end{definition}

\begin{lemma}
If a projective morphism   $a$ is defined over $\mQ$ then the subset $Y_a\subset Y$ is also defined over $\mQ$.
\end{lemma}
\begin{proof}
The proof is by induction in $d=\dim(Y)$.  Suppose that the Lemma is known in the case when the dimension of the base is $<d$. Let $t$ be the generic point of $\bY$ and $ \bX_t$ the fiber of $\bX$ over $t$. Fix a resolution $\ti b: \ti \bX _t\to \bX_t$ over the field $k(t)$ of rational functions on $Y$. Then there exists  a non-empty open subset 
$\bU \subset \bY$ such that $b:=\ti b_{| (a\circ \ti b)^{-1}U}$ is a resolution of $X_U:= a^{-1}(U)$. By definition
 $Y_a\cap U= \{ u\in U| R^ib _\star (\mcO _{\ti X})_u=\{0\}\}$ 
for all $i>0$. Since the sheaves $ R ^i b _\star (\mcO _{\ti X}) $ are coherent 
 we see that the subset  $Y_a\cap U$ is defined over $\mQ$.
On the other hand, the inductive assumption implies that $Y_a\cap (Y\setminus U)$ is  defined over $\mQ$. 
\end{proof} 

Now we can finish a proof of Theorem \ref{r} in the general case. 
Consider the trivial fibration   $\hat a:\bA^n\times \bY \to \bY$ 
where $\bY \subset \mathbf P _d^n $ is the constructible subset of  polynomials of degree $d$ on $\bA ^n$ and of rank $\geq r(d)$. Let 
$\bX \subset \bA ^n\times \bY $ be the hypersurface such that $\hat a^{-1}(P)\cap \bX =\bX _P$ and $a:\bX \to \bY$ be the restriction of $\hat a$ onto $\bX$.
 As follows from Theorem \ref{fibers} and Proposition III$_{\mC}$ of \cite{S} all fibers of $a$ are irreducible and normal. For a proof of Theorem \ref{r} we have to show that $Y_a=Y$. 
The validity of  Theorem \ref{r} in the  case when $ P \in K [x_1, \dots ,x_n] $, where $K/\mQ$ is any finite extension, shows that any point of $Y$ defined over a 
finite extension $K$ of $\mQ$ belongs to $Y_a$. Since the subset  $Y_a$ of $Y$ is is  defined over $\mQ$ we see that $Y_a=Y$. 
\end{proof}




\begin{thebibliography}{99}
\bibitem{ah}  Ananyan, T., Hochster, M.  {\em Small subalgebras of polynomial rings and Stillman's Conjecture}. J. Amer. Math. Soc. 33 (2020), 291-309. 
\bibitem{an}  
Aizenbud, A., Avni, N.{\em Counting points of schemes over finite rings and counting representations of arithmetic lattices.} Duke Math. J. 167 (2018), no. 14, 2721-2743. 
\bibitem{bl} Bhowmick A., Lovett S. {\em Bias vs structure of polynomials in large fields, and applications in effective algebraic geometry and coding theory}. ArXiv:1506.02047
\bibitem{Jan} Bik, A., Draisma, J., Eggermont, R. {\em Polynomials and tensors of bounded strength}.  arXiv 1805.01816
 \bibitem{bh} Browning, T., Heath-Brown, D. {\em Forms in many variables and differing degrees.  } J. Eur. Math. Soc. (JEMS) 19 (2017), no. 2, 357 - 394
 \bibitem{D} Dersken. H. {\em The $G$-stable rank for tensors} {arxiv.org/abs/2002.08435}.
\bibitem{hr} Hrushovski, E  {\em The Elementary Theory of the Frobenius Automorphisms} arXiv: 0406514
\bibitem{janzer} Janzer, O. {\em Polynomial bound for the partition rank vs the analytic rank of tensors} arXiv:1902.11207
\bibitem{kz1} Kazhdan, D., Ziegler, T. {\em Extending linear and quadratic functions from high rank varieties}. arXiv:1712.01335
\bibitem{kz} Kazhdan, D., Ziegler, T. {\em Properties of high rank subvarieties of affine spaces}. To appear GAFA. arXiv:1902.00767.
\bibitem{kz-uniform} Kazhdan, D., Ziegler, T. {\em Polynomials as splines}  Sel. Math. New Ser. (2019) 25; 31.
\bibitem{kz-der} Kazhdan, D., Ziegler, T.  {\em On ranks of polynomials.} Algebr. Represent. Theory 21 (2018), no. 5, 1017–1021
\bibitem{kz-approx} Kazhdan, D., Ziegler, T. {\em Approximate cohomology}.  Sel. Math. New Ser. (2018) 24: 499. 
\bibitem{kz-singular} Kazhdan, D., Ziegler, T. {\em On the codimension of the singular locus}. ArXiv 1907.11750.
\bibitem{LW} Lang, S., Weil A. {\em Number of points of varieties in finite fields.} Amer. J. Math. 76 (1954).
\bibitem{lovett-rank} Lovett, S. {\em The Analytic rank of tensors and its applications}. Discrete Analysis 2019:7
\bibitem{M} Matsumura, H {\em Commutative ring theory}. Cambridge Studies in Advanced Mathematics, 8. Cambridge University Press, Cambridge, 1989.
\bibitem{Ma} Marker, D. {\em Model Theory : An Introduction} Graduate Texts in Mathematics, Vol. 217, 2002
\bibitem{Mi} Mili\'cevi\'c, L.  {\em Polynomial bound for partition rank in terms of analytic rank} Geom. Funct. Anal. 29 (2019), no. 5, 1503-1530. 
\bibitem{S} Schmidt, W.  M. {\em The density of integer points on homogeneous varieties}. Acta Math. 154 (1985), no. 3-4, 243-296. 

\bibitem{tz} Tao,T.; Ziegler, T. {\em The inverse conjecture for the Gowers norm over finite fields in low characteristic}, Ann. Comb. 16 (1) (2012) 121-188.
\bibitem{w} 
Wooley, T. {\em 
Solvable points on smooth projective varieties.} Monatsh. Math. 180 (2016), no. 2, 391–403. 
\end{thebibliography}
\end{document}